\let\pa\partial
\let\na\nabla
\let\eps\varepsilon
\newcommand{\N}{{\mathbb N}}
\newcommand{\R}{{\mathbb R}}
\newcommand{\diver}{\operatorname{div}}
\newcommand{\E}{{\mathbb E}}
\renewcommand{\d}{\mathrm{d}}
\newcommand{\red}{\textcolor{black}}
\newtheorem{theorem}{Theorem}
\newtheorem{lemma}[theorem]{Lemma}
\newtheorem{proposition}[theorem]{Proposition}
\newtheorem{remark}[theorem]{Remark}
\begin{document}  

\title[Population cross-diffusion systems]{Rigorous derivation of \\
population cross-diffusion systems from \\
moderately interacting particle systems}

\author[L. Chen]{Li Chen}
\address{University of Mannheim, School of Business Informatics and Mathematics, 
68131 Mann\-heim, Germany}
\email{chen@math.uni-mannheim.de}

\author[E. S. Daus]{Esther S. Daus}
\address{Institute of Analysis and Scientific Computing, TU Wien, 
Wiedner Hauptstra\ss e 8--10, 1040 Wien, Austria}
\email{esther.daus@tuwien.ac.at} 

\author[A. Holzinger]{Alexandra Holzinger}
\address{Institute of Analysis and Scientific Computing, TU Wien, 
Wiedner Hauptstra\ss e 8--10, 1040 Wien, Austria}
\email{alexandra.holzinger@tuwien.ac.at}

\author[A. J\"ungel]{Ansgar J\"ungel$^*$}
\address{Institute of Analysis and Scientific Computing, TU Wien, 
Wiedner Hauptstra\ss e 8--10, 1040 Wien, Austria {\rm (${}^*$Corresponding author)}}
\email{juengel@tuwien.ac.at} 

\date{\today}

\thanks{The first author acknowledges support from the DFG project CH955/3-1. 
The last three authors acknowledge partial support from   
the Austrian Science Fund (FWF), grants F65, I3401, P30000, P33010, and W1245.} 

\begin{abstract}
Population cross-diffusion systems of Shigesada--Kawasaki--Tera\-moto type 
are derived in a mean-field-type limit from stochastic, moderately interacting
many-particle systems for multiple population species in the whole space.
The diffusion term in the stochastic model depends nonlinearly 
on the interactions between the
individuals, and the drift term is the gradient of the environmental potential.
In the first step, the mean-field limit leads to an intermediate nonlocal model.
The local cross-diffusion system is derived in the second step in a moderate 
scaling regime, when the interaction potentials approach the Dirac delta distribution.
The global existence of strong solutions to the intermediate and the local diffusion
systems is proved for sufficiently small initial data. Furthermore, 
numerical simulations on the particle level are presented.
\end{abstract}

%\paragraph{Keywords:}  
\keywords{Moderately interacting particle systems, stochastic particle systems, 
cross-diffusion system, rigorous derivation, Shigesada--Kawasaki--Teramoto model, 
mean-field limit, population dynamics.}  

%\paragraph{AMS classification:}  
\subjclass[2000]{35Q92, 35K45, 60J70, 82C22}  

\maketitle

%%%%%%%%%%%%%%%%%%%%%%%%%%%%%%%%%%%%%%%%%%%%%%%%%%%%%%%%%%%%%%%%%%%%%%%%%%%%%%%

\section{Introduction}

The aim of this paper is to derive the population cross-diffusion system of
Shigesada, Kawasaki, and Teramoto \cite{SKT79} from a stochastic,
moderately interacting particle system in a mean-field-type limit. 
More precisely, we derive the system of equations
\begin{equation}\label{1.skt}
  \pa_t u_i = \diver(u_i\na U_i) + \Delta\bigg(\sigma_i u_i
	+ u_i\sum_{j=1}^n f(a_{ij}u_j)\bigg),\quad
	u_i(0) = u_{0,i}\quad\mbox{in }\R^d,\ t>0,
\end{equation}
where $i=1,\ldots,n$ is the species index, \red{$d\ge 1$ the space dimension},
$u=(u_1,\ldots,u_n)$ is the vector of population densities, and
$U_i=U_i(x)$ are given environmental potentials. The parameters $\sigma_i>0$
are the constant diffusion coefficients in the stochastic system, and
$a_{ij}\ge 0$ are limiting values of the interaction potentials.
In the linear case $f(s)=s$, we obtain the population 
model in \cite{SKT79}. System \eqref{1.skt} with nonlinear functions $f$
have also been studied in the \red{mathematical} literature; 
see, e.g., \cite{CDJ18,DLMT15,LeMo17}.
\red{Such systems can be formally derived from random walks on a lattice, where
the nonlinearity originates from the transition rates in the random-walk model
\cite[Appendix A]{ZaJu17}. Assuming that the transition rates depend in a nonlinear
way on the densities leads to equations similar to \eqref{1.skt}.}
We assume that $f$ is smooth but possibly {\em not} globally Lipschitz continuous
(including power functions).
Our results are valid for functions $f_i$ depending on the species type, but
we choose the same function for all species to simplify the presentation.

This paper extends the many-particle limit of \cite{CDJ19} leading to the
cross-diffusion system
\begin{equation}\label{1.cdj}
  \pa_i u_i = \diver\bigg(\sigma_i\na u_i + \sum_{j=1}^n a_{ij}u_i\na u_j\bigg)
	\quad\mbox{in }\R^d,\ t>0,\ i=1,\ldots,n,
\end{equation}
which differs from \eqref{1.skt} by the drift term, the nonlinear function $f$,
and the diffusion term $\diver\sum_{j=1}^n a_{ij}u_j\na u_i$. System \eqref{1.cdj}
is the mean-field limit of the particle system for $N$ individuals
\begin{equation}\label{1.cdj2}
\begin{aligned}
  & \d Y_{k,i}^{N,\eta} = -\sum_{j=1}^n\frac{1}{N}\sum_{\ell=1}^N
	\na B_{ij}^\eta\big(Y_{k,i}^{N,\eta}-Y_{\ell,j}^{N,\eta}\big)\d t
	+ \sqrt{2\sigma_i}\d W_i^k(t), \\ 
	& Y_{k,i}^{N,\eta}(0)=\xi_i^k, \quad i=1,\ldots,n,\ k=1,\ldots,N,
\end{aligned}
\end{equation}
where $(W_i^k(t))_{t\ge 0}$ are 
$d$-dimensional Brownian motions and $\xi_i^1,\ldots,\xi_i^N$ are independent 
and identically distributed (iid) random variables with the common probability 
density function $u_{0,i}$. The functions 
\begin{equation}\label{1.scaling.B}
  B_{ij}^\eta(x) = \eta^{-d}B_{ij}\bigg(\frac{|x|}{\eta}\bigg), \quad x\in\R^d,
\end{equation}
are interaction
potentials regularizing the delta distribution $\delta_0$, i.e.\
$B_{ij}^\eta\to a_{ij}\delta_0$ as $\eta\to 0$ in the sense of distributions.

System \eqref{1.skt} is derived from an interacting particle system for
$n$ species with particle numbers $N_1,\ldots,N_n$, 
moving in the whole space $\R^d$. To simplify, we set $N=N_i$ for all $i=1,\ldots,n$.
The key idea of this paper is to consider interacting diffusion coefficients:
\begin{equation}\label{1.pl}
\begin{aligned}
  & \d X_{k,i}^{N,\eta} = -\na U_i(X_{k,i}^{N,\eta})\d t
	+ \bigg(2\sigma_i + 2\sum_{j=1}^nf_\eta\bigg(\frac{1}{N}\!\!\!\!
	\sum_{\substack{\ell=1\\ (\ell,j)\neq (k,i)}}^N\!\!\!\!
	B_{ij}^\eta(X_{k,i}^{N,\eta}-X_{\ell,j}^{N,\eta})\bigg)\bigg)^{1/2}\d W_i^k(t), \\
  & X_{k,i}^{N,\eta}(0) = \xi_i^k, \quad i=1,\ldots,n,\ k=1,\ldots,N,
\end{aligned}
\end{equation}
where $f_\eta$ is a globally Lipschitz continuous approximation of $f$ with a
Lipschitz constant smaller or equal than $\eta^{-\alpha}$ for some small $\alpha>0$. 
In view of \eqref{1.scaling.B}, we can interpret the scaling parameter $\eta$ as the 
interaction radius of each particle. 

Equations \eqref{1.skt} are derived from system \eqref{1.pl} in the 
limit $N\to\infty$, $\eta\to 0$, with the scaling relation between $\eta$ 
and $N$ given in \eqref{1.scaling.deriv} below.
First, for fixed $\eta>0$, we perform a classical mean-field limit from 
\eqref{1.pl} to the following auxiliary {\rm intermediate} system:
\begin{equation}\label{1.il}
\begin{aligned}
  & \d \overline{X}_{k,i}^{\eta} = -\na U_i(\overline{X}_{k,i}^{\eta})\d t
	+ \bigg(2\sigma_i + 2\sum_{j=1}^nf_\eta\big(B_{ij}^\eta*
	u_{\eta,j}(\overline{X}_{k,i}^\eta)\big)\bigg)^{1/2}\d W_i^k(t), \\
	& \overline{X}_{k,i}^\eta(0) = \xi_i^k, \quad i=1,\ldots,n,\ k=1,\ldots,N,
\end{aligned}
\end{equation}
where we set $u_{\eta,j}(\overline{X}_{k,i}^\eta)=u_{\eta,j}
(t,\overline{X}_{k,i}^\eta(t))$ for $j=1,\ldots,n$. The function $u_{\eta,j}$ 
satisfies the nonlocal cross-diffusion system
\begin{equation}\label{1.nonloc}
\begin{aligned}
  & \pa_t u_{\eta,i} = \diver(u_{\eta,i}\na U_i) 
	+ \Delta\bigg(\sigma_i u_{\eta,i} + u_{\eta,i}\sum_{j=1}^n 
	f_\eta(B_{ij}^\eta * u_{\eta,j})\bigg), \\
	&	u_{\eta,i}(0) = u_i^0\ \mbox{in }\R^d,\quad i=1,\ldots,n,
\end{aligned}
\end{equation}
and will be later identified as the probability density function of 
$\overline{X}_{k,i}^{\eta}$. Note that we consider $N$ independent copies 
$\overline{X}_{k,i}^\eta$, $k=1,\ldots,N$, and the intermediate system depends on
$k$ only through the initial datum.

Then, passing to the limit $N\to\infty$, $\eta\to 0$ in \eqref{1.pl} leads to the 
{\em macroscopic system} 
\begin{equation}\label{1.ml}
\begin{aligned}
  & \d \widehat{X}_{k,i} = -\na U_i(\widehat{X}_{k,i})\d t
	+ \bigg(2\sigma_i + 2\sum_{j=1}^n f(a_{ij}u_{j}(\widehat{X}_{k,i}))\bigg)^{1/2}
	\d W_i^k(t), \\
	& \widehat{X}_{k,i}^\eta(0) = \xi_i^k, \quad i=1,\ldots,n,\ k=1,\ldots,N,
\end{aligned}
\end{equation}
where the functions $u_i$ satisfy \eqref{1.skt} and can be identified as 
the probability density functions of $\widehat{X}_{k,i}$.
In this limit, we assume that there exists $\delta>0$, depending on $n$, 
$\min_i\sigma_i$, and $T$, such that
\begin{equation}\label{1.scaling.deriv}
  \eta^{-2(d+1+\alpha)}\le\delta\log N
\end{equation}
holds, where $\alpha\geq 0$ depends on the Lipschitz condition of $f$, see 
Assumption $(A4)$ below, and that the function $f$ and its dervatives or, 
alternatively the initial
data, are sufficiently small (see Section \ref{sec.main} for details).
The main result of the paper is the error estimate
\begin{equation}\label{1.error}
  \sup_{k=1,\ldots,N}\E\bigg(\sum_{i=1}^n\sup_{0<s<T}\big|X_{k,i}^{N,\eta}(s)
	- \widehat{X}_{k,i}(s)\big|^2\bigg)\le C(T)\eta^{2(1-\alpha)}.
\end{equation}
We prove this estimate for the
potential $U_i(x)=-\frac12|x|^2$, but more general functions are possible;
see Remark \ref{rem.disc}. Note that estimate \eqref{1.error} implies 
propagation of chaos; see Remark \ref{prop.cha}.
In the case $\alpha=0$, our scaling \eqref{1.scaling.deriv} for the multi-species
case recovers the result in \cite{JoMe98}, where a single-species, moderately
interacting particle system with interaction in the diffusion part was considered.
\red{Our strategy is similar to that one of \cite{JoMe98} (and based on ideas
of Oelschl\"ager \cite{Oel89}). Since we allow for locally Lipschitz continuous
nonlinearities only, we obtain a smaller convergence rate compared to \cite{JoMe98},
which in fact is natural, since we approximate the nonlinearity
with functions having a Lipschitz constant of order
$\eta^{-\alpha}$. A difference to \cite{JoMe98} is that the authors
assume that the diffusion matrix in the stochastic part is positive definite. 
We do not suppose such a condition, but we need a smallness condition on the
nonlinearity for the existence proofs of systems \eqref{1.skt} and \eqref{1.nonloc}.}

Next, we present a brief overview on the existing literature concerning 
mean-field limits and moderately interacting many-particle limits in the context
of diffusion equations. Mean-field limits from stochastic differential equations have
been investigated since the 1980s; see the reviews \cite{Gol03,JaWa17} 
and the classical works by Sznitman \cite{Szn84,Szn91}. Oelschl\"ager
proved that in the many-particle limit, weakly interacting stochastic particle systems
converge to a deterministic nonlinear process \cite{Oel84}. Later, he generalized
his approach for systems of reaction-diffusion equations \cite{Oel89}
and porous-medium-type equations with quadratic diffusion \cite{Oel90},
by using moderately interacting particle systems. We also refer to the 
recent work \cite{CGK18}, which also includes numerical simulations.
As already mentioned, moderate interactions in stochastic particle system 
with nonlinear diffusion coefficients were investigated for the first time in 
\cite{JoMe98}. Later, Stevens derived the chemotaxis model from a 
many-particle system \cite{Ste00}. 
Further works concern the mean-field limit
leading to reaction-diffusion equations with nonlocal terms \cite{IRS12},
the hydrodynamic limit in a two-component system of Brownian motions to the
cross-diffusion Maxwell--Stefan equations \cite{Seo18}, 
and the large population limit of
point measure-valued Markov processes to nonlocal Lotka--Volterra systems with
cross diffusion \cite{FoMe15}. The latter model is similar to the nonlocal
system \eqref{1.nonloc}. The limit from the nonlocal to the local 
diffusion system was shown in \cite{Mou20} but only for triangular diffusion matrices.
The many-particle limit from a particle system driven by L\'evy noise 
to a fractional cross-diffusion system related to \eqref{1.cdj}
was recently shown in \cite{DPR20}. Furthermore, the population system \eqref{1.skt}
was derived in \cite{DDD19} from a time-continuous Markov chain model using the
BBGKY hierarchy. This paper presents, up to our knowledge, the first rigorous 
derivation of the Shigesada--Kawasaki--Teramoto (SKT) model \eqref{1.skt} from a 
stochastic particle system in the moderate many-particle limit.

Porous-medium-type equations can be derived from stochastic interacting particle
systems by assuming interactions in the drift term \cite{FiPh08}
or in the diffusion term \cite{JoMe98}. We allow for interactions in the
diffusion part but in a multi-species setting. The paper \cite{FoMe15} is concerned
with a multi-species framework too, but the authors assume bounded Lipschitz
continuous interaction potentials and derive a nonlocal cross-diffusion system only.
We are able to relax the assumptions and derive the local cross-diffusion system
\eqref{1.skt}.

Compared to the work \cite{DDD19}, we take the limits $N\to\infty$, $\eta\to 0$
simultaneously. However, our approach also implies the two-step limit.
Indeed, we can first perform the limit $N\to\infty$ for fixed $\eta>0$ and
afterwards the limit $\eta\to 0$ on the PDE level; see Lemma \ref{lem.conv1}
and Theorem \ref{thm.loc}. The simultaneous limit $N\to\infty$, $\eta\to 0$,
satisfying the scaling relation \eqref{1.scaling.deriv}, gives a more complete
picture, since we can prove the convergence in expectation for the difference
of the solutions to the stochastic systems \eqref{1.pl} and \eqref{1.ml}.

Finally, we remark that the cross-diffusion models \eqref{1.skt} and \eqref{1.cdj}
have quite different structural properties; also see \cite{BCPS20, BPRS20}. 
First, system \eqref{1.cdj} has a 
formal gradient-flow structure for each species separately, while system \eqref{1.skt}
can be written, under the detailed-balance condition \cite{CDJ19}, only in a
vector-valued gradient-flow form. Second, the segregation behavior of both models 
is different, i.e., segregation is stronger for the solutions to \eqref{1.cdj}
than for model \eqref{1.skt}; see the numerical experiments in Section \ref{sec.num}.

The paper is organized as follows. We present our assumptions and main results 
in Section \ref{sec.main}. The existence of smooth solutions to the
cross-diffusion systems \eqref{1.skt} and \eqref{1.nonloc} and an error 
estimate for the difference of the corresponding solutions is proved in
Sections \ref{sec.nonloc} and \ref{sec.loc}, respectively. 
The proofs are based on Banach's fixed-point theorem and 
higher-order estimations. We present the full proof since the environmental 
potential $U_i(x)=-\frac12|x|^2$ is not square-integrable, which requires some care;
see the arguments following \eqref{2.I7}.
Section \ref{sec.link} is concerned with the
identification of the solutions to the local and
nonlocal cross-diffusion systems \eqref{1.skt} and \eqref{1.nonloc}, respectively, 
with the probability density functions associated to the particle systems
\eqref{1.ml} and \eqref{1.il}, respectively. 
Error estimate \eqref{1.error}, the main result of the paper,
is proved in Section \ref{sec.conv}. 
In Section \ref{sec.num}, we present Monte--Carlo simulations for an
Euler--Maruyama discretization of system \eqref{1.pl} and compare them to
the numerical results from the particle system associated to \eqref{1.cdj}.
In the appendix, we recall some inequalities used in the paper.

%%%%%%%%%%%%%%%%%%%%%%%%%%%%%%%%%%%%%%%%%%%%%%%%%%%%%%%%%%%%%%%%%%%%%%%%%%%%%

\section{Assumptions and main results}\label{sec.main}

We impose the following assumptions:

\begin{enumerate}[({A}1)]
\item \textbf{Data:} $\sigma_i\in(0,\infty)$ and $\xi_i^1,\ldots,\xi_i^N$ are independent and
identically distributed (iid) square-integrable random variables 
with the common density function 
$u_{0,i}$ for $i=1,\ldots,n$ on the probability space $(\Omega,\mathcal{F},P)$. 
\item \textbf{Environmental potential:} $U_i(x)=-\frac12|x|^2$, $i=1,\ldots,n$.
\item \textbf{Interaction potential:} $B_{ij}\in C_0^\infty(\R^d)$ satisfies
$\operatorname{supp}(B_{ij})\subset B_1(0)$, where $B_1(0)$ is the unit ball in $\R^d$
and $i,j=1,\ldots,n$. 
\item \textbf{Nonlinearity:} $f\in W_{\rm loc}^{s+1,\infty}(\R;[0,\infty))$
and $f_\eta\in W^{s+1,\infty}(\R,[0,\infty))$ is such that
$f_\eta=f$ on $[-a_\eta,a_\eta]$ and the Lipschitz constant of $f_\eta$ is 
less than or equal to $\eta^{-\alpha}$ for a fixed $\alpha\in[0,1)$. 
Here, $s>d/2+1$ and $a_\eta\to\infty$ as $\eta\to 0$. If $f$ is globally 
Lipschitz continuous, we set $\alpha = 0$ and $f_\eta = f$.
\end{enumerate}

\begin{remark}[Discussion]\rm\label{rem.disc}
\textit{Environmental potential:}
The sign of $U_i$ guarantees that the populations are dispersed since the
drift term becomes $-x\cdot\na u_i-u_i$.
We have taken a quadratic potential $U_i$ to simplify the presentation.
\red{``Dispersive'' potentials (i.e.\ potentials $U_i$ with $\Delta U_i\le 0$)
are needed in the analysis, since we cannot bound terms including $\Delta U_i$
if $\Delta U_i\ge 0$.}
It is possible to choose general (dispersive) 
potentials $U_i\in C^\infty(\R^d)$ such that
$\na U_i$ is globally Lipschitz continuous, 
$D^kU_i\in L^\infty(\R^d)$ for $k=2,\ldots,s+2$,
the Hessian $D^2U_i$ is negative semidefinite, $\Delta U_i<0$, and
$D^kU_i$ for $k=3,\ldots,s$ is sufficiently small in the $L^\infty(\R^d)$ norm.
Thus, we may choose $U_i(x)=-|x|^2+g(x)$ and $g$ is a smooth
perturbation. 

\textit{Nonlinearity:} Since $f$ is not assumed to be globally Lipschitz continuous, 
we need to approximate the nonlinearity. The condition on the Lipschitz constant 
of $f_\eta$ ensures that we have a control on the growth of the Lipschitz constant 
of $f_\eta$ in the limit $N\to\infty$ and $\eta\to 0$. This growth condition is 
needed in the proof of Lemma \ref{lem.conv1}; see \eqref{4.aux2} and thereafter. 
The condition $s>d/2+1$ ensures that the embedding $H^s(\R^d)\hookrightarrow
W^{1,\infty}(\R^d)$ is continuous, and this embedding is needed to obtain
solutions in $H^s(\R^d)$ and to derive the estimates.
\qed
\end{remark}

We introduce some notation. We set 
$$
  a_{ij} = \int_{\R^d}B_{ij}(|x|)\d x, \quad i,j=1,\ldots,n,
$$
$B_{ij}^\eta(x)=\eta^{-d}B_{ij}(|x|/\eta)$, $A_{ij}=\|B_{ij}\|_{L^1(\R^d)}
=\|B_{ij}^\eta\|_{L^1(\R^d)}$ and $A=\max_{i,j=1,\ldots,n}A_{ij}$.
Let $C_s>0$ be the constant of the continuous
embedding $H^s(\R^d)\hookrightarrow L^\infty(\R^d)$ and set
\begin{equation}\label{def.I}
  I = [-2AC_s\|u_0\|_{H^s(\R^d)},2AC_s\|u_0\|_{H^s(\R^d)}].
\end{equation}
Then, for small $\eta>0$ such that $a_\eta\ge 2AC_s\|u_0\|_{H^s(\R^d)}$, 
we have $f_\eta=f$ on $I$.

First, we ensure that the nonlocal and local cross-diffusion systems 
\eqref{1.nonloc} and \eqref{1.skt}, respectively, have global smooth solutions.

\begin{theorem}[Existence for the nonlocal system]\label{thm.nonloc}
Let Assumptions (A2) and (A4) hold, 
$u_0\in H^s(\R^d;\R^n)$ for $s>d/2+1$, and let $\eta>0$ be such that
$a_\eta\ge 2AC_s\|u_0\|_{H^s(\R^d)}$.
There exists $\eps>0$ depending on $u_0$ such that if $\|f\|_{C^{s+1}(I)}\le\eps$,
system \eqref{1.nonloc} possesses a unique solution 
$u_\eta=(u_{\eta,1},\ldots,u_{\eta,n})$ satisfying
\begin{align*}
  & u_{\eta,i}\in L^\infty(0,\infty;H^s(\R^d))\cap 
	L^2(0,\infty;H^{s+1}(\R^d)), \\
  & \|u_\eta\|_{L^\infty(0,T;H^s(\R^d))}^2 
	+ \sigma_*\|\na u_\eta\|_{L^2(0,\infty;H^{s}(\R^d))}^2 
	\le \|u_0\|_{H^s(\R^d)}^2,
\end{align*}
where $0<\sigma_*<\sigma_{\rm min}:=\min_{i=1,\ldots,n}\sigma_i$.
\end{theorem}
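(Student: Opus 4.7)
The plan is a Banach fixed-point argument on the closed ball
$$ X_T^R := \bigl\{v \in L^\infty(0,T;H^s(\R^d;\R^n)) \cap L^2(0,T;H^{s+1}(\R^d;\R^n)) : \|v\|_{L^\infty(0,T;H^s)} \le R\bigr\} $$
of radius $R := \|u_0\|_{H^s}$, the key feature being that all constants below are independent of $T$, so that $T$ can eventually be sent to infinity. For $v \in X_T^R$, define $u = \Phi(v)$ as the solution of the \emph{linear} parabolic system
$$ \pa_t u_i = \diver(u_i\na U_i) + \Delta\bigl((\sigma_i + h_i[v])u_i\bigr), \quad u_i(0) = u_{0,i}, $$
with $h_i[v] = \sum_{j=1}^n f_\eta(B_{ij}^\eta * v_j)$; existence of $\Phi(v)$ on $[0,T]$ follows from a Galerkin approximation together with the a priori estimate below. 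Since $\|B_{ij}^\eta\|_{L^1}\le A$ and the Sobolev embedding yields $\|B_{ij}^\eta*v_j\|_{L^\infty}\le AC_s R$, the argument of $f_\eta$ stays inside the interval $I$ of \eqref{def.I}, where $f_\eta\equiv f$ and the smallness hypothesis $\|f\|_{C^{s+1}(I)}\le \eps$ controls all derivatives up to order $s+1$ by $\eps$.

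The heart of the proof is an $H^s$ energy estimate on $u$: apply $\pa^\beta$ with $|\beta|\le s$ to the $i$-th equation, multiply by $\pa^\beta u_i$, integrate over $\R^d$, and sum over $i$ and $\beta$. The diffusion contribution produces $-\sigma_i\|\na\pa^\beta u_i\|_{L^2}^2$ plus Moser-type commutators $\int \na\pa^\beta u_i \cdot \na\pa^\beta(h_i[v]\,u_i)$; after splitting $h_i = f_\eta(0) + (h_i - f_\eta(0))$---the constant part merely augments $\sigma_i$ by the nonnegative amount $f_\eta(0)\le \eps$, while the remainder belongs to $H^s$---the Moser tame product and composition estimates yield
$$ \|h_i[v] - f_\eta(0)\|_{H^{s+1}} \le C(R,\eta)\,\|f\|_{C^{s+1}(I)} \le C(R,\eta)\,\eps. $$
Hence for $\eps$ sufficiently small depending on $u_0$, these contributions are absorbed into the coercivity, leaving an effective dissipation of at least $2\sigma_*\|\na u\|_{H^s}^2$ for any prescribed $\sigma_* < \sigma_{\rm min}$.

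The principal obstacle, as the authors emphasize, is the non-square-integrable drift $u_i\na U_i = -xu_i$. The key observation is that $\pa^\gamma x_k = \delta_{\gamma,e_k}$ vanishes for $|\gamma|\ge 2$, so the Leibniz expansion collapses exactly to
$$ \pa^\beta\diver(u_i\na U_i) = -(d+|\beta|)\pa^\beta u_i - x\cdot\na\pa^\beta u_i, $$
with no dangerous $x$-weighted higher-order commutators. Testing against $\pa^\beta u_i$ and integrating the transport piece by parts against a spatial cut-off $\chi_R$, then letting $R\to\infty$ (the annular contribution of $\na\chi_R$ against $(\pa^\beta u_i)^2$ tends to zero by dominated convergence since $\pa^\beta u_i\in L^2$), produces the \emph{nonpositive} contribution $-(\tfrac{d}{2}+|\beta|)\|\pa^\beta u_i\|_{L^2}^2$. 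Combining with the diffusion estimate yields
$$ \frac{d}{dt}\|u\|_{H^s}^2 + 2\sigma_*\|\na u\|_{H^s}^2 \le 0, $$
which integrates to the stated a priori bound and to $\Phi(X_T^R)\subset X_T^R$.

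For contraction, the difference $w := \Phi(v) - \Phi(\tilde v)$ solves a linear parabolic equation with source governed by $h_i[v] - h_i[\tilde v]$; using the Lipschitz bound $|f_\eta'|\le \eta^{-\alpha}$ (harmless here since $\eta$ is fixed throughout this theorem) together with Young's convolution inequality, this source is controlled by $\|v - \tilde v\|_{L^\infty(0,T;L^2)}$. A parallel energy estimate then produces a Lipschitz constant for $\Phi$ of order $C(\eta,R)T^{1/2}$, strictly less than $1$ on short intervals; Banach's theorem yields a unique local solution, which extends to a unique global solution via the $T$-independent a priori bound established above.
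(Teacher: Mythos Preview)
Your overall architecture (fixed-point on an $H^s$ ball, quadratic-potential commutator giving a good sign, Moser/tame estimates for the nonlinear diffusion) matches the paper, and your drift computation is correct. But there is a genuine gap in the self-mapping step, precisely at the point where you write
\[
  \|h_i[v]-nf_\eta(0)\|_{H^{s+1}}\le C(R,\eta)\,\eps
\]
and then conclude ``for $\eps$ sufficiently small depending on $u_0$''. The constant you obtain is \emph{not} independent of $\eta$: since $v\in X_T^R$ only guarantees $v(t)\in H^s$, the $(s{+}1)$-st derivative of $f_\eta(B_{ij}^\eta*v_j)$ forces one derivative onto $B_{ij}^\eta$, and $\|\na B_{ij}^\eta\|_{L^1}\sim\eta^{-1}$. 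Hence your smallness threshold for $\eps$ depends on $\eta$, contradicting the theorem (which requires $\eps=\eps(u_0)$ only; this $\eta$-uniformity is exactly what is exploited in Theorem~\ref{thm.loc} when sending $\eta\to0$). If instead you try the $\eta$-uniform route and let that extra derivative hit $v_j$, you pick up $\|\na v\|_{H^s}$ on the right-hand side; for a \emph{generic} $v\in X_T^R$ this quantity is not controlled pointwise in time, and after integration you are left with a strictly positive term $\eps^2C(R)\int_0^T\|\na v\|_{H^s}^2$, so $\Phi$ cannot map the ball of the minimal radius $R=\|u_0\|_{H^s}$ into itself.

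The paper circumvents this obstruction by separating the argument into two stages. In Step~1 it runs a Banach fixed-point on the larger ball of radius $2\|u_0\|_{H^s}$, accepting freely $\eta$-dependent constants (hence an $\eta$-dependent short time $T$) and \emph{no} smallness of $f$; this yields a local solution. In Step~2 it derives the a~priori estimate directly for the \emph{actual} solution $u$, i.e.\ with $v=u$: then the dangerous term $\|D^m\na K_i(u)\|_{L^2}$ is bounded, $\eta$-uniformly, by $\eps\,C(\|u_0\|_{H^s})\,\|\na u\|_{H^s}$, and is absorbed by the dissipation $\sigma_i\|\na D^\alpha u_i\|_{L^2}^2$ for $\eps$ small depending only on $u_0$. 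This produces $\|u(\tau)\|_{H^s}\le\|u_0\|_{H^s}$ on the local interval, and global existence follows by continuation. Your one-shot scheme can be repaired along these lines, but as written it does not close with an $\eta$-independent $\eps$.
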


The dependence of $\eps$ on $u_0$ can be made more explicit. The proof shows that
we need to choose $0<\eps < C\sigma_{\rm min}^{1/2}\|u_0\|^{-s}_{H^s(\R^d)}$, 
where $C>0$ is independent of $u_0$ and $\sigma_i$. 
Thus, if $\|f\|_{C^{s+1}(I)}$ is finite, the global existence result is valid for
small initial data.

\begin{theorem}[Existence for the local system]\label{thm.loc}
Let $u_0$ and $\eta$ satisfy the assumptions of Theorem \ref{thm.nonloc}.
Then there exists $\eps>0$ depending on $u_0$ such that if 
$\|f\|_{C^{s+1}(I)}\le\eps$,
system \eqref{1.skt} possesses a unique solution $u=(u_1,\ldots,u_n)$ satisfying
\begin{align*}
  & u_i\in L^\infty(0,\infty;H^s(\R^d))\cap L^2(0,\infty;H^{s+1}(\R^d)), 
	\quad i=1,\ldots,n, \\
  & \|u\|_{L^\infty(0,\infty;H^s(\R^d))}^2 
	+ \sigma_*\|\na u\|_{L^2(0,\infty;H^s(\R^d))}^2
	\le \|u_0\|_{H^s(\R^d)}^2,
\end{align*}
where $0<\sigma_*<\sigma_{\rm min}$.
Moreover, with the solution $u_\eta$ from Theorem \ref{thm.nonloc}, 
it holds that for an arbitrary $T>0$,
$$
  \|u-u_\eta\|_{L^\infty(0,T;L^2(\R^d))} + \|\na(u-u_\eta)\|_{L^2(0,T;L^2(\R^d))}
	\le C(T)\eta.
$$
\end{theorem}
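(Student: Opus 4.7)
The plan is to split the proof into two parts: existence of a global smooth solution $u$ to the local system \eqref{1.skt}, followed by the quantitative error estimate. For existence, I would repeat verbatim the Banach fixed-point scheme used to establish Theorem \ref{thm.nonloc}, replacing the operator $v\mapsto B_{ij}^\eta*v$ by the pointwise multiplication $v\mapsto a_{ij}v$. Both operators map $H^s(\R^d)$ into itself with norm bounded by $A=\max_{i,j}A_{ij}$, so every energy estimate carried out in Section \ref{sec.nonloc} (commutator bounds, absorption of the cross-diffusion contribution via $\|f\|_{C^{s+1}(I)}\le\eps$, etc.) transfers line by line to the local problem, and in fact becomes simpler because $f(a_{ij}u_j)$ is a pointwise function of $u_j$. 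This yields a unique solution $u$ satisfying the asserted $H^s$-energy inequality with the same $\sigma_*$ as in Theorem \ref{thm.nonloc}.

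For the error estimate, set $w_i=u_i-u_{\eta,i}$. Subtracting \eqref{1.nonloc} from \eqref{1.skt} gives
\begin{equation*}
  \pa_t w_i = \diver(w_i\na U_i) + \sigma_i\Delta w_i + \Delta R_i,\qquad R_i=\sum_{j=1}^n\bigl[u_i f(a_{ij}u_j)-u_{\eta,i}f_\eta(B_{ij}^\eta* u_{\eta,j})\bigr].
\end{equation*}
The natural decomposition is
\begin{equation*}
  R_i=\sum_{j=1}^n\Bigl(w_i f(a_{ij}u_j)+u_{\eta,i}[f(a_{ij}u_j)-f(a_{ij}u_{\eta,j})]+u_{\eta,i}[f(a_{ij}u_{\eta,j})-f(B_{ij}^\eta*u_{\eta,j})]\Bigr),
\end{equation*}
where $f_\eta$ has been replaced by $f$ in the last bracket because, by Theorem \ref{thm.nonloc} and the choice of $a_\eta$, both $a_{ij}u_{\eta,j}$ and $B_{ij}^\eta*u_{\eta,j}$ take values in the interval $I$ of \eqref{def.I}. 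The first and second brackets are pointwise Lipschitz in $w_j$; the third is handled by the standard convolution estimate
\begin{equation*}
  \|a_{ij}v-B_{ij}^\eta*v\|_{L^2(\R^d)}\le C\eta\|\na v\|_{L^2(\R^d)},
\end{equation*}
which follows from $\operatorname{supp}(B_{ij})\subset B_1(0)$ and a first-order Taylor expansion, with an analogous inequality holding for one additional derivative.

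The energy estimate proceeds by testing the equation for $w_i$ with $w_i$ and summing over $i$. The drift contributes
\begin{equation*}
  \int_{\R^d} w_i\diver(w_i\na U_i)\d x=\tfrac12\int_{\R^d} w_i^2\Delta U_i\d x=-\tfrac{d}{2}\|w_i\|_{L^2}^2\le 0,
\end{equation*}
the linear diffusion yields $-\sigma_i\|\na w_i\|_{L^2}^2$, and one integration by parts turns $\int w_i\Delta R_i\d x$ into $-\int\na w_i\cdot\na R_i\d x$. The contribution of $w_i f(a_{ij}u_j)$ is absorbed into $\sigma_i\|\na w_i\|_{L^2}^2$ by Young's inequality together with the smallness of $\|f\|_{C^1(I)}\|u\|_{H^s}$; the Lipschitz term feeds a Grönwall quantity in $\|w\|_{L^2}^2$; and the convolution error is controlled, using the $L^2(0,T;H^{s+1})$ bound on $u_\eta$ from Theorem \ref{thm.nonloc}, by $C\eta(\|w_i\|_{L^2}+\|\na w_i\|_{L^2})(1+\|u_\eta\|_{H^{s+1}})$. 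Applying Grönwall's lemma to $\Phi(t)=\sum_i\|w_i(t)\|_{L^2}^2+\sigma_*\int_0^t\sum_i\|\na w_i(s)\|_{L^2}^2\d s$ then gives $\Phi(T)\le C(T)\eta^2$, which is the claimed bound. I expect the main technical obstacle to be the nonintegrable potential $U_i(x)=-\tfrac12|x|^2$: justifying the integration by parts in the drift term on $\R^d$ requires a cutoff approximation, precisely the point already addressed in Section \ref{sec.nonloc} following \eqref{2.I7}, and that argument transfers here unchanged.
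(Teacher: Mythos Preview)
Your proposal is correct, and the error-estimate part is essentially the paper's argument with a symmetric choice of decomposition: the paper writes the cross-diffusion difference as $(u_{\eta,i}-u_i)f_\eta(B_{ij}^\eta*u_{\eta,j})+u_i[f_\eta(B_{ij}^\eta*u_{\eta,j})-f_\eta(a_{ij}u_{\eta,j})]+u_i[f_\eta(a_{ij}u_{\eta,j})-f(a_{ij}u_j)]$, whereas you factor out $u_{\eta,i}$ instead of $u_i$ and anchor on $f(a_{ij}u_j)$ instead of $f_\eta(B_{ij}^\eta*u_{\eta,j})$. The key convolution estimate and the Gronwall step are identical.

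The genuine difference is in Step~1. You prove existence for \eqref{1.skt} by rerunning the Banach fixed-point argument of Section~\ref{sec.nonloc} with $B_{ij}^\eta*$ replaced by $a_{ij}\cdot$; the paper instead obtains $u$ as the weak limit of $u_\eta$ as $\eta\to 0$, using the uniform $H^s$ bounds from Theorem~\ref{thm.nonloc}, an Aubin--Lions compactness argument on balls $B_R$ to upgrade to strong $L^2$ convergence, and then passing to the limit in the weak formulation. Your route is more self-contained and avoids compactness entirely; the paper's route has the advantage that existence of $u$ and convergence $u_\eta\to u$ come in one package (and uniqueness is only needed to identify the full sequence). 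One caveat for your version: the contraction step in Section~\ref{sec.nonloc} invokes global Lipschitz continuity of $f_\eta$ and $f_\eta'$, which $f$ itself need not have; you should note explicitly that on the fixed-point set $X_T$ one has $a_{ij}v_j\in I$, so the local Lipschitz constants of $f$ and $f'$ on $I$ suffice. Also, the issue you flag about $U_i\notin L^2$ is really only delicate at the higher-order level (this is what follows \eqref{2.I7}); at the $L^2$ level used in the error estimate, the drift term integrates to $-\tfrac{d}{2}\|w_i\|_{L^2}^2\le 0$ without complication, exactly as the paper records in \eqref{3.aux0}.
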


Next, we state an existence result for the stochastic particle systems 
\eqref{1.pl}, \eqref{1.il}, and \eqref{1.ml}. 

\begin{proposition}\label{prop.ex}
\red{Let Assumptions (A1)--(A4) hold and let $\eta>0$, $N\in\N$}. Then:

{\em (i)} There exist unique square-integrable adapted
stochastic processes with continuous paths, which are strong solutions to systems 
\eqref{1.pl}, \eqref{1.il}, and \eqref{1.ml}, respectively.

{\rm (ii)} For each $t>0$, the $(nNd)$-dimensional random variables 
$\overline{X}^\eta(t)$ and $\widehat{X}(t)$ possess density
functions $\overline{u}_\eta(t)^{\otimes N}$ and $\widehat{u}(t)^{\otimes N}$ with
respect to the Lebesgue measure on $\R^{nNd}$, respectively.
\end{proposition}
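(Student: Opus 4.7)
The plan is to treat parts (i) and (ii) separately, reducing (i) to classical It\^o theory once the PDE coefficient fields are in hand, and (ii) to a Fokker--Planck identification after exploiting the built-in independence of \eqref{1.il} and \eqref{1.ml}.

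\textit{Existence for \eqref{1.pl}.} First I would observe that, under Assumption (A2), the drift $-\na U_i(x)=x$ is globally Lipschitz in $X\in\R^{nNd}$. The empirical average $N^{-1}\sum_\ell B_{ij}^\eta(X_{k,i}-X_{\ell,j})$ is bounded and Lipschitz in $X$ because $B_{ij}^\eta\in C_c^\infty(\R^d)$, and composing with the globally Lipschitz $f_\eta$ from (A4) preserves these properties. Adding $2\sigma_i\ge 2\sigma_{\rm min}>0$ keeps the result bounded below, so the square root produces a globally Lipschitz diffusion matrix. The classical strong-existence and -uniqueness theorem for SDEs with globally Lipschitz coefficients then delivers continuous, adapted, square-integrable solutions (square-integrability coming from a standard Gronwall estimate on $\E|X_{k,i}^{N,\eta}|^2$).

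\textit{Existence for \eqref{1.il} and \eqref{1.ml}.} Here I would invoke Theorems \ref{thm.nonloc} and \ref{thm.loc} to fix the deterministic, smooth coefficient fields $u_\eta$ and $u$. Since $s>d/2+1$, the embedding $H^s(\R^d)\hookrightarrow W^{1,\infty}(\R^d)$ makes $u_\eta$ and $u$ bounded and Lipschitz in $x$ uniformly in $t\in[0,T]$, so $B_{ij}^\eta*u_{\eta,j}$ (resp.\ $a_{ij}u_j$) is Lipschitz in $x$, and composing with $f_\eta$ (resp.\ with $f$ on its bounded range) yields globally Lipschitz, uniformly elliptic SDE coefficients. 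Classical theory again produces unique strong solutions. Crucially, because the coefficient fields are deterministic, the SDEs for different indices $(k,i)$ decouple completely, and the mutual independence of the initial data $\xi_i^k$ and of the Brownian motions $W_i^k$ then propagates to the solutions: $\{\overline{X}_{k,i}^\eta\}_{k,i}$ and $\{\widehat{X}_{k,i}\}_{k,i}$ are mutually independent families.

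\textit{Identification of the density.} By the independence just established, the joint law of $\overline{X}^\eta(t)$ factorizes over $(k,i)$, so it suffices to identify the marginal law of a single $\overline{X}_{k,i}^\eta(t)$ with $u_{\eta,i}(t)\,\d x$. Applying It\^o's formula to $\varphi\in C_c^\infty(\R^d)$ and taking expectations shows that this marginal is a distributional solution of the \emph{linear} Fokker--Planck equation
\begin{equation*}
  \pa_t\mu = \diver(\mu\na U_i) + \Delta\bigg(\sigma_i\mu
  + \mu\sum_{j=1}^n f_\eta(B_{ij}^\eta * u_{\eta,j})\bigg),\qquad \mu(0)=u_{0,i},
\end{equation*}
in which $u_{\eta,j}$ is now frozen as a known coefficient. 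The PDE solution $u_{\eta,i}$ from Theorem \ref{thm.nonloc} also solves this same linear Cauchy problem, and a duality argument against the (smooth, well-posed) backward Kolmogorov equation for this coefficient field yields uniqueness in the class of finite measures, forcing $\mu=u_{\eta,i}(t)\,\d x$. Taking tensor products over $(k,i)$ gives $\overline{u}_\eta(t)^{\otimes N}$; the case of $\widehat{X}(t)$ is identical with \eqref{1.skt} replacing \eqref{1.nonloc}. The main obstacle is precisely this Fokker--Planck uniqueness step, the only place where the argument goes beyond textbook It\^o theory; the essential structural inputs are the uniform ellipticity $\sigma_i>0$ and the smoothness of $u_\eta$ supplied by Theorem \ref{thm.nonloc}.
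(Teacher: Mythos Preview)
Your treatment of part (i) matches the paper's: both reduce to the classical Lipschitz SDE theory of Karatzas--Shreve, with you spelling out why the coefficients are globally Lipschitz while the paper simply cites the relevant theorems.

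For part (ii) you take a different route. The paper invokes Malliavin calculus (Nualart, Theorem 2.3.1) to obtain absolute continuity of the law of $\overline{X}^\eta(t)$ and $\widehat{X}(t)$ directly, and the tensor-product structure follows from independence; that is all Proposition~\ref{prop.ex} claims, since $\overline{u}_\eta$ and $\widehat{u}$ are merely the \emph{names} assigned to the resulting densities. The identification $\overline{u}_\eta=u_\eta$ and $\widehat{u}=u$ is deferred to Section~\ref{sec.link}, where the paper runs precisely the It\^o-plus-duality argument you outline here. So your Fokker--Planck approach is correct but proves strictly more than the proposition asserts: you are effectively merging Proposition~\ref{prop.ex}(ii) with the theorems of Section~\ref{sec.link}. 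The paper's organization buys modularity (density existence separated from PDE identification) at the cost of an appeal to Malliavin calculus; your approach avoids Malliavin calculus entirely but front-loads the PDE uniqueness work and requires Theorems~\ref{thm.nonloc}--\ref{thm.loc} already at this stage.
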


The proof follows from \cite{KaSh91} and \cite{Nua06}. Indeed, 
Theorem 2.9 in \cite[page 289]{KaSh91} shows that there exist continuous 
square-integrable stochastic processes, which are strong solutions to 
\eqref{1.pl}, \eqref{1.il}, and \eqref{1.ml},
respectively. Strong uniqueness is guaranteed by Theorem 2.5 in \cite[page 287]{KaSh91}.
We conclude from \cite[Theorem 2.3.1]{Nua06} that
$\overline{X}_\eta(t)$ and $\widehat{X}(t)$ are absolutely continuous with respect
to the Lebesgue measure and thus, they possess density functions
$\overline{u}_\eta(t,x)^{\otimes N}$ and $\widehat{u}(t,x)^{\otimes N}$, 
respectively.
We prove in Section \ref{sec.link} that the density functions $\overline{u}_\eta$ and
$\widehat{u}$ can be identified with $u_\eta$ and $u$, the solutions to
\eqref{1.nonloc} and \eqref{1.skt}, respectively.

The following theorem is our main result. 

\begin{theorem}\label{thm.conv}
Let $X_{k,i}^{N,\eta}$ and $\widehat{X}_{k,i}$ be the solutions to
\eqref{1.pl} and \eqref{1.ml}, respectively. Then there exist parameters $\delta>0$,
depending on $n$, $\sigma_{\rm min}$, and $T$, 
and $\eps>0$, depending on $u_0$, such that if
$\eta^{-2(d+1+\alpha)}\le\delta\log N$ and $\|f\|_{C^{s+1}(I)}\le\eps$, 
$$
  \sup_{k=1,\ldots,N}\E\bigg(\sum_{i=1}^n\sup_{0<s<T}
	\big|(X_{k,i}^{N,\eta}-\widehat{X}_{k,i})(s)\big|^2\bigg) 
	\le C(T,n,\sigma_{\rm min})\eta^{2(1-\alpha)},
$$
where $\alpha\ge 0$ is defined in Assumption (A4).
\end{theorem}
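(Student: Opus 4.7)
The plan is to introduce the intermediate process $\overline{X}^\eta_{k,i}$ from \eqref{1.il}, driven by the \emph{same} Brownian motions $W^k_i$ and initial data $\xi^k_i$ as both $X^{N,\eta}_{k,i}$ and $\widehat{X}_{k,i}$, and then control the two legs
\[
 \mathbb{E}\Big[\sup_{[0,T]}\big|X^{N,\eta}_{k,i}-\overline{X}^\eta_{k,i}\big|^2\Big]\quad\text{and}\quad \mathbb{E}\Big[\sup_{[0,T]}\big|\overline{X}^\eta_{k,i}-\widehat{X}_{k,i}\big|^2\Big]
\]
separately, combining them via $|a+b|^2\le 2|a|^2+2|b|^2$. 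On each leg, the drift difference is $-(X^\flat-X^\sharp)$ because $\nabla U_i(x)=-x$, which only produces a benign linear Grönwall contribution. The diffusion coefficients are bounded below by $\sqrt{2\sigma_{\min}}$, so the elementary bound $|\sqrt{a}-\sqrt{b}|\le|a-b|/(2\sqrt{\sigma_{\min}})$ reduces the martingale bracket to differences of the arguments of $f_\eta$ (resp.\ $f$). The backbone is then Itô's formula, the Burkholder--Davis--Gundy inequality applied to the stochastic integrals, and Grönwall.

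For the first leg, the argument difference inside $f_\eta$ splits naturally into a \emph{stability} term $\tfrac{1}{N}\sum_\ell[B^\eta_{ij}(X-X_\ell)-B^\eta_{ij}(\overline{X}-\overline{X}_\ell)]$ and a \emph{fluctuation} term $\tfrac{1}{N}\sum_\ell B^\eta_{ij}(\overline{X}-\overline{X}_\ell)-(B^\eta_{ij}*u_{\eta,j})(\overline{X})$. The stability term is bounded via $\|\nabla B^\eta_{ij}\|_\infty\lesssim\eta^{-d-1}$ and, after exchangeability of the copies $\overline{X}^\eta_{\ell,j}$, reproduces the quantity one is estimating; multiplying by the Lipschitz constant $\eta^{-\alpha}$ of $f_\eta$ produces a Grönwall rate of order $\eta^{-2(d+1+\alpha)}$. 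The fluctuation term is a sum of i.i.d.\ mean-zero variables (using the independence of the copies $\overline{X}^\eta_{\ell,j}$ provided by Proposition \ref{prop.ex}(ii)), whose $L^2$ variance is bounded by $N^{-1}\|B^\eta_{ij}\|_\infty\|B^\eta_{ij}*u_{\eta,j}\|_\infty\lesssim N^{-1}\eta^{-d}$. Grönwall then yields a bound of the form
\[
 \mathbb{E}\Big[\sup_{[0,T]}|X^{N,\eta}_{k,i}-\overline{X}^\eta_{k,i}|^2\Big]\le C\,\eta^{-d-2\alpha}N^{-1}\exp\!\big(CT\eta^{-2(d+1+\alpha)}\big),
\]
and under the scaling hypothesis \eqref{1.scaling.deriv} the exponential is at most $N^{CT\delta}$; choosing $\delta$ small makes $CT\delta$ small enough to absorb the $N^{-1}$ and leave a quantity comfortably below $\eta^{2(1-\alpha)}$.

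For the second leg, one decomposes
\[
 B^\eta_{ij}*u_{\eta,j}(\overline{X})-a_{ij}u_j(\widehat{X}) = B^\eta_{ij}*(u_{\eta,j}-u_j)(\overline{X})+\big(B^\eta_{ij}*u_j-a_{ij}u_j\big)(\overline{X})+a_{ij}\big(u_j(\overline{X})-u_j(\widehat{X})\big).
\]
Theorem \ref{thm.loc} controls the first summand via $\|u_{\eta,j}-u_j\|_{L^\infty_tL^2_x}\le C(T)\eta$ together with the uniform $L^\infty$ bound on $u_{\eta,j},u_j$ from Theorems \ref{thm.nonloc}--\ref{thm.loc} used to interpolate between $L^2$ and $L^\infty$; a standard convolution estimate using $\|B^\eta_{ij}\|_{L^1}=A$ and $u_j\in W^{1,\infty}$ handles the second summand with a factor $\eta$; and the third summand, Lipschitz in $|\overline{X}-\widehat{X}|$, feeds the Grönwall loop. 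Crucially, the argument of $f_\eta$ here satisfies $\|B^\eta_{ij}*u_{\eta,j}\|_\infty\le A\|u_{\eta,j}\|_\infty\le 2AC_s\|u_0\|_{H^s}$, hence lies in the interval $I$ from \eqref{def.I}, so $f_\eta=f$ along this leg and the $\eta^{-\alpha}$ factor is replaced by the finite Lipschitz constant of $f$ on $I$; a plain Grönwall then produces $\mathbb{E}[\sup_{[0,T]}|\overline{X}^\eta_{k,i}-\widehat{X}_{k,i}|^2]\le C(T)\eta^2$.

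The main obstacle is the interplay on the first leg between the Grönwall blow-up $\exp(CT\eta^{-2(d+1+\alpha)})$ and the empirical fluctuation of size $N^{-1}\eta^{-d-2\alpha}$: both diverge as $\eta\to 0$, and only the moderate scaling \eqref{1.scaling.deriv} makes the exponential merely polynomial in $N$ with an exponent strictly below $1$, allowing the product to be absorbed into $\eta^{2(1-\alpha)}$. A secondary care is that along the microscopic trajectory the empirical interaction $\tfrac{1}{N}\sum_\ell B^\eta_{ij}(X^{N,\eta}_{k,i}-X^{N,\eta}_{\ell,j})$ remains inside $I$ with high probability (so that $f_\eta=f$ there): this is obtained by comparison with the intermediate interaction via the stability/fluctuation split just established, so the two legs must be run \emph{simultaneously} in a bootstrap argument rather than strictly in sequence. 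Triangulating and summing over $i=1,\dots,n$ then delivers the claimed rate $C(T,n,\sigma_{\min})\eta^{2(1-\alpha)}$.
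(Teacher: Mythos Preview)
Your two-leg decomposition through the intermediate process $\overline{X}^\eta_{k,i}$, with Burkholder--Davis--Gundy, the Lipschitz bounds $L_{f_\eta}\le\eta^{-\alpha}$ and $\|\nabla B^\eta_{ij}\|_\infty\lesssim\eta^{-d-1}$, the mean-zero fluctuation estimate of order $N^{-1}\eta^{-O(1)}$, Gr\"onwall, and the moderate scaling to tame the resulting exponential, is exactly the paper's route (Lemmas~\ref{lem.conv1} and~\ref{lem.conv2}).

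One point deserves correction: the ``secondary care'' you raise --- that the empirical interaction $\tfrac{1}{N}\sum_\ell B^\eta_{ij}(X^{N,\eta}_{k,i}-X^{N,\eta}_{\ell,j})$ must remain in $I$ so that $f_\eta=f$ --- is unnecessary, and no bootstrap is needed. On the first leg the stochastic system \eqref{1.pl} is written with $f_\eta$, and the estimate only ever uses its \emph{global} Lipschitz constant $\eta^{-\alpha}$; nothing requires $f_\eta=f$ along the microscopic trajectory. Hence the two legs are proved strictly in sequence, as in the paper. Incidentally, your observation that on the second leg all arguments $B^\eta_{ij}*u_{\eta,j}$, $B^\eta_{ij}*u_j$, $a_{ij}u_j$ lie deterministically in $I$ (so that $f_\eta=f$ with $\eta$-independent Lipschitz constant there) is correct and yields $C(T)\eta^2$ for that leg, slightly sharper than the paper's $\eta^{2(1-\alpha)}$, which comes from using the global Lipschitz bound of $f_\eta$ in the term the paper calls $D_{22}$.
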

\begin{remark}\label{prop.cha}\rm
It is well-known that this result implies \textit{propagation of chaos} in the 
single-species case; see, e.g., \cite[Section 3.1]{JaWa17}. In the multi-species case, 
this generalizes for fixed $k$ to the convergence of the $k$-marginal distribution 
$F_k(t)$ of $(X_{{j_1},i_1}^{N,\eta}(t), \ldots, X_{{j_{k}},i_k}^{N,\eta}(t))$ 
at any time $t>0$ 
towards the product measure $\otimes_{\ell=1}^k u_{i_\ell}(\cdot,t)$ as 
$N\to\infty$, $\eta\to 0$, i.e.
$$
  W_2^2\bigg(F_k(t),\bigotimes_{\ell=1}^k u_{i_\ell}(\cdot,t)\bigg) 
	\leq kC(T,n,\sigma_{\rm min})\eta \to 0,
$$
where $W_2$ denotes the 
2-Wasserstein distance.
\qed
\end{remark}

%%%%%%%%%%%%%%%%%%%%%%%%%%%%%%%%%%%%%%%%%%%%%%%%%%%%%%%%%%%%%%%%%%%%%%%%%%

\section{Proof of Theorem \ref{thm.nonloc}}\label{sec.nonloc}

We prove the global existence of smooth solutions to the nonlocal system
\eqref{1.nonloc}. Since $\eta$ is fixed in the proof, 
we omit it for $u_\eta$ to simplify the notation. We split the proof in several steps.
\red{In the first step, we prove the existence of local-in-time solutions
satisfying $\|u_i(t)\|_{H^s(\R^d)}\le 2\|u_0\|_{H^s(\R^d)}$ for $0<t<T(\eta)$ for some
(possibly) small $T(\eta)>0$. 
Actually, we show in the second step, that the factor 2 can be replaced by one.
This uniform estimate allows us in the third step to conclude the global existence.} 

{\em Step 1: Local existence of solutions.} In this step,
the smallness conditions on $\eta$ and $f$ are not needed.
The idea is to apply the Banach fixed-point theorem on the space
$$
  X_T := \big\{v\in L^\infty(0,T;H^s(\R^d;\R^n)):
	\|v\|_{L^\infty(0,T;H^s(\R^d))} \le 2\|u_0\|_{H^s(\R^d)}\big\},
$$
where $T>0$ will be determined later in this proof. 
We define the fixed-point operator $S:X_T\to X_T$, $S(v)=u$,
where $u$ is the unique solution to the linear problem
\begin{equation}\label{2.lin}
  \pa_t u_i = \diver(u_i\na U_i) + \Delta\big(u_i(\sigma_i + K_i(v(t,x)))\big),
	\quad u_i(0)=u_{0,i}\quad\mbox{in }\R^d,\ t>0,
\end{equation}
with $K_i(v)=\sum_{j=1}^n f_\eta(B_{ij}^\eta*v_j)\ge 0$, $i=1,\ldots,n$. 
We need to show that $S$ is well defined. We infer from Young's convolution inequality
(Lemma \ref{lem.convul}) 
and the embedding $H^s(\R^d)\hookrightarrow L^{\infty}(\R^d)$ that
\begin{align}
  \sup_{0<t<T}\|\na K_i(v)\|_{L^\infty(\R^d)}
  &\le \sum_{j=1}^n\|f'_\eta\|_{L^\infty(\R)}\|\na B_{ij}^\eta\|_{L^1(\R^d)}
	\sup_{0<t<T}\|v_j(t)\|_{L^\infty(\R^d)} \nonumber \\
	&\le C(\eta) \sum_{j=1}^n\|v_j\|_{L^\infty(0,T;H^s(\R^d))}
	< \infty, \label{2.Ki}
\end{align}
i.e., $K_i(v)$ is globally Lipschitz continuous. Therefore, a Galerkin argument
to verify higher-order regularity shows that, for given $v\in X_T$, 
there exists a unique solution 
$u_i\in L^\infty(0,T;H^{s}(\R^d))\cap L^2(0,T;H^{s+1}(\R^d))$ to
\eqref{2.lin}. It remains to show that $u=(u_1,\ldots,u_n)\in X_T$ for some $T>0$.
The estimations are not difficult, but since $\na U_i$ is not square
integrable, some care is needed.

First, we prove higher-order estimates for $K_i(v)$. 
Let $\alpha\in\N_0^d$ be a multi-index with order $|\alpha|=m\leq s$.
By Lemma \ref{lem.moser2} and Young's convolution inequality,
\begin{align}
  \int_0^T\|D^\alpha K_i(v)\|_{L^2(\R^d)}^2\d t 
	&\le C\int_0^T\sum_{j=1}^n\|f'_\eta\|_{C^{m-1}(\R)}^2
	\|B_{ij}^\eta*v_j\|_{L^\infty(\R^d)}^{2(m-1)}
	\|D^\alpha(B_{ij}^\eta*v_j)\|_{L^2(\R^d)}^2 \d t \nonumber \\
  &\le C(\eta)\int_0^T\sum_{j=1}^n\|B_{ij}^\eta\|_{L^1(\R^d)}^{2m}
	\|v_j\|_{L^\infty(\R^d)}^{2(m-1)}\|D^\alpha v_j\|_{L^2(\R^d)}^2 \d t 
	\nonumber \\
	&\le C(\eta)\sum_{j=1}^n\int_0^T\|v_j\|_{H^s(\R^d)}^{2m} \d t < \infty, 
	\label{2.Ki2}
\end{align}
where here and in the following, $C>0$, $C(\eta)>0$, etc.\ are generic constants 
with values changing from line to line.
In a similar way, applying Lemmas \ref{lem.convul} and \ref{lem.moser},
\begin{align}
	\sup_{0<t<T}\|&D^\alpha\na K_i(v)\|_{L^2(\R^d)}^2
	\le C\sup_{\red{0<t<T}}\sum_{j=1}^n\big\|D^\alpha\big(f_\eta'(B_{ij}^\eta*v_j)
	\na B_{ij}^\eta*v_j\big)\big\|_{L^2(\R^d)}^2 \nonumber \\
	&\le C\sup_{0<t<T}\sum_{j=1}^n\Big(\|f'_\eta(B_{ij}^\eta*v_j)\|_{L^\infty(\R^d)}
	\|\na B_{ij}^\eta\|_{L^1(\R^d)}\|D^{m} v_j\|_{L^2(\R^d)} \nonumber \\
	&\phantom{xx}{}+ \|D^{m} (f'_\eta(B_{ij}^\eta*v_j))\|_{L^2(\R^d)}
	\|\na B_{ij}^\eta\|_{L^1(\R^d)}\|v_j\|_{L^\infty(\R^d)}\Big)^2
	\le C(\eta), \label{2.Ki3}
\end{align}
since, according to Lemma \ref{lem.moser2},
we can bound $\sup_{0<t<T}\|D^{m}(f'_\eta(B_{ij}*v_j))\|_{L^2(\R^d)}$ 
in terms of $\|f_\eta\|_{C^{s+1}(\R)}$, $\|B_{ij}^\eta\|_{L^1(\R^d)}$,
and $\sup_{0<t<T}\|v_j\|_{H^s(\R^d)}$, and it holds that 
$\|\na B_{ij}^\eta\|_{L^1(\R^d)}\le C(\eta)$.

We proceed with the proof of $u\in X_T$ for some $T>0$.
Applying $D^\alpha$ to \eqref{2.lin}, multiplying the resulting equation by
$D^\alpha u_i$, and integrating over $(0,\tau)\times\R^d$ for $\tau<T$ yields
\begin{equation}\label{2.aux}
  \frac12\int_{\R^d} |D^\alpha u_i(\tau)|^2 \d x 
	- \frac12\int_{\R^d}|D^\alpha u_{0,i}|^2\d x
	+ \sigma_i\int_0^\tau\int_{\R^d}|\na D^\alpha u_i|^2 \d x\d t
	= I_1 + I_2 + I_3,
\end{equation}
where
\begin{align*}
  I_1 &= -\int_0^\tau\int_{\R^d}\na D^\alpha u_i\cdot D^\alpha(u_i\na U_i)\d x\d t, \\
	I_2 &= -\int_0^\tau\int_{\R^d}\na D^\alpha u_i\cdot 
	D^\alpha(\na u_iK_i(v))\d x\d t, \\
  I_3 &= -\int_0^\tau\int_{\R^d}\na D^\alpha u_i\cdot 
	D^\alpha(u_i\na K_i(v))\d x\d t.
\end{align*}

First, let $|\alpha|=m=0$. Then, integrating by parts in $I_1$,
using Young's inequality, and observing that $U_i (x)= -\frac12|x|^2$,
\begin{align*}
  I_1 &= \frac12\int_0^\tau\int_{\R^d}u_i^2\Delta U_i \d x\d t
	= -\frac{d}{2}\int_0^\tau\int_{\R^d}u_i^2\d x\d t \le 0, \\
	I_2 &= -\int_0^\tau\int_{\R^d}K_i(v)|\na u_i|^2 \d x\d t \le 0, \\
  I_3 &\le \frac{\sigma_i}{2}\int_0^\tau\int_{\R^d}|\na u_i|^2\d x\d t 
	+ \frac{1}{2\sigma_i}
	\|\na K_i(v)\|_{L^\infty(0,T;L^\infty(\R^d))}^2\int_0^\tau \|u_i\|_{L^2(\R^d)}^2\d t,
\end{align*}
where we used $K_i(v)\ge 0$ for $I_2$. 
It follows from \eqref{2.Ki} that
$$
  I_1+I_2+I_3 \le \frac{\sigma_i}{2}\int_0^\tau\int_{\R^d}|\na u_i|^2\d x\d t
	+ C\int_0^\tau\|u_i\|_{L^2(\R^d)}^2\d t,
$$
where $C>0$ depends on the $L^\infty(0,T;H^s(\R^d))$ norm of $v$.
Inserting this estimate into \eqref{2.aux} with $\alpha=0$ and 
applying the Gronwall inequality, we infer that
$$
  \int_{\R^d}u_i(\tau)^2\d x + \frac{\sigma_i}{2}
	\int_0^\tau\int_{\R^d}|\na u_i|^2\d x\d t \le C(u_0)e^{C\tau}.
$$
This shows that $u_i$ is bounded in $L^\infty(0,T;L^2(\R^d))$ and
$L^2(0,T;H^1(\R^d))$. 

Now, let $|\alpha|=m\ge 1$. Then, integrating by parts, using $\Delta U_i\le 0$, 
and applying Young's inequality again,
\begin{align*}
  I_1 &= \frac12\int_0^\tau\int_{\R^d}(D^\alpha u_i)^2\Delta U_i \d x\d t
	- \int_0^\tau\int_{\R^d}\na D^\alpha u_i\cdot\big(D^\alpha(u_i\na U_i)
	-D^\alpha u_i\na U_i\big)\d x\d t \\
	&\le \frac{\sigma_i}{4}\int_0^\tau\int_{\R^d}|\na D^\alpha u_i|^2 \d x\d t
	+ \sum_{0<|\beta|\le|\alpha|}\int_0^\tau c_\beta
  \|D^{\alpha-\beta}u_i\|_{L^2(\R^d)}^2 \|D^\beta\na U_i\|_{L^\infty(\R^d)}^2 \d t \\
	&\le \frac{\sigma_i}{4}\int_0^\tau\int_{\R^d}|\na D^\alpha u_i|^2 \d x\d t
	+ C\int_0^\tau\|u_i\|_{H^{m-1}(\R^d)}^2\d t,
\end{align*}
where we used the fact that $D^\beta\na U_i$ is bounded for $|\beta|=1$ and vanishes
for $|\beta|>1$. It follows from integration by parts, $K_i(v)\ge 0$, and
Lemma \ref{lem.comm} that
\begin{align*}
  I_2 &= -\int_0^\tau\int_{\R^d}\na D^\alpha u_i\cdot\big(D^\alpha(\na u_i K_i(v))
	- \na D^\alpha u_i K_i(v)\big)\d x\d t \\
	&\phantom{xx}{}- \int_0^\tau\int_{\R^d}K_i(v)|\na D^\alpha u_i|^2\d x\d t \\
	&\le \frac{\sigma_i}{4}\int_0^\tau\int_{\R^d}|\na D^\alpha u_i|^2\d x\d t 
	+ C\int_0^\tau\big(\|DK_i(v)\|_{L^\infty(\R^d)}
	\|D^{m-1}\na u_i\|_{L^2(\R^d)} \\
	&\phantom{xx}{}
	+ \|D^{m} K_i(v)\|_{L^2(\R^d)}\|\na u_i\|_{L^\infty(\R^d)}\big)^2\d x\d t.
\end{align*}
We infer from estimates \eqref{2.Ki} and \eqref{2.Ki2} for $K_i(v)$ and the
embedding $H^s(\R^d)\hookrightarrow W^{1,\infty}(\R^d)$ that
$$
  I_2 \le \frac{\sigma_i}{4}\int_0^\tau\int_{\R^d}|\na D^\alpha u_i|^2\d x\d t
	+ C\int_0^\tau\|u_i\|_{H^s(\R^d)}^2\d t.
$$
Finally, we use Lemma \ref{lem.moser} and estimates \eqref{2.Ki} and \eqref{2.Ki3}
to obtain
\begin{align*}
  I_3 &\le \frac{\sigma_i}{4}\int_0^\tau\int_{\R^d}|\na D^\alpha u_i|^2 \d x\d t 
	+ C\int_0^\tau\int_{\R^d}\big(\|u_i\|_{L^\infty(\R^d)}
	\|D^{m}\na K_i(v)\|_{L^2(\R^d)} \\
	&\phantom{xx}{}
	+ \|D^{m} u_i\|_{L^2(\R^d)}\|\na K_i(v)\|_{L^\infty(\R^d)}\big)^2\d x\d t \\
  &\le \frac{\sigma_i}{4}\int_0^\tau\int_{\R^d}|\na D^\alpha u_i|^2 \d x\d t
	+ C(\eta)\int_0^\tau\|u_i\|_{H^s(\R^d)}^2\d t.
\end{align*}

Inserting these estimates into \eqref{2.aux} and summing over $|\alpha|\le s$,
we arrive at
$$
  \|u_i(\tau)\|_{H^s(\R^d)}^2 
	+ \frac{\sigma_i}{4}\int_0^\tau\|\na u_i\|_{H^s(\R^d)}^2\d t
  \le \|u_{0,i}\|_{H^s(\R^d)}^2 + C(\eta)\int_0^\tau\|u_i\|_{H^s(\R^d)}^2\d t.
$$
Summing over $i=1,\ldots,n$ and applying Gronwall's inequality gives
$$
  \|u(\tau)\|_{H^s(\R^d)}^2\le \|u_0\|_{H^s(\R^d)}^2 e^{C(\eta)\tau}
	\le \|u_0\|_{H^s(\R^d)}^2 e^{C(\eta)T}.
$$
Choosing $T>0$ sufficiently small, we can ensure that $\|u(\tau)\|_{H^s(\R^d)}
\le 2\|u_0\|_{H^s(\R^d)}$ for all $0<\tau<T$. This shows that $u\in X_T$, i.e.,
the operator is well-defined.

Next, we prove that $S:X_T\to X_T$ is a contraction. Let $v$, $w\in X_T$ and set
$\bar v=S(v)$ and $\bar w=S(w)$. Taking the difference of equations \eqref{2.lin}
satisfied by $\bar v_i$ and $\bar w_i$, respectively, using the test function
$\bar v_i-\bar w_i$, and integrating by parts, it follows that
\begin{equation}\label{2.aux2}
  \frac12\int_{\R^d}(\bar v_i-\bar w_i)(\tau)^2\d x
	+ \sigma_i\int_0^\tau\int_{\R^d}|\na(\bar v_i-\bar w_i)|^2\d x\d t 
	= I_4+I_5+I_6,
\end{equation}
where
\begin{align*}
  I_4 &= \frac12\int_0^\tau\int_{\R^d}\Delta U_i(\bar v_i-\bar w_i)^2\d x\d t
	\le 0, \\
  I_5 &= -\int_0^\tau\int_{\R^d}\na\big((\bar v_i-\bar w_i)K_i(v)\big)\cdot
	\na(\bar v_i-\bar w_i)\d x\d t, \\
	I_6 &= -\int_0^\tau\int_{\R^d}\na\big(\bar w_i(K_i(v)-K_i(w))\big)\cdot\na
	(\bar v_i-\bar w_i)\d x\d t.
\end{align*}
Because of $K_i(v)\ge 0$ and estimate \eqref{2.Ki} for $\na K_i(v)$, we find that,
by Young's inequality,
\begin{align*}
  I_5 &= -\int_0^\tau\int_{\R^d}K_i(v)|\na(\bar v_i-\bar w_i)|^2\d x\d t
	- \int_0^\tau\int_{\R^d}(\bar v_i-\bar w_i)\na K_i(v)\cdot\na(\bar v_i-\bar w_i)
	\d x\d t \\
	&\le \frac{\sigma_i}{4}\int_0^\tau\|\na(\bar v_i-\bar w_i)\|_{L^2(\R^d)}^2\d t
	+ C(\sigma_i)\int_0^\tau\|\bar v_i-\bar w_i\|_{L^2(\R^d)}^2
	\|\na K_i(v)\|_{L^\infty(\R^d)}^2\d t \\
	&\le \frac{\sigma_i}{4}\int_0^\tau\|\na(\bar v_i-\bar w_i)\|_{L^2(\R^d)}^2\d t
	+ C(\eta)\int_0^\tau\|\bar v_i-\bar w_i\|_{L^2(\R^d)}^2\d t.
\end{align*}
It follows again from Young's inequality that
\begin{align}
  I_6 &\le \frac{\sigma_i}{4}\int_0^\tau\|\na(\bar v_i-\bar w_i)\|_{L^2(\R^d)}^2\d t
	+ C(\sigma_i)\int_0^\tau\|\na \bar w_i\|_{L^\infty(\R^d)}^2
	\|K_i(v)-K_i(w)\|_{L^2(\R^d)}^2\d t \nonumber \\
	&\phantom{xx}{}+ C(\sigma_i)\int_0^\tau\|\bar w_i\|_{L^\infty(\R^d)}^2
	\|\na(K_i(v)-K_i(w))\|_{L^2(\R^d)}^2\d t. \label{2.aux4}
\end{align}
Since $\bar w\in X_T$, we have $\|\na\bar w_i\|_{L^\infty(\R^d)}\le 
C\|\bar w_i\|_{H^s(\R^d)}\le C(u_0)$ and $\|\bar w_i\|_{L^\infty(\R^d)}\le C(u_0)$.
We use the fact that $f_\eta$ and $f'_\eta$ are globally Lipschitz continuous:
\begin{align*}
  \|K_i(v)-K_i(w)\|_{L^2(\R^d)}
	&\le C(\eta)\sum_{j=1}^n\|B_{ij}^\eta*(v_j-w_j)\|_{L^2(\R^d)} 
	\le  C(\eta)\|v-w\|_{L^2(\R^d)}, \\
	\|\na(K_i(v)-K_i(w))\|_{L^2(\R^d)}
	&\le \sum_{j=1}^n\|(f'_\eta(B_{ij}^\eta*v_j)-f'_\eta(B_{ij}^\eta*w_j))
	B_{ij}^\eta*\na v_j\|_{L^2(\R^d)} \\
	&\phantom{xx}{}
	+ \sum_{j=1}^n\|f'_\eta(B_{ij}^\eta*w_j)\na B_{ij}^\eta*(v_j-w_j)\|_{L^2(\R^d)} \\
	&\le C(\eta)\sum_{j=1}^n\|v_j-w_j\|_{L^2(\R^d)}\|B_{ij}^\eta\|_{L^1(\R^d)}
	\|\na v_j\|_{L^\infty(\R^d)} \\
	&\phantom{xx}{}+ C(\eta)\sum_{j=1}^n\|\na B_{ij}^\eta\|_{L^1(\R^d)}
	\|v_j-w_j\|_{L^2(\R^d)} \\
	&\le C(\eta)\|v-w\|_{L^2(\R^d)}.
\end{align*}
Inserting these inequalities into \eqref{2.aux4} and summarizing the estimates
for $I_4$, $I_5$, and $I_6$, we conclude from \eqref{2.aux2} and
summation over $i=1,\ldots,n$ that
\begin{align*}
  \frac12\|&(\bar v-\bar w)(\tau)\|_{L^2(\R^d)}^2
	+ \sum_{i=1}^n\frac{\sigma_i}{4}\int_0^\tau
	\|\na(\bar v_i-\bar w_i)\|_{L^2(\R^d)}^2\d t \\
	&\le C_1\int_0^\tau\|\bar v-\bar w\|_{L^2(\R^d)}^2\d t
  + C_2\tau\|v-w\|_{L^\infty(0,\tau;L^2(\R^d))}^2.
\end{align*}
We apply Gronwall's inequality and the supremum over $0<\tau<T$ to find that
$$
  \|\bar v-\bar w\|_{L^\infty(0,T;L^2(\R^d))}^2
	\le \red{2}C_2 e^{\red{2}C_1T}T\|v-w\|_{L^\infty(0,T;L^2(\R^d))}^2.
$$
Thus, choosing $T>0$ such that $\red{2}C_2e^{\red{2}C_1T}T<1$, 
we infer that $S:X_T\to X_T$
is a contraction. By Banach's fixed-point theorem, there exists a unique solution
$u\in L^\infty(0,T;H^s(\R^d))\cap L^2(0,T;H^{s+1}(\R^d))$ 
to \eqref{1.nonloc}.

{\em Step 2: \red{A priori} estimates.} Let $u=u_\eta$ be the unique solution to
\eqref{1.nonloc}. We know from Step 1 that
$\|u_i(t)\|_{L^\infty(\R^d)}\le \red{C_s}\|u_i(t)\|_{H^s(\R^d)}
\le 2C_s\|u_0\|_{H^s(\R^d)}$
for any $0<t<T$. Recall that $T=T(\eta)$ and hence we do not have uniform estimates 
in $\eta$ even for small $T>0$ at this step. 
\red{We show in this step the estimate $\|u_i(t)\|_{H^s(\R^d)} 
\le \|u_0\|_{H^s(\R^d)}$, which allows us to conclude that the
end time $T$ can be arbitrary and actually does not depend on $\eta$.}
We apply $D^\alpha$ to \eqref{1.nonloc} (with $|\alpha|=m\leq s$), multiply
the resulting equation by $D^\alpha u_i$, and integrate over $(0,\tau)\times\R^d$ for
$\tau<T$, similarly to the corresponding estimate in Step 1:
\begin{equation}\label{2.aux3}
  \frac12\int_{\R^d}|D^\alpha u_i(\tau)|^2\red{\d x}
	- \frac12\int_{\R^d}|D^\alpha u_{0,i}|^2\red{\d x}
	+ \sigma_i \int_0^\tau\int_{\R^d}|\na D^\alpha u_i|^2\d x\d t
	= I_7 + I_8 + I_9,
\end{equation}
where
\begin{align*}
  I_7 &= -\int_0^\tau\int_{\R^d}\na D^\alpha u_i\cdot D^\alpha(u_i\na U_i)\d x\d t, \\
	I_8 &= -\int_0^\tau\int_{\R^d}\na D^\alpha u_i\cdot D^\alpha(\na u_iK_i(u))\d x\d t,
	\\
	I_9 &= -\int_0^\tau\int_{\R^d}\na D^\alpha u_i\cdot D^\alpha(u_i \na K_i(u))\d x\d t,
\end{align*}
and we recall that $K_i(u)=\sum_{j=1}^n f_\eta(B_{ij}^\eta*u_j)$.

First, let $m=0$. Arguing similarly as for $I_1$ and $I_2$, we find that $I_7\le 0$
and $I_8\le 0$.
We estimate $\na K_i(u)=\sum_{j=1}^n f'_\eta(B_{ij}^\eta*u_j)B_{ij}^\eta*\na u_j$:
\begin{equation}\label{2.Ku}
  \|\na K_i(u)\|_{L^2(\R^d)}
	\le A\sum_{j=1}^n\|f'_\eta(B_{ij}^\eta*u_j)\|_{L^\infty(\R^d)}
	\|\na u_j\|_{L^2(\R^d)},
\end{equation}
recalling that $A=\max_{i,j=1,\ldots,n}$ $\|B_{ij}^\eta\|_{L^1(\R^d)}$. This gives 
for $m=0$:
\begin{align*}
	I_9 &\le \|u_i\|_{L^\infty(0,\tau;L^\infty(\R^d))}\int_0^\tau
  \|\na u_i\|_{L^2(\R^d)}\|\na K_i(u)\|_{L^2(\R^d)}\d t \\
	&\le C\|u_0\|_{H^s(\R^d)}\sum_{j=1}^n
	\|f'_\eta(B_{ij}^\eta*u_j)\|_{L^\infty(0,\tau;L^\infty(\R^d))}
	\int_0^\tau\|\na u_j\|_{L^2(\R^d)}^2\d t.
\end{align*}

From this point on, we will need the smallness condition on $f_\eta$ and $f'_\eta$.
Because of 
\begin{equation}\label{2.Bu}
  \|B_{ij}^\eta*u_j(t)\|_{L^\infty(\R^d)}\le \|B_{ij}^\eta\|_{L^1(\R^d)}
	C_s\|u_j(t)\|_{H^s(\R^d)} \le 2AC_s\|u_0\|_{H^s(\R^d)},
\end{equation}
where $C_s>0$ is the constant of the embedding $H^s(\R^d)\hookrightarrow 
L^\infty(\R^d)$, $(B_{ij}^\eta*u_j(t))(x)$ lies in the interval
$I=[-2AC_s\|u_0\|_{H^s(\R^d)},2AC_s\|u_0\|_{H^s(\R^d)}]$ for $0<t<T$ and $x\in\R^d$.
On this interval, $f_\eta=f$ if $\eta>0$ is sufficiently small. 
From now on, we use $f\le \eps$ and $|f'|\le\eps$ on $I$ for a small 
$\eps>0$. Thus, we have
$$
	I_9 \le C\eps\|u_0\|_{H^s(\R^d)}\red{\sum_{j=1}^n
	\int_0^\tau\|\na u_j\|_{L^2(\R^d)}^2\d t}.
$$

Inserting these estimates into \eqref{2.aux3}, we conclude that
$$
  \|u_i(\tau)\|_{L^2(\R^d)}^2 + \big(\sigma_i-C\eps\|u_0\|_{H^s(\R^d)}\big)
	\int_0^\tau\|\na u_i\|_{L^2(\R^d)}^2\d t\le \|u_{0,i}\|_{L^2(\R^d)}^2.
$$
Choosing $\eps>0$ sufficiently small, this gives an estimate for $u_i$
in $L^\infty(0,T;L^2(\R^d))\cap L^2(0,T;H^1(\R^d))$.

Next, let $m\ge 1$. The estimate for $I_7$ is delicate since 
$\na U_i\not\in L^2(\R^d)$, and the corresponding estimate for $I_1$ cannot
be directly used. We split $I_7$ into two parts:
\begin{align}
  I_7 &= \int_0^\tau\int_{\R^d}D^\alpha u_i D^\alpha(\na u_i\cdot\na U_i 
	+ u_i\Delta U_i)\d x\d t \nonumber \\
	&= \int_0^\tau\int_{\R^d}D^\alpha u_i\big(D^\alpha(\na u_i\cdot\na U_i)
	- D^\alpha\na u_i\cdot\na U_i\big)\d x\d t \nonumber \\
  &\phantom{xx}{}+ \int_0^\tau\int_{\R^d}D^\alpha u_i\big(D^\alpha(u_i\Delta U_i) 
	- D^\alpha u_i\Delta U_i\big)\d x\d t, \label{2.I7}
\end{align}
noting that the second terms in both integrals are the same (with different signs)
because of
$$
  -\int_{\R^d}D^\alpha u_i D^\alpha\na u_i\cdot\na U_i\d x
	= -\frac12\int_{\R^d}\na(D^\alpha u_i)^2\cdot\red{\na} U_i\d x
	= \frac12\int_{\R^d}(D^\alpha u_i)^2\Delta U_i\d x.
$$
Moreover, the last integral in \eqref{2.I7} vanishes since $\Delta U_i=-\red{d}$. 
In the first integral of the right-hand side of \eqref{2.I7}, the first-order
derivative of $U_i$ cancels, while the second-order derivative equals
$\pa^2 U_i/\pa x_j\pa x_k=-\delta_{jk}$ and all higher-order derivatives of $U_i$
vanish. Then a straightforward computation leads to
$$
  I_7 = -d\int_0^\tau\int_{\R^d}(D^\alpha u_i)^2 \d x\d t \le 0.
$$

For the estimates of $I_8$ and $I_9$, we need
a smallness condition on $f$ and its derivatives. 
We apply Young's inequality and Lemma \ref{lem.moser} to estimate 
the (more delicate) term $I_9$:
\begin{align*}
  I_9 &\le \frac{\sigma_i}{4}\int_0^\tau\|\na D^\alpha u_i\|_{L^2(\R^d)}^2\d t
	+ C(\sigma_i)\int_0^\tau\|D^\alpha(u_i\na K_i(u))\|_{L^2(\R^d)}^2\d t \\
	&\le \frac{\sigma_i}{4}\int_0^\tau\|\na D^\alpha u_i\|_{L^2(\R^d)}^2\d t 
	+ C\int_0^\tau\big(\|u_i\|_{L^\infty(\R^d)}
	\|D^{m}\na K_i(u)\|_{L^2(\R^d)} \\
	&\phantom{xx}{}
	+ \|D^{m} u_i\|_{L^2(\R^d)}\|\na K_i(u)\|_{L^\infty(\R^d)}\big)^2\d t.
\end{align*}
Estimate \eqref{2.Bu} shows that $f_\eta=f$ and $|f'|\le\eps$ on $I$. Then,
by similar arguments leading to \eqref{2.Ku}, 
\begin{align*}
  \|\na K_i(u)\|_{L^\infty(\R^d)}
	&\red{\le A\sum_{j=1}^n\|f_\eta'(B_{ij}^\eta*u_j)\|_{L^\infty(\R^d)}
	\|\na u_j\|_{L^\infty(\R^d)}} \\
	&\le A\eps\|\na u\|_{L^\infty(\R^d)}
	\le \eps AC_s\|\na u\|_{H^s(\R^d)}.
\end{align*}
Moreover, using Lemma \ref{lem.moser2}, the embedding $H^s(\R^d)\hookrightarrow
W^{1,\infty}(\R^d)$, and $m\leq s$,
\begin{align*}
  \|D^{m}&\na K_i(u)\|_{L^2(\R^d)} 
	\le A\sum_{j=1}^n\|\na u_j\|_{L^\infty(\R^d)}
	\|D^{m}(f'_\eta(B_{ij}^\eta*u_j))\|_{L^2(\R^d)} \\
	&\le C\sum_{j=1}^n\|\na u_j\|_{H^s(\R^d)}\|f''\|_{C^{m-1}(I)}
	\|B_{ij}^\eta*u_j\|_{L^\infty(\R^d)}^{m-1}
	\|B_{ij}^\eta*D^{m} u_j\|_{L^2(\R^d)} \\
	&\le \eps C\|\na u\|_{H^s(\R^d)}\|u\|_{L^\infty(\R^d)}^{m-1}
	\|D^{m} u\|_{L^2(\R^d)}
	\le \eps C\|\na u\|_{H^s(\R^d)}\|u_0\|_{H^s(\R^d)}^{s},
\end{align*}
recalling definition \eqref{def.I} of the interval $I$.
Consequently, the estimate for $I_9$ becomes
$$
  I_9 \le \frac{\sigma_i}{4}\int_0^\tau\|\na D^\alpha u_i\|_{L^2(\R^d)}^2\d t
	+ C\eps^2\|u_0\|_{H^s(\R^d)}^{2s}\int_0^\tau\|\na u\|_{H^s(\R^d)}^2\d t.
$$
The term $I_8$ is treated in a similar way, resulting in
$$
  I_8 \le \frac{\sigma_i}{4}\int_0^\tau\|\na D^\alpha u_i\|_{L^2(\R^d)}^2\d t
	+ C\eps^2\|u_0\|_{H^s(\R^d)}^{2s}\int_0^\tau\|\na u\|_{H^s(\R^d)}^2\d t.
$$
Set $\sigma_{\rm min}=\min_{i=1,\ldots,n}\sigma_i>0$.
We conclude from \eqref{2.aux3} after summation over $|\alpha|\le s$ and
$i=1,\ldots,n$ that
$$
  \|u(\tau)\|_{H^s(\R^d)}^2 + \big(\sigma_{\rm min}-C\eps^2\|u_0\|_{H^s(\R^d)}^s\big)
	\int_0^\tau\|\na u\|_{H^s(\R^d)}^2\d t 
	\le \|u_0\|_{H^s(\R^d)}^2.
$$
Thus, for sufficiently small $\eps>0$, we arrive at the desired estimate
uniform in $\eta$.

{\em Step 3: Global existence and uniqueness.} We have proved that
$\|u(\tau)\|_{H^s(\R^d)}\le\|u_0\|_{H^s(\R^d)}$ for $0<\tau\le T$ for some
sufficiently small $T>0$. The value for $T$ does not depend on the solution.
Thus, we can use $u(T)$ as an initial datum and solve the equation in
$[T,2T]$. Repeating this argument leads to a global solution. The uniqueness
of a solution follows after standard estimates, based on the global Lipschitz
continuity of $f_\eta$ and $f'_\eta$ (see the calculations for $I_4$, $I_5$, and
$I_6$) and choosing $\eps>0$ sufficiently small.

%%%%%%%%%%%%%%%%%%%%%%%%%%%%%%%%%%%%%%%%%%%%%%%%%%%%%%%%%%%%%%%%%%%%%%%%%%%%%

\section{Proof of Theorem \ref{thm.loc}}\label{sec.loc}

We show the global existence of smooth solutions to the local system \eqref{1.skt}
and an error estimate for the difference of the solutions to \eqref{1.skt} and
\eqref{1.nonloc}, respectively. 
\red{First, we prove that a solution $u_\eta$ to \eqref{1.nonloc} converges
to a solution $u$ to \eqref{1.skt} in a certain sense. Then we prove the
error bound in Theorem \ref{thm.loc} by estimating the difference $u_\eta-u$.
The key of the proof is the estimate of the difference
$f_\eta(B_{ij}^\eta*u_{\eta,j})-f_\eta(a_{ij}u_{\eta,j})$.}

{\em Step 1. Existence and uniqueness of solutions.}
Let $u_\eta$ be a smooth solution to \eqref{1.nonloc} and let
$\phi\in C_0^\infty(\R^d)$ with $\operatorname{supp}(\phi)\subset B_R$,
$\zeta\in C^0([0,T])$ be test functions, where $B_R$ is a ball around the origin
with radius $R>0$. Then the weak formulation of \eqref{1.nonloc} reads as
\begin{equation}\label{3.weak}
\begin{aligned}
  \int_0^T\langle\pa_t u_{\eta.i},\phi\rangle\zeta(t)\d t
	&= -\int_0^T\int_{\R^d} u_{\eta,i}\na U_i\cdot\na\phi\zeta(t)\d x\d t \\
  &\phantom{xx}{}-\int_0^T\int_{\R^d}\big(\sigma_i\na u_{\eta,i} 
	+ \na(u_{\eta,i}K_i(u_\eta))\big)
	\cdot\na\phi\zeta(t)\d x\d t,
\end{aligned}
\end{equation}
where $\langle\cdot,\cdot\rangle$ is the duality pairing between $H^{-1}(\R^d)$ and
$H^1(\R^d)$ and $K_i(u) =\sum_{j=1}^n f_\eta(B_{ij}^\eta*u_j)$. 
We want to perform the limit $\eta\to 0$.
By the uniform estimate of Theorem \ref{thm.nonloc}, there exists a subsequence,
which is not relabeled, such that $u_\eta\rightharpoonup u$ weakly in 
$L^2(0,T;H^{s+1}(\R^d))$ and weakly* in $L^\infty(0,T;H^s(\R^d))\subset 
L^\infty(0,T;L^\infty(\R^d))$ as $\eta\to 0$. Our aim is to prove that
$u$ is a weak solution to \eqref{1.skt}. 

It follows from the proof of Lemma 7 in \cite{CDJ19} that
$$
  B_{ij}^\eta * \na u_{\eta,j}\rightharpoonup a_{ij}\na u_j\quad
	\mbox{weakly in }L^2(0,T;L^2(\R^d)).
$$
We claim that $f_\eta(B_{ij}^\eta*u_{\eta,j})\to f(a_{ij}u_j)$ 
strongly in $L^2(0,T;L^2(B_R))$. 
First, we observe that $u\in L^\infty(0,T;L^\infty(\R^d))$.
The weak formulation \eqref{3.weak} gives
\begin{align*}
  \|\pa_t u_{\eta,i}\|_{L^2(0,T;H^{-1}(B_R))}
	&\le \|u_{\eta,i}\|_{L^2(0,T;L^2(\R^d))}\|\na U_i\|_{L^\infty(B_R)}
	+ \sigma_i\|\na u_{\eta,i}\|_{L^2(0,T;L^2(\R^d))} \\
	&\phantom{xx}{}+ \|\na u_{\eta,i}\|_{L^2(0,T;L^2(\R^d))}
	\|K_i(u_\eta)\|_{L^\infty(0,T;L^\infty(\R^d))} \\
  &\phantom{xx}{}+ \|u_{\eta,i}\|_{L^2(0,T;L^2(\R^d))}
	\|\na K_i(u_\eta)\|_{L^\infty(0,T;L^\infty(\R^d))}.
\end{align*}
Because of
\begin{align*}
  \|K_i(u_\eta)\|_{L^\infty(0,T;L^\infty(\R^d))}
	&\le \sum_{j=1}^n\|f_\eta(B_{ij}^\eta*u_{\eta,j})\|_{L^\infty(0,T;L^\infty(\R^d))}
	\le C\|f\|_{L^\infty(I)}, \\
	\|\na K_i(u_\eta)\|_{L^\infty(0,T;L^\infty(\R^d))}
	&\le \sum_{j=1}^n\|f'_\eta(B_{ij}^\eta*u_{\eta,j})\|_{L^\infty(0,T;L^\infty(\R^d))}
  \|B_{ij}^\eta*\na u_{\eta,j}\|_{L^\infty(0,T;L^\infty(\R^d))} \\
	&\le C\|f'\|_{L^\infty(I)}\|\na u_\eta\|_{L^\infty(0,T;L^\infty(\R^d))}
	\le C\|u_0\|_{H^s(\R^d)},
\end{align*}
we obtain a uniform bound for $\pa_t u_{\eta,i}$ in $L^2(0,T;H^{-1}(B_R))$
(the bound might depend on $R$). In particular, up to a subsequence, as $\eta\to 0$,
$$
  \pa_t u_{\eta,i}\rightharpoonup \pa_t u_i\quad\mbox{weakly in }L^2(0,T;H^{-1}(B_R)).
$$
Since $u_\eta$ is uniformly bounded in
$L^2(0,T;H^1(B_R))$, the Aubin--Lions lemma implies the existence of a subsequence
(not relabeled) such that
$$
  u_{\eta,i}\to u_i \quad\mbox{strongly in }L^2(0,T;L^2(B_R)).
$$
We use the Lipschitz continuity of $f=f_\eta$ on $I$  to infer that
\begin{align*}
  \|f_\eta(&B_{ij}^\eta * u_{\eta,j})-f(a_{ij}u_j)\|_{L^2(0,T;L^2(B_R))} \\
	&\le C\|B_{ij}^\eta*(u_{\eta,j}-u_j) 
	+ B_{ij}^\eta*u_j - a_{ij}u_j\|_{L^2(0,T;L^2(B_R))} \\
	&\le C\|B_{ij}^\eta\|_{L^1(\R^d)}\|u_{\eta,j}-u_j\|_{L^2(0,T;L^2(B_R))}
	+ \|B_{ij}^\eta*u_j-a_{ij}u_j\|_{L^2(0,T;L^2(B_R))} \to 0.
\end{align*}
This shows the claim. In a similar way, it follows from the Lipschitz continuity
of $f'_\eta$ that $f'_\eta(B_{ij}^\eta*u_{\eta,j})\to f'(a_{ij}u_j)$ 
strongly in $L^2(0,T;L^2(B_R))$.

The previous convergences allow us to perform the limit $\eta\to 0$ in \eqref{3.weak},
leading to
$$
  \int_0^T\langle\pa_t u_i,\phi\rangle\zeta(t)\d t
	= -\int_0^T\int_{\R^d}u_i\na U_i\cdot\na\phi\zeta(t)\d x\d t
	- \int_0^T\int_{\R^d}\na F_i(u)\cdot\na\phi\zeta(t)\d x\d t,
$$
where $F_i(u)=u_i(\sigma_i+\sum_{j=1}^n f(a_{ij}u_j))$. 
Moreover, $u_{i}(0)=u_{0,i}$ in $B_R$
for any $R>0$. Thus, $u$ is a weak solution to \eqref{1.skt}. Standard estimates
show that $u$ is the unique solution, again choosing $\eps>0$ sufficiently small.

{\em Step 2: Convergence rate.} We take the difference of \eqref{1.nonloc} and
\eqref{1.skt}, multiply the resulting equation by $u_{\eta,i}-u_i$, integrate
over $(0,\tau)\times\R^d$ for any $\tau>0$, and integrate by parts:
\begin{align}
  \frac12\int_{\R^d}&(u_{\eta,i}-u_i)(\tau)^2\d x
	+ \sigma_i\int_0^\tau\int_{\R^d}|\na(u_{\eta,i}-u_i)|^2\d x\d t
	= \frac12\int_0^\tau\int_{\R^d}\Delta U_i(u_{\eta,i}-u_i)^2\d x\d t \nonumber \\
	&{}- \int_0^\tau\int_{\R^d}\na\sum_{j=1}^n\big(u_{\eta,i}
	f_\eta(B_{ij}^\eta*u_{\eta,j})
	- u_if(a_{ij}u_j)\big)\cdot\na(u_{\eta,i}-u_i)\d x\d t. \label{3.aux0}
\end{align}
The first integral on the right-hand side is nonpositive since $\Delta U_i=-d$.
We split the second integral into three parts:
\begin{equation}\label{3.aux}
  -\int_0^\tau\int_{\R^d}\sum_{j=1}^n\na\big(u_{\eta,i}f_\eta(B_{ij}^\eta*u_{\eta,j})
	- u_if(a_{ij}u_j)\big)\cdot\na(u_{\eta,i}-u_i)\d x\d t
	= J_1 + J_2 + J_3,
\end{equation}
where
\begin{align*}
  J_1 &= -\int_0^\tau\int_{\R^d}\sum_{j=1}^n\na\big((u_{\eta,i}-u_i)
	f_\eta(B_{ij}^\eta*u_{\eta,j})\big)\cdot\na(u_{\eta,i}-u_i)\d x\d t, \\
	J_2 &= -\int_0^\tau\int_{\R^d}\sum_{j=1}^n\na\big(u_i\big(
	f_\eta(B_{ij}^\eta*u_{\eta,j})
	- f_\eta(a_{ij}u_{\eta,j})\big)\big)\cdot\na(u_{\eta,i}-u_i)\d x\d t, \\
  J_3 &= -\int_0^\tau\int_{\R^d}\sum_{j=1}^n\na\big(u_i\big(f_\eta(a_{ij}u_{\eta,j})
	- f(a_{ij}u_j)\big)\big)\cdot\na(u_{\eta,i}-u_i)\d x\d t.
\end{align*}

We start with the estimate of $J_1$. The families $(B_{ij}^\eta*u_{\eta,j})$ and
$(B_{ij}^\eta*\na u_{\eta,j})$ are bounded in $L^\infty(0,T;L^\infty(\R^d))$.
Using $\|f_\eta\|_{L^\infty(I)}=\|f\|_{L^\infty(I)}\le\eps$ and Young's inequality,
we have
\begin{align}
  J_1 &\le \|f_\eta(B_{ij}^\eta*u_{\eta,j})\|_{L^\infty(0,T;L^\infty(\R^d))}
	\int_0^\tau\|\na(u_{\eta,i}-u_i)\|_{L^2(\R^d)}^2\d t \nonumber \\
	&\phantom{xx}{}
	+ \int_0^\tau\|u_{\eta,i}-u_i\|_{L^2(\R^d)}\|f'_\eta(B_{ij}^\eta*u_{\eta,j})
	\|_{L^\infty(0,T;L^\infty(\R^d))} \nonumber \\
	&\phantom{xxxx}{}\times\|B_{ij}^\eta*\na u_{\eta,j}\|_{L^\infty(0,T;L^\infty(\R^d))} 
  \|\na(u_{\eta,i}-u_i)\|_{L^2(\R^d)}\d t \nonumber \\
	&\le \bigg(\frac{\sigma_i}{4}+\eps\bigg)\int_0^\tau
	\|\na(u_{\eta,i}-u_i)\|_{L^2(\R^d)}^2\d t
	+ C(\sigma_i)\int_0^\tau\|u_{\eta,i}-u_i\|_{L^2(\R^d)}^2\d t. \label{3.J1}
\end{align}

Next, we estimate $J_2=J_{21}+J_{22}$, where
\begin{align*}
  J_{21} &= -\int_0^\tau\int_{\R^d}\na u_i\sum_{j=1}^n
	\big(f_\eta(B_{ij}^\eta*u_{\eta,j})-f_\eta(a_{ij}u_{\eta,j})\big)
	\cdot\na(u_{\eta,i}-u_i)\d x\d t, \\
  J_{22} &= -\int_0^\tau\int_{\R^d}u_i\sum_{j=1}^n\big(f'_\eta(B_{ij}^\eta*u_{\eta,j})
	B_{ij}^\eta*\na u_{\eta,j}
	- f'_\eta(a_{ij}u_{\eta,j})a_{ij}\na u_{\eta,j}\big)\cdot\na(u_{\eta,i}-u_i)\d x\d t.
\end{align*}
It follows that
\begin{align*}
  J_{21} &\le \|\na u_i\|_{L^\infty(0,T;L^\infty(\R^d))}
	\sum_{j=1}^n\int_0^\tau\|f_\eta(B_{ij}^\eta*u_{\eta,j})-f_\eta(a_{ij}u_{\eta,j})
	\|_{L^2(\R^d)}
  \|\na(u_{\eta,i}-u_i)\|_{L^2(\R^d)}\d t \\
	&\le \frac{\sigma_i}{8}\int_0^\tau\|\na(u_{\eta,i}-u_i)\|_{L^2(\R^d)}^2\d t
	+ C\sum_{j=1}^n\int_0^\tau\|f_\eta(B_{ij}^\eta*u_{\eta,j})-f_\eta(a_{ij}u_{\eta,j})
	\|^2_{L^2(\R^d)}\d t.
\end{align*}
Since both $B_{ij}^\eta*u_{\eta,j}$ and $u_{\eta,j}$ are uniformly bounded in
$L^\infty(0,T;L^\infty(\R^d))$, we can choose $\eta>0$ sufficiently small
such that $f=f_\eta$ on $I$. On that interval, $f$ is
Lipschitz continuous uniformly in $\eta$. We use this information in 
$$
  \bigg|\int_{\R^d}\big(f_\eta(B_{ij}^\eta*u_{\eta,j})-f_\eta(a_{ij}u_{\eta,j})
	\big)g(x)\d x\bigg|
	\le C\int_{\R^d}\big|B_{ij}^\eta*u_{\eta,j}-a_{ij}u_{\eta,j}\big||g(x)|\d x,
$$
where $g\in L^2(\R^d)$. Recalling that $\operatorname{supp}(B_{ij}^\eta)\subset
B_\eta(0)$ and $a_{ij}=\int_{B_\eta}B_{ij}^\eta\d x$, we obtain
\begin{align*}
  \bigg|\int_{\R^d}&\big(f_\eta(B_{ij}^\eta*u_{\eta,j})-f_\eta(a_{ij}u_{\eta,j})
	\big)g(x)\d x\bigg| \\
	&\le C\int_{\R^d}\bigg|\int_{B_\eta}B_{ij}^\eta(y)\big(u_{\eta,j}(x-y)-u_{\eta,j}(x)
	\big)\d y\bigg||g(x)|\d x \\
	&\le C\int_{\R^d}\int_{B_\eta}|B_{ij}^\eta(y)|\bigg(
	\int_0^1|\na u_{\eta,j}(x-ry)|\eta\d r\bigg)\d y|g(x)|\d x \\
	&= C\eta\int_0^1\int_{B_\eta}|B_{ij}^\eta(y)|\bigg(\int_{\R^d}|\na u_{\eta,j}(x-ry)|
	|g(x)|\d x\bigg)\d y\d r \\
	&\le C\eta\int_0^1\int_{B_\eta}|B_{ij}^\eta(y)|
	\|\na u_{\eta,j}(\cdot-ry)\|_{L^2(\R^d)}\|g\|_{L^2(\R^d)}\d y\d r \\
	&\le C\eta\int_{B_\eta}|B_{ij}^\eta(y)|\d y\|\na u_{\eta,j}\|_{L^2(\R^d)}
	\|g\|_{L^2(\R^d)} \le C\eta\|g\|_{L^2(\R^d)}.
\end{align*}
By duality, we find that
$$
  J_{21}\le \frac{\sigma_i}{8}\int_0^\tau\|\na(u_{\eta,i}-u_i)\|_{L^2(\R^d)}^2\d t
	+ C\eta^2.
$$

The integral $J_{22}$ is split into $J_{22}=J_{221}+J_{222}$, where
\begin{align*}
  J_{221} &= -\int_0^\tau\int_{\R^d}u_i\sum_{j=1}^n f'_\eta(B_{ij}^\eta*u_{\eta,j})
	\big(B_{ij}^\eta*\na u_{\eta,j} - a_{ij}\na u_{\eta,j}\big)
	\cdot\na(u_{\eta,i}-u_i)\d x\d t, \\
  J_{222} &= -\int_0^\tau\int_{\R^d}u_i\sum_{j=1}^n\big(f'_\eta(B_{ij}^\eta*u_{\eta,j})
	- f'_\eta(a_{ij}u_{\eta,j})\big)a_{ij}\na u_{\eta,j}
	\cdot\na(u_{\eta,i}-u_i)\d x\d t.
\end{align*}
We infer from the uniform boundedness of $B_{ij}^\eta*u_{\eta,j}$ in 
$L^\infty(0,T;L^\infty(\R^d))$ and the fact that $f'_\eta=f'$ on $I$
for sufficiently small $\eta>0$ that
\begin{align*}
  J_{221} &\le \frac{\sigma_i}{16}\int_0^\tau\|\na(u_{\eta,i}-u_i)\|_{L^2(\R^d)}^2\d t
	+ C\red{\sum_{j=1}^n}\int_0^T\|B_{ij}^\eta*\na u_{\eta,j} - a_{ij}\na u_{\eta,j}
	\|_{L^2(\R^d)}^2\d t \\
	&\le \frac{\sigma_i}{16}\int_0^\tau\|\na(u_{\eta,i}-u_i)\|_{L^2(\R^d)}^2\d t
	+ C\eta^2\red{\sum_{j=1}^n}\int_0^\tau\|D^2 u_{\eta,j}\|_{L^2(\R^d)}^2\d t,
\end{align*}
where we estimated the difference $B_{ij}^\eta*\na u_{\eta,j} - a_{ij}\na u_{\eta,j}$
similarly as for $J_{21}$. Furthermore, the Lipschitz continuity of $f'_\eta=f'$ on $I$
leads to
\begin{align*}
  J_{222} &\le C\red{\sum_{j=1}^n}
	\int_0^\tau\|u_i\|_{L^\infty(\R^d)}\|B_{ij}^\eta*u_{\eta,j}
	- a_{ij}u_{\eta,j}\|_{L^2(\R^d)}\|\na u_{\eta,j}\|_{L^\infty(\R^d)}
  \|\na(u_{\eta,i}-u_i)\|_{L^2(\R^d)}\d t \\
	&\le \frac{\sigma_i}{16}\int_0^\tau\|\na(u_{\eta,i}-u_i)\|_{L^2(\R^d)}^2\d t
	+ C\eta^2\red{\sum_{j=1}^n}\int_0^\tau\|\na u_{\eta,j}\|_{L^2(\R^d)}^2\d t.
\end{align*}
Summarizing these estimates, we infer that
$$
  J_{22} \le \frac{\sigma_i}{8}\int_0^\tau\|\na(u_{\eta,i}-u_i)\|_{L^2(\R^d)}^2\d t
  + C\eta^2,
$$
and combining the estimate for $J_{21}$ and $J_{22}$,
\begin{equation}\label{3.J2}
  J_2 \le \frac{\sigma_i}{4}\int_0^\tau\|\na(u_{\eta,i}-u_i)\|_{L^2(\R^d)}^2\d t
  + C\eta^2.
\end{equation}

It remains to estimate $J_3=J_{31}+J_{32}$, where
\begin{align*}
  J_{31} &= -\int_0^\tau\int_{\R^d}\sum_{j=1}^n\big(f_\eta(a_{ij}u_{\eta,j})
	- f(a_{ij}u_j)\big)\na u_i\cdot\na(u_{\eta,i}-u_i)\d x\d t, \\
  J_{32} &= -\int_0^\tau\int_{\R^d}u_i\sum_{j=1}^n\big(f'_\eta(a_{ij}u_{\eta,j})
	a_{ij}\na u_{\eta,j} - f'(a_{ij}u_j)a_{ij}\na u_j\big)
	\cdot\na(u_{\eta,i}-u_i)\d x\d t.
\end{align*}
Similar arguments as above yield
\begin{align*}
  J_{31} &\le \frac{\sigma_i}{8}\int_0^\tau\|\na(u_{\eta,i}-u_i)\|_{L^2(\R^d)}^2\d t
	+ C\int_0^\tau\|\na u_i\|_{L^\infty(\R^d)}^2\|u_{\eta}-u\|_{L^2(\R^d)}^2\d t \\
	&\le \frac{\sigma_i}{8}\int_0^\tau\|\na(u_{\eta,i}-u_i)\|_{L^2(\R^d)}^2\d t
  + C\int_0^\tau\|u_{\eta}-u\|_{L^2(\R^d)}^2\d t.
\end{align*}
The second term $J_{32}$ is again split into two parts, $J_{32}=J_{321}+J_{322}$, where
\begin{align*}
  J_{321} &= -\int_0^\tau\int_{\R^d}u_i\sum_{j=1}^n \big(f'_\eta(a_{ij}u_{\eta,j})
	- f'_\eta(a_{ij}u_j)\big)a_{ij}\na u_{\eta,j}\cdot\na(u_{\eta,i}-u_i)\d x\d t, \\
  J_{322} &= -\int_0^\tau\int_{\R^d}u_i\sum_{j=1}^n a_{ij}\big(f'_\eta(a_{ij}u_j)
	\na u_{\eta,j} - f'(a_{ij}u_j)\na u_j\big)\cdot\na(u_{\eta,i}-u_i)\d x\d t.
\end{align*}
Using the Lipschitz continuity again, $f'_\eta=f'$ on $I$, and $|f'|\le\eps$, 
we deduce that
\begin{align*}
  J_{321} &\le C\|u_i\|_{L^\infty(0,T;L^\infty(\R^d)}\int_0^\tau\sum_{j=1}^n
	\|\na u_{\eta,j}\|_{L^\infty(\R^d)}
	\|u_{\eta,j}-u_j\|_{L^2(\R^d)}\|\na(u_{\eta,i}-u_i)\|_{L^2(\R^d)}\d t \\
  &\le \frac{\sigma_i}{8}\int_0^\tau\|\na(u_{\eta,i}-u_i)\|_{L^2(\R^d)}^2\d t
	+ C\int_0^\tau\|u_\eta-u\|_{L^2(\R^d)}^2, \\
	J_{322} &\le C\int_0^\tau\sum_{j=1}^n
	\|f'(a_{ij}u_j)\|_{L^\infty(\R^d)}\|\na(u_{\eta,j}-u_j)\|_{L^2(\R^d)}
	\|\na(u_{\eta,i}-u_i)\|_{L^2(\R^d)}\d t \\
	&\le C\eps\int_0^\tau\|\na(u_\eta-u)\|_{L^2(\R^d)}^2\d t.
\end{align*}
This shows that
$$
  J_{32} \le \bigg(\frac{\sigma_i}{8}+C\eps\bigg)
	\int_0^\tau\|\na(u_{\eta,i}-u_i)\|_{L^2(\R^d)}^2\d t
  + C\int_0^\tau\|u_\eta-u\|_{L^2(\R^d)}^2.
$$
Summarizing the estimate for $J_{31}$ and $J_{32}$, we arrive at
\begin{equation}\label{3.J3}
  J_3 \le \bigg(\frac{\sigma_i}{4}+C\eps\bigg)
	\int_0^\tau\|\na(u_{\eta,i}-u_i)\|_{L^2(\R^d)}^2\d t
  + C\int_0^\tau\|u_\eta-u\|_{L^2(\R^d)}^2\d t.
\end{equation}

Finally, putting together the estimates \eqref{3.J1}, \eqref{3.J2}, and \eqref{3.J3},
we infer from \eqref{3.aux} that
\begin{align*}
  \bigg|\int_0^\tau & \int_{\R^d}\sum_{j=1}^n\na\big(u_{\eta,i}
	f_\eta(B_{ij}^\eta*u_{\eta,j})
	- u_if(a_{ij}u_j)\big)\cdot\na(u_{\eta,i}-u_i)\d x\d t\bigg| \\
  &\le \bigg(\frac{3\sigma_i}{4} + C\eps\bigg)\int_0^\tau
	\|\na(u_{\eta,j}-u_j)\|_{L^2(\R^d)}^2\d t
	+ C\int_0^\tau\|u_{\eta}-u\|_{L^2(\R^d)}^2\d t + C\eta^2.
\end{align*}
This is the desired estimate for the last integral in \eqref{3.aux0}. We conclude 
for sufficiently small $\eps>0$ and after summation over $i=1,\ldots,n$ that
$$
  \|(u_\eta-u)(\tau)\|_{L^2(\R^d)}^2 + \sigma_{\rm min}C\int_0^\tau
	\|\na(u_\eta-u)\|_{L^2(\R^d)}^2\d t
	\le C\int_0^\tau\|u_{\eta}-u\|_{L^2(\R^d)}^2\d t + C\eta^2.
$$
The proof ends after applying Gronwall's inequality.

%%%%%%%%%%%%%%%%%%%%%%%%%%%%%%%%%%%%%%%%%%%%%%%%%%%%%%%%%%%%%%%%%%%%%%%%%%%%%

\section{Links between the SDEs and PDEs}\label{sec.link}

We show that the density function $\widehat{u}$ from Proposition \ref{prop.ex} 
coincides with the unique weak solution $u$ to \eqref{1.skt}.

\begin{theorem}%\label{thm.link}
Let the assumptions of Theorem \ref{thm.loc} hold. Let $\widehat{X}_i$ for
$i=1,\ldots,n$ be the square-integrable process solving \eqref{1.ml} with 
density function $\widehat{u}_i$ and let $u_i$ be the unique weak solution to 
\eqref{1.skt}. Then $\widehat{u}=(\widehat u_1,\ldots,\widehat u_n)$ 
solves the linear equation
\begin{equation}\label{4.lin}
  \pa_t\widehat{u}_i = \diver(\widehat{u}_i\na U_i)
	+ \Delta\bigg(\sigma_i\widehat{u}_i + \widehat{u}_i\sum_{j=1}^n f(a_{ij}u_j)\bigg)
	\quad\mbox{in }\R^d,\ i=1,\ldots,n,
\end{equation}
in the weak integrable sense, i.e.
\begin{align*}
  \int_{\R^d}&\widehat{u}_i(t)\phi(t)\d x - \int_{\R^d}u_{0,i}\phi(0)\d x
	- \int_0^t\int_{\R^d}\widehat{u}_i\pa_t\phi\d x\d s \\
	&= -\int_0^t\int_{\R^d}\widehat{u}_i\na U_i\cdot\na\phi\d x\d t
	+ \int_0^t\int_{\R^d}\widehat{u}_i\bigg(\sigma_i+\sum_{j=1}^n f(a_{ij}u_j)\bigg)
	\Delta\phi\d x\d s
\end{align*}
for all $\phi\in C_0^\infty([0,\infty)\times\R^d)$ and $t>0$, 
where we assume that the initial datum $\widehat{u}_i(0)=u_{0,i}$ fulfils
\begin{equation}\label{4.condIni}
 \int_{\R^d}u_{0,i}(x)\d x = 1, \quad \int_{\R^d}u_{0,i}(x)|x|^2\d x < \infty.
\end{equation}
Additionally, $\widehat{u}=u$ in $(0,\infty)\times\R^d$, $u_i\ge 0$, and 
\eqref{4.condIni} is fulfilled \red{for $u_i$ instead of $u_{0,i}$} 
for almost all $t>0$ and all $i=1,\ldots,n$.
\end{theorem}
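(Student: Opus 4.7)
The plan is to derive the weak formulation \eqref{4.lin} for $\widehat{u}_i$ via It\^o's formula applied to the SDE \eqref{1.ml}, and then to identify $\widehat{u}$ with $u$ using uniqueness of solutions to the linear parabolic equation with prescribed coefficients coming from the already-constructed SKT solution $u$.

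First, I would fix $\phi\in C_0^\infty([0,\infty)\times\R^d)$ and apply It\^o's formula to $\phi(t,\widehat{X}_{k,i}(t))$. Since the diffusion matrix associated with \eqref{1.ml} is $(2\sigma_i+2\sum_j f(a_{ij}u_j(\widehat{X}_{k,i})))I_d$, It\^o's formula yields
\begin{align*}
  \phi(t,\widehat{X}_{k,i}(t)) - \phi(0,\xi_i^k)
  &= \int_0^t\Big(\pa_s\phi - \na U_i(\widehat{X}_{k,i})\cdot\na\phi \\
  &\quad + \Big(\sigma_i+\sum_{j=1}^n f(a_{ij}u_j(\widehat{X}_{k,i}))\Big)\Delta\phi\Big)\d s + M_t,
\end{align*}
where $M_t$ is the stochastic integral against $W_i^k$. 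Since $\phi$ has compact support in $x$ and $u_j$ is bounded (by Theorem \ref{thm.loc}), the integrand of $M_t$ is uniformly bounded, so $M_t$ is a true martingale with $\E M_t=0$. Taking expectations on both sides, using the fact that $\widehat{X}_{k,i}(t)$ has density $\widehat{u}_i(t,\cdot)$ and $\xi_i^k$ has density $u_{0,i}$, transforms every $\E[\psi(\widehat{X}_{k,i}(s))]$ into $\int_{\R^d}\widehat{u}_i(s,x)\psi(x)\,\d x$. This yields precisely the weak integral formulation stated in the theorem.

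Next, I would verify \eqref{4.condIni}. Mass conservation $\int\widehat{u}_i(t)\d x=1$ follows from choosing cutoff functions $\phi_R\uparrow 1$ in the weak formulation, or directly from $\widehat{u}_i(t,\cdot)$ being a probability density. For the second moment, apply It\^o's formula to $|\widehat{X}_{k,i}|^2$: with $\na U_i(x)=-x$, one gets $\d|\widehat{X}_{k,i}|^2 = (2|\widehat{X}_{k,i}|^2 + 2d(\sigma_i+\sum_jf(a_{ij}u_j)))\,\d t + \text{mart.}$, and Gr\"onwall's inequality together with $\E|\xi_i^k|^2<\infty$ and the uniform bound on $f(a_{ij}u_j)$ yields $\E|\widehat{X}_{k,i}(t)|^2<\infty$ for all $t>0$.

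Finally, I would use uniqueness for \eqref{4.lin}. The coefficient $g_i(t,x):=\sigma_i+\sum_j f(a_{ij}u_j(t,x))$ is bounded from below by $\sigma_i>0$ and, thanks to Theorem \ref{thm.loc} and the smoothness/boundedness of $f$ on the interval $I$, is smooth and bounded together with its spatial derivatives. Moreover, $u$ from Theorem \ref{thm.loc} itself solves \eqref{4.lin} (this is just \eqref{1.skt} rewritten), and it is a probability density with finite second moment (by an analogous moment argument applied to the SKT equation and conservation of $\int u_i\d x$). Both $\widehat{u}_i$ and $u_i$ therefore solve the same linear Fokker--Planck equation with identical initial data in the class of weak solutions with finite second moments. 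The difference $w_i=\widehat{u}_i-u_i$ satisfies a linear equation with smooth bounded coefficients; a standard energy estimate (multiply the equation for $w_i$ in its adjoint form by $w_i$, integrate by parts, use $\diver\na U_i=-d\le 0$ and the boundedness of $\na g_i$) combined with Gr\"onwall gives $w_i\equiv 0$. This yields $\widehat{u}=u$, and non-negativity of $u$ is inherited from the probabilistic representation. The main obstacle is handling the unbounded drift $-x$ in the uniqueness argument, which forces one to work with weighted estimates or to exploit the dissipative sign $\Delta U_i=-d<0$ when integrating by parts; this is precisely the issue already addressed in the proof of Theorem \ref{thm.nonloc} following \eqref{2.I7}, and I would mimic that splitting to close the energy estimate for $w_i$.
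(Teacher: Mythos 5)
Your derivation of the weak formulation is essentially the paper's: apply It\^o's formula to $\phi(t,\widehat{X}_i(t))$, observe the stochastic integral is a true martingale because $\phi$ is compactly supported and the coefficients are bounded, and take expectations. (The paper additionally justifies the point evaluation $\widehat{u}_i(t)$ by proving $t\mapsto\widehat{u}_i(t)$ is continuous in the $2$-Wasserstein distance, a small but necessary step you omit.)

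The genuine gap is in the uniqueness argument. You propose to multiply the equation for $w_i=\widehat{u}_i-u_i$ by $w_i$, integrate by parts, and apply Gr\"onwall. This requires $w_i\in L^2(0,T;H^1(\R^d))\cap L^\infty(0,T;L^2(\R^d))$, or at least $w_i\in L^2$, for the pairing and the integration by parts to make sense. But the only information you have on $\widehat{u}_i$ comes from Proposition \ref{prop.ex} (via Malliavin calculus): it is a probability density, i.e., a nonnegative element of $L^1(\R^d)$ with finite second moment. No $L^2$ or Sobolev regularity of $\widehat{u}_i$ is available a priori, so the energy estimate is not justified and the argument does not close. Handling the unbounded drift, which you flag as the main obstacle, is actually the lesser issue; the lack of regularity of $\widehat{u}_i$ is what breaks the direct method. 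This is precisely why the paper uses a Holmgren-type duality argument instead: one constructs a smooth solution $w_{k,R}$ of a regularized backward dual problem on $B_R$ (with mollified coefficient $\chi_k*F_i(u)$ and homogeneous Dirichlet data), tests the very weak formulation with $w_{k,R}\psi_R$ for a cut-off $\psi_R$, derives bounds uniform in $k$ and $R$, and passes to the limit to obtain $\int_{\R^d}g\,\widehat{u}_i(s)\,\d x=0$ for all test functions $g$. That route requires of $\widehat{u}_i$ only that it be an integrable very weak solution, which is exactly what the It\^o step delivers. Your approach would be salvageable only if you first upgraded the regularity of $\widehat{u}_i$, which is not provided by the probabilistic construction.
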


\begin{proof}
Since $\widehat{X}_{k,i}$
depends on $k$ only via the initial data $\xi_i^k$ with the same law $u_{0,i}$,
we can omit the index $k$. Let $\phi\in C_0^\infty([0,\infty)\times\R^d)$
and set $F_i(u)=\sigma_i+\sum_{j=1}^n f(a_{ij}u_j)$. By It\^o's lemma, we obtain
\begin{align}
  \phi(&t,\widehat{X}_i(t)) = \phi(0,\xi_i) + \int_0^t\pa_t\phi(s,\widehat{X}_i(s))\d s
	- \int_0^t\na U_i(s)\cdot\na\phi(s,\widehat{X}_i(s))\d s \nonumber \\
	&\phantom{xx}{}
	+ \int_0^t F_i\big(u(\widehat{X}_i(s))\big)\Delta\phi(s,\widehat{X}_i(s))\d s
	+ \int_0^t F_i\big(u(\widehat{X}_i(s))\big)^{1/2}\na\phi(s,X(s))\cdot\d W_i(s).
	\label{4.ito}
\end{align}
We claim that 
the density function $\widehat{u}_i:[0,\infty)\to \mathcal{P}_2(\R^d)$,
where $\mathcal{P}_2(\R^d)$ is the space of all density functions with
finite second moment, is continuous with respect to the 2-Wasserstein distance
$W_2$.
Indeed, since $\widehat{X}_i$ is square-integrable, we have $\widehat{u}_i(t)\in
\mathcal{P}_2(\R^d)$ for almost all $t>0$ and the limit $s\to t$ 
in the Wasserstein distance leads to
\begin{align*}
  W_2(\widehat{u}_i(t),\widehat{u}_i(s))
	&= \inf\big\{\big(\E(|Y_t-Y_s|^2)\big)^{1/2}:\
	\operatorname{Law}(Y_t)=\widehat{u}_i(t),\
	\operatorname{Law}(Y_s)=\widehat{u}_i(s)\big\} \\
	&\le \big(\E(|\widehat{X}_i(t)-\widehat{X}_i(s)|^2)\big)^{1/2}\to 0,
\end{align*}
using the facts that $\widehat{X}_i$ is continuous in time and has bounded second
moments. This shows the claim. We conclude that the point evaluation $\widehat{u}_i(t)$
is well defined.

The previous argumentation shows that we can apply the expectation to \eqref{4.ito}
to obtain
\begin{align*}
  \int_{\R^d}&\widehat{u}_i(t)\phi(t)\d x = \int_{\R^d}u_{0,i}\phi(0)\d x
	+ \int_0^t\int_{\R^d}\widehat{u}_i(s)\pa_t\phi(s)\d x\d s \\
	&{} -\int_0^t\int_{\R^d}\widehat{u}_i(s)\na U_i\cdot\na\phi(s)\d x\d s
	+ \int_0^t\int_{\R^d}\widehat{u}_i(s)F_i(u(s))\Delta \phi(s)\d x\d s.
\end{align*}
This is the very weak formulation of \eqref{4.lin}, showing the first part of
the theorem.

Next, we verify that the solution to \eqref{4.lin} is unique. More precisely,
we take $u_0=0$ and show that $\widehat{u}_i(t)=0$ for almost all $t>0$.
The statement is usually proved by a duality argument. However, the
coefficients of the dual problem associated to \eqref{4.lin} are not regular
enough such that we need to regularize it. As the proof is rather standard but
tedious, we only sketch the arguments. Let $\chi_k$ be a family of mollifiers
and consider the regularized dual backward problem on the ball $B_R$ around the origin
with radius $R>0$:
\begin{align*}
  & \pa_t w_{k,R} - \na U_i\cdot\na w_{k,R} + (\chi_k*F_i(u))\Delta w_{k,R} = 0
	\quad\mbox{in }B_R,\ 0<s<t, \\
  & w_{k,R}=0\quad\mbox{on }\pa B_R, \quad
	w_{k,R}(t)=g\in C_0^\infty(B_R)\quad\mbox{in } B_R.
\end{align*}
We extend the unique smooth solution $w_{k,R}$ to the whole space by 
setting $w_{k,R}=0$ on $\R^d\setminus B_R$. Since the extension may be not
smooth, we choose a cut-off function $\psi_R\in C^\infty(\R^d)$ and use
$w_{k,R}\psi_R$ as an admissible 
test function in the very weak formulation of \eqref{4.lin}.
Standard estimations give bounds for $w_{k,R}$ uniform in $k$ and $R$.
Then, passing to the limit $k\to\infty$, $R\to\infty$ in the weak formulation
shows that $\int_{\R^d}g(x)\widehat u_i(s,x)\d x=0$, and since $g$ was arbitrary,
we conclude that $\widehat{u}_i(s)=0$ for $0<s<t$.

The weak solution $u$ to \eqref{1.skt} is also a very weak solution to \eqref{4.lin}.
Therefore, by the previous uniqueness result, $\widehat{u}=u$. 
\end{proof}

Similar arguments lead to the following result
that relates the solutions $\overline{u}_\eta$ and $u_\eta$.

\begin{theorem}%\label{thm.link2}
Let the assumptions of Theorem \ref{thm.nonloc} hold and let $\eta>0$.
Let $\overline{X}^\eta_{k,i}$ for $i=1,\ldots,n$ and $k=1,\ldots,N$ be the 
square-integrable process solving \eqref{1.il} with density function
$\overline{u}_{\eta,i}$. Then $\overline{u}_{\eta}=(\overline{u}_{\eta,1},\ldots,
\overline{u}_{\eta,n})$ solves the linear problem
\begin{equation*}%\label{4.lin2}
  \pa_t\overline{u}_{\eta,i} = \diver(\overline{u}_{\eta,i}\na U_i)
	+ \Delta\bigg(\sigma_i\overline{u}_{\eta,i} 
	+ \overline{u}_{\eta,i}\sum_{j=1}^n f_\eta(B_{ij}^\eta * u_{\eta,j})\bigg)
	\quad\mbox{in }\R^d,\ i=1,\ldots,n,
\end{equation*}
with initial datum $\overline{u}_{\eta,i}(0)=u_{0,i}$, which fulfils 
\eqref{4.condIni}, where 
$u_{\eta,i}$ is the unique weak solution to \eqref{1.nonloc}.
Then $\overline{u}_\eta=u_\eta$ in $(0,\infty)\times\R^d$, 
$u_{\eta,i}\ge 0$, and
$$
  \int_{\R^d}u_{\eta,i}(x,t)\d x=1, \quad \int_{\R^d}u_{\eta,i}(x,t)|x|^2\d x<\infty
$$
for almost all $t>0$ and all $i=1,\ldots,n$.
\end{theorem}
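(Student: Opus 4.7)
The plan is to mirror the structure of the proof of the preceding theorem, adapting it to the intermediate system \eqref{1.il} and its companion PDE \eqref{1.nonloc}. Since $\overline{X}^\eta_{k,i}$ depends on $k$ only through $\xi_i^k\sim u_{0,i}$, I would drop the index $k$ throughout. For a test function $\phi\in C_0^\infty([0,\infty)\times\R^d)$, I would apply It\^o's formula to $\phi(t,\overline{X}^\eta_i(t))$ using \eqref{1.il}, whose diffusion coefficient is $(2\sigma_i + 2\sum_j f_\eta(B_{ij}^\eta * u_{\eta,j}(\overline{X}^\eta_i(s))))^{1/2}$. Because $u_{\eta,j}$ is bounded in $L^\infty$ by Theorem \ref{thm.nonloc}, the convolution $B_{ij}^\eta * u_{\eta,j}$ is uniformly bounded and $f_\eta$ applied to it is bounded as well; together with the compact support of $\na\phi$, this makes the stochastic integral a true martingale whose expectation vanishes. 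Reading off the resulting identity with $\overline{u}_{\eta,i}(t)=\operatorname{Law}(\overline{X}^\eta_i(t))$ yields the claimed very weak formulation, the point evaluation at time $t$ being justified exactly as in the previous theorem: the square-integrable process $\overline{X}^\eta_i$ has continuous paths, so $t\mapsto\overline{u}_{\eta,i}(t)$ is $W_2$-continuous into $\mathcal{P}_2(\R^d)$.

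For uniqueness of very weak solutions to the resulting linear parabolic equation, I would repeat the duality argument of the preceding theorem. Working on $B_R$, I would consider the regularized backward problem
\begin{equation*}
  \pa_t w_{k,R} - \na U_i\cdot\na w_{k,R}
  + \bigl(\chi_k\ast F_{\eta,i}(u_\eta)\bigr)\Delta w_{k,R} = 0,
\end{equation*}
with homogeneous Dirichlet data and terminal condition $g\in C_0^\infty(B_R)$, where $F_{\eta,i}(u_\eta)=\sigma_i+\sum_j f_\eta(B_{ij}^\eta*u_{\eta,j})$. A simplification compared with the macroscopic case is that, for fixed $\eta>0$, this coefficient is already smooth in $x$ for each $t$ since $B_{ij}^\eta\in C_0^\infty$, so only time-mollification and spatial truncation are genuinely needed. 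Standard parabolic estimates give bounds on $w_{k,R}$ uniform in $k$ and $R$; multiplying $w_{k,R}$ by a cut-off $\psi_R$ to form an admissible test function in the very weak formulation and then letting $k\to\infty$ followed by $R\to\infty$ yields $\int_{\R^d}g\,\overline{u}_{\eta,i}(s)\,\d x=0$ for arbitrary $g\in C_0^\infty(\R^d)$, whence uniqueness for the zero initial datum.

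Since the weak solution $u_{\eta,i}$ of Theorem \ref{thm.nonloc} is itself a very weak solution of the same linear PDE (with the very coefficient $F_{\eta,i}(u_\eta)$), uniqueness forces $\overline{u}_\eta=u_\eta$. Nonnegativity then transfers from the probabilistic side to $u_\eta$, mass conservation follows from testing against a smooth cut-off approximation of $1$ (the boundary contributions arising from the drift $-x\cdot\na u_{\eta,i}-d\,u_{\eta,i}$ are controlled by the finite second moment $\E|\overline{X}^\eta_i(t)|^2<\infty$ and vanish in the limit), and the second-moment bound transfers directly from the SDE. The main obstacle is the interplay between the unbounded drift $\na U_i(x)=-x$ and the spatial cut-off $\psi_R$ in the duality step: the error terms produced by $\psi_R$ must be absorbed against the finite second moment of $\overline{u}_{\eta,i}$ and shown to vanish as $R\to\infty$, which requires calibrating the decay of $\psi_R$ carefully.
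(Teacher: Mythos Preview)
Your proposal is correct and follows exactly the approach the paper intends: the paper gives no separate proof for this theorem, stating only that ``similar arguments lead to the following result,'' i.e., one repeats verbatim the It\^o-formula/very-weak-formulation step and the duality uniqueness argument from the preceding theorem with $F_i(u)$ replaced by $F_{\eta,i}(u_\eta)=\sigma_i+\sum_j f_\eta(B_{ij}^\eta*u_{\eta,j})$. Your additional remark that the coefficient is already $C^\infty$ in $x$ for fixed $\eta$ (so that less mollification is required) is a valid simplification not spelled out in the paper.
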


%%%%%%%%%%%%%%%%%%%%%%%%%%%%%%%%%%%%%%%%%%%%%%%%%%%%%%%%%%%%%%%%%%%%%%%%%%

\section{Proof of Theorem \ref{thm.conv}}\label{sec.conv}

The proof is split into two parts. We estimate first the square mean error
of the difference $X_{k,i}^{N,\eta}-\overline{X}_{k,i}^\eta$, where 
$\overline{X}_{k,i}^\eta$ is the solution to the intermediate system \eqref{1.il}.
\red{In fact, this error bound is a generalization of a result due to \cite{Szn91}.
Essential for this step are the facts that the Lipschitz constant of
$B_{ij}^\eta$ is of order $\eta^{-d-1}$, while the Lipschitz constant of $f_\eta$
is of order $\eta^{-\alpha}$. Second, we estimate the square mean error 
of the difference $\overline{X}_{k,i}^\eta-\widehat{X}_{k,i}$, based on 
an estimate of $f_\eta(B_{ij}^\eta*u_j)-f_\eta(a_{ij}u_j)$ in $L^2$, 
which is of the order of $\eta^{1-\alpha}$.}

\begin{lemma}\label{lem.conv1}
Let $X_{k,i}^{N,\eta}$ and $\overline{X}_{k,i}^\eta$ be the solutions to
\eqref{1.pl} and \eqref{1.ml}, respectively, in the sense of
Proposition \ref{prop.ex}. Under the assumptions of Theorem \ref{thm.conv}, 
there exists $\delta>0$, depending on $n$, $\sigma_{\rm min}$, and $T$, such that
if $\eta^{-2(d+1+\alpha)}\le \delta\log N$, where $\alpha\ge 0$ is fixed in 
Assumption {\rm (A4)}, we have
$$
  \sup_{k=1,\ldots,N}\E\bigg(\sum_{i=1}^n\sup_{0<s<T}
	\big|(X_{k,i}^{N,\eta}-\overline{X}_{k,i}^\eta)(s)\big|^2\bigg)
	\le C(T,n,\sigma_{\rm min})N^{-1+(T+1)C(n,\red{T},\sigma_{\rm min})\delta},
$$
where $C(T,n,\sigma_{\rm min})>0$ is a positive constant.
\end{lemma}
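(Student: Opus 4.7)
The plan is to follow Sznitman's classical coupling strategy. Since \eqref{1.pl} and \eqref{1.il} share the Brownian motions $W_i^k$ and the initial data $\xi_i^k$, the difference $\Delta_{k,i}(t):=X_{k,i}^{N,\eta}(t)-\overline{X}_{k,i}^\eta(t)$ starts at zero and evolves with a Lipschitz drift (indeed $-\na U_i(x)=x$ has Lipschitz constant $1$) and a stochastic integral whose integrand is $\Sigma_{k,i}^{N,\eta}(s)-\overline{\Sigma}_{k,i}^\eta(s)$, where $\Sigma$ denotes the scalar diffusion coefficient of the respective SDE. Cauchy--Schwarz on the drift integral and the Burkholder--Davis--Gundy inequality on the martingale give
\[
  \E\sup_{s\le t}|\Delta_{k,i}(s)|^2
    \le 2T\int_0^t\E|\Delta_{k,i}(s)|^2\,\d s
    + C\int_0^t\E\big|\Sigma_{k,i}^{N,\eta}(s)-\overline{\Sigma}_{k,i}^\eta(s)\big|^2\,\d s.
\]

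Since both diffusion coefficients are bounded below by $\sqrt{2\sigma_{\rm min}}$, the elementary bound $|\sqrt{a}-\sqrt{b}|\le|a-b|/(2\sqrt{2\sigma_{\rm min}})$ combined with $\operatorname{Lip}(f_\eta)\le\eta^{-\alpha}$ reduces the diffusion integrand to
\[
  \big|\Sigma_{k,i}^{N,\eta}-\overline{\Sigma}_{k,i}^\eta\big|^2
    \le C(n,\sigma_{\rm min})\eta^{-2\alpha}\sum_{j=1}^n|D_{ij}|^2,\qquad
  D_{ij} := \frac{1}{N}\!\!\!\!\sum_{\substack{\ell=1\\(\ell,j)\neq(k,i)}}^N\!\!\!\!
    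B_{ij}^\eta(X_{k,i}^{N,\eta}-X_{\ell,j}^{N,\eta})
    - (B_{ij}^\eta * u_{\eta,j})(\overline{X}_{k,i}^\eta).
\]
I would then decompose $D_{ij}=T_1+T_2+O(1/N)$. The term $T_1$ replaces $X^{N,\eta}$ by $\overline{X}^\eta$ inside $B_{ij}^\eta$; using $\|\na B_{ij}^\eta\|_\infty\le C\eta^{-d-1}$,
\[
  |T_1(s)|\le C\eta^{-d-1}\Big(|\Delta_{k,i}(s)|+\frac{1}{N}\sum_{\ell=1}^N|\Delta_{\ell,j}(s)|\Big).
\]
The term $T_2 := \frac{1}{N}\sum_\ell B_{ij}^\eta(\overline{X}_{k,i}^\eta-\overline{X}_{\ell,j}^\eta) - (B_{ij}^\eta * u_{\eta,j})(\overline{X}_{k,i}^\eta)$ is a law-of-large-numbers fluctuation: conditioning on $\overline{X}_{k,i}^\eta$ and invoking the identification $\overline{u}_{\eta,j}=u_{\eta,j}$ from Section~\ref{sec.link}, the summands are conditionally iid with conditional mean $(B_{ij}^\eta * u_{\eta,j})(\overline{X}_{k,i}^\eta)$ and pointwise bound $\|B_{ij}^\eta\|_\infty\le C\eta^{-d}$, whence $\E|T_2(s)|^2\le C\eta^{-2d}/N$. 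The $O(1/N)$ term accounts for the exclusion $(\ell,j)\neq(k,i)$.

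Inserting these bounds, using exchangeability to identify $N^{-1}\sum_\ell\E|\Delta_{\ell,j}(s)|^2=\E|\Delta_{1,j}(s)|^2$, summing over $i$ and taking $\sup_k$, the quantity $\Phi(t):=\sum_{i=1}^n\sup_{k}\E\sup_{s\le t}|\Delta_{k,i}(s)|^2$ satisfies
\[
  \Phi(t)\le C(n,\sigma_{\rm min})\eta^{-2(d+1+\alpha)}\int_0^t\Phi(s)\,\d s
    + \frac{C(T,n,\sigma_{\rm min})\eta^{-2(d+\alpha)}}{N}.
\]
Gronwall's lemma then gives $\Phi(T)\le C\eta^{-2(d+\alpha)}N^{-1}\exp(C(n,\sigma_{\rm min})T\eta^{-2(d+1+\alpha)})$. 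Under the scaling $\eta^{-2(d+1+\alpha)}\le\delta\log N$ the exponential becomes $N^{C(n,\sigma_{\rm min})T\delta}$, and since $\eta^{-2(d+\alpha)}\le\eta^{-2(d+1+\alpha)}\le\delta\log N$ for $\eta\le 1$, the $\log N$ factor can be absorbed into an additional factor $N^{C\delta}$, yielding the stated bound.

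The main obstacle I anticipate is keeping the implicit constants independent of $\eta$ and $N$. The exponent $d+1+\alpha$ in \eqref{1.scaling.deriv} arises precisely from the combination of $\eta^{-2\alpha}$ (cost of the $f_\eta$ approximation), $\eta^{-2(d+1)}$ (cost of the Lipschitz estimate for $T_1$), and $\eta^{-2d}/N$ (cost of the $T_2$ fluctuation); the scaling is essentially sharp, and any looseness would destroy the $N^{-1}$ rate. This is the point where the lower bound $\Sigma\ge\sqrt{2\sigma_{\rm min}}$ on both diffusion coefficients and the uniform $H^s$-bound on $u_\eta$ from Theorem \ref{thm.nonloc} (which guarantees $B_{ij}^\eta * u_{\eta,j}$ takes values in the interval $I$ where $f_\eta=f$ is Lipschitz with a constant uniform in $\eta$) become crucial.
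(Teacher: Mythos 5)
Your proposal is correct and follows essentially the same route as the paper's proof: Sznitman coupling, Burkholder--Davis--Gundy plus the Lipschitz bound $|\sqrt{2\sigma_i+a}-\sqrt{2\sigma_i+b}|\le C(\sigma_{\rm min})|a-b|$ for the diffusion coefficients, decomposition of the difference into a Lipschitz-in-$X$ part and a law-of-large-numbers fluctuation, a variance estimate of order $\eta^{-2(d+\alpha)}/N$ for the latter, and Gronwall with the scaling relation. The only cosmetic differences are that you merge the paper's two intermediate Lipschitz terms $L^1_j, L^2_j$ into a single $T_1$, and you bound the fluctuation variance via the pointwise bound $\|B_{ij}^\eta\|_\infty\lesssim\eta^{-d}$ and conditional independence, whereas the paper uses orthogonality of the $D_{(k,i),(\ell,j)}$ and the $L^2$-bound $\|B_{ij}^\eta\|_{L^2}^2\lesssim\eta^{-d}$ (with a separate treatment of the diagonal $(\ell,j)=(k,i)$ term, which produces the dominant $\eta^{-2d}$ factor) --- both give the same order.
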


\begin{proof}
The process $D_{k,i}^{N,\eta}:=X_{k,i}^{N,\eta}-\overline{X}_{k,i}^\eta$ solves
\begin{equation}\label{4.D}
  D_{k,i}^{N,\eta}(s) = E_{1,i}(s) + E_{2,i}(s), \quad 0\le s\le T,
\end{equation}
where
\begin{align*}
  E_{1,i}(s) &= -\int_0^s\big(\na U_i(X_{k,i}^{N,\eta}(t))
	-\na U_i(\overline{X}_{k,i}^\eta(t))\big)\d t, \\
  E_{2,i}(s) &= \int_0^s(E_{21}(t)-E_{22}(t))\d W_i^k(t), \\
	E_{21}(t) &= \bigg(2\sigma_i + 2\sum_{j=1}^n f_\eta\bigg(\frac{1}{N}\!\!\!\!
	\sum_{\substack{\ell=1 \\ (\ell,j)\neq(k,i)}}^N\!\!\!\!
	B_{ij}^\eta(X_{k,i}^{N,\eta}(t)-X_{\ell,j}^{N,\eta}(t))\bigg)\bigg)^{1/2}, \\
	E_{22}(t) &= \bigg(2\sigma_i + 2\sum_{j=1}^n f_\eta\big(B_{ij}^\eta
	* u_{\eta,j}(t,\overline{X}_{k,i}^\eta(t))\big)\bigg)^{1/2}.
\end{align*}
We use the global Lipschitz continuity of $\na U_i$ and the Fubini theorem to
estimate the first term:
\begin{align*}
  \E\Big(\sup_{0<s<T} |E_{1,i}(s)|^2\Big)
	&\le C\red{T}
	\E\int_0^T\big|(X_{k,i}^{N,\eta}-\overline{X}_{k,i}^\eta)(s)\big|^2\d s \\
	&\le C\red{T}
	\int_0^T\E\Big(\sup_{0<s<t}|(X_{k,i}^{N,\eta}-\overline{X}_{k,i}^\eta)(s)|^2\Big)\d t.
\end{align*}
Summing over $i=1,\ldots,n$ and taking the supremum over $k=1,\ldots,N$ leads to
\begin{equation}\label{4.K1}
  \sup_{k=1,\ldots,N}\E\bigg(\sum_{i=1}^n\sup_{0<s<T} |E_{1,i}(s)|^2\bigg)
	\le C\red{T}\int_0^T\sup_{k=1,\ldots,N}\E\Big(\sup_{0<s<t}
	|(X_{k,i}^{N,\eta}-\overline{X}_{k,i}^\eta)(s)|^2\Big)\d t.
\end{equation}

Next, we apply the Burkholder--Davis--Gundy inequality 
\cite[Theorem 3.28]{KaSh91} to the second term $E_{2,i}$
and use the Lipschitz continuity of $x\mapsto(2\sigma_i+x)^{1/2}$ for $x\ge 0$:
\begin{align}
  \E\Big(&\sup_{0<s<T}|E_{2,i}(s)|^2\Big) \le C\E\int_0^T(E_{21}(t)-E_{22}(t))^2\d t 
	\nonumber \\
  &\le C\E\int_0^T\bigg[\sum_{j=1}^n f_\eta\bigg(\frac{1}{N}\!\!\!\!
	\sum_{\substack{\ell=1 \\ (\ell,j)\neq(k,i)}}^N\!\!\!\!
	B_{ij}^\eta(X_{k,i}^{N,\eta}(t)-X_{\ell,j}^{N,\eta}(t))\bigg) \nonumber \\
	&\phantom{xxxxxx}{}
	- \sum_{j=1}^n f_\eta\big(B_{ij}^\eta*u_{\eta,j}(t,\overline{X}_{k,i}^\eta(t))
	\big)\bigg]^2\d t \nonumber \\
	&= C\E\int_0^T\bigg[\sum_{j=1}^n(L^1_j(t)+L^2_j(t)+L^3_j(t))\bigg]^2\d t 
	\nonumber \\
	&\le C(n)\E\int_0^T\sum_{j=1}^n\big(L^1_j(t)^2 + L^2_j(t)^2 + L^3_j(t)^2\big)\d t,
	\label{4.aux2}
\end{align}
where 
\begin{align*}
  L^1_j(t) &= f_\eta\bigg(\frac{1}{N}\!\!\!\!
	\sum_{\substack{\ell=1 \\ (\ell,j)\neq(k,i)}}^N\!\!\!\! B_{ij}^\eta
	(X_{k,i}^{N,\eta}(t)-X_{\ell,j}^{N,\eta}(t))\bigg)
	- f_\eta\bigg(\frac{1}{N}\!\!\!\!
	\sum_{\substack{\ell=1 \\ (\ell,j)\neq(k,i)}}^N\!\!\!\!B_{ij}^\eta
  (\overline{X}_{k,i}^{\eta}(t)-X_{\ell,j}^{N,\eta}(t))\bigg), \\
	L^2_j(t) &= f_\eta\bigg(\frac{1}{N}\!\!\!\!
	\sum_{\substack{\ell=1 \\ (\ell,j)\neq(k,i)}}^N\!\!\!\! B_{ij}^\eta
	(\overline{X}_{k,i}^{\eta}(t)-X_{\ell,j}^{N,\eta}(t))\bigg)
	- f_\eta\bigg(\frac{1}{N}\!\!\!\!
	\sum_{\substack{\ell=1 \\ (\ell,j)\neq(k,i)}}^N\!\!\!\! B_{ij}^\eta
  (\overline{X}_{k,i}^{\eta}(t)-\overline{X}_{\ell,j}^{\eta}(t))\bigg), \\
	L^3_j(t) &= f_\eta\bigg(\frac{1}{N}\!\!\!\!
	\sum_{\substack{\ell=1 \\ (\ell,j)\neq(k,i)}}^N\!\!\!\! B_{ij}^\eta
	(\overline{X}_{k,i}^{\eta}(t)-\overline{X}_{\ell,j}^{\eta}(t))\bigg)
	- f_\eta\big(B_{ij}^\eta* u_{\eta,j}(t,\overline{X}_{k,i}^\eta(t))\big). \\
\end{align*}

We estimate these three terms separately. By construction, the Lipschitz 
constant of $f_\eta$ can be estimated by $L_f\le\eta^{-\alpha}$.
Moreover, the Lipschitz constant of $B_{ij}^\eta(x)=\eta^{-d}B_{ij}(|x|/\eta)$
is computed by $L_B=\max_{i,j=1,\ldots,n}\|\na B_{ij}^\eta\|_{L^\infty(\R^d)}
\le C\eta^{-d-1}$. This shows that
\begin{align*}
  |L^1_j(t)| &\le L_f\bigg|\frac{1}{N}\!\!\!\!
	\sum_{\substack{\ell=1 \\ (\ell,j)\neq(k,i)}}^N\!\!\!\!\big(B_{ij}^\eta
	(X_{k,i}^{N,\eta}(t)-X_{\ell,j}^{N,\eta}(t)) - B_{ij}^\eta
	(\overline{X}_{k,i}^\eta(t)-X_{\ell,j}^{N,\eta}(t))\big)\bigg| \\
  &\le L_f L_B\big|X_{k,i}^{N,\eta}(t)-\overline{X}_{k,i}^\eta(t)\big|
	\le \red{C}\eta^{-d-1-\alpha}\big|X_{k,i}^{N,\eta}(t)-\overline{X}_{k,i}^\eta(t)\big|.
\end{align*}
Therefore, by Fubini's theorem,
\begin{align}
  \E\int_0^T\sum_{j=1}^n |L^1_j(t)|^2\d t
	&\le C(n)\eta^{-2(d+1 +\alpha)}\E\int_0^T\big|X_{k,i}^{N,\eta}(t)
	-\overline{X}_{k,i}^\eta(t)\big|^2\d t \nonumber \\
	&\le C(n)\eta^{-2(d+1 +\alpha)}\int_0^T\sup_{k=1,\ldots,N}\E\Big(
	\sup_{0<s<t}\big|X_{k,i}^{N,\eta}(t)-\overline{X}_{k,i}^\eta(t)\big|^2\Big)\d t.
	\label{4.EL1}
\end{align}
We can estimate the second term $L^2_j(t)$ in a similar way, leading to
\begin{equation}\label{4.EL2}
  \E\int_0^T\sum_{j=1}^n L^2_j(t)^2\d t
	\le C(n)\eta^{-2(d+1 +\alpha)}\int_0^T\sup_{\ell=1,\ldots,N}
	\E\bigg(\sup_{0<s<t}\sum_{j=1}^n
	\big|X_{\ell,j}^{N,\eta}(t)-\overline{X}_{\ell,j}^\eta(t)\big|^2\bigg)\d t.
\end{equation}

The third term $L^3_j(t)$ has to be treated in a different way. First, we use the 
Lipschitz continuity of $f_\eta$ to find that 
$$
  L^3_j(t) \le \frac{C(n)}{N\eta^{\alpha}}\bigg|
	\sum_{\ell=1}^N
	\big(B_{ij}^\eta(\overline{X}_{k,i}^\eta - \overline{X}_{\ell,j}^\eta)
	- B_{ij}^\eta * u_{\eta,j}(\overline{X}_{k,i}^\eta)\big) 
	- \frac{1}{\eta^d}B_{ii}(0)\bigg|.
$$
This implies that
\begin{align}
  \E&\int_0^T \sum_{j=1}^n L^3_j(t)^2\d t 
	\le \frac{C(n,T)}{N^2\eta^{2(d+\alpha)}} \nonumber \\
  &{}+ \frac{C(n)}{N^2\eta^{2\alpha}}\sum_{j=1}^n\int_0^T\E\bigg(
	\!\sum_{\ell=1}^N
	\Big(B_{ij}^\eta\big(\overline{X}_{k,i}^\eta(t) - \overline{X}_{\ell,j}^\eta(t)\big)
	- B_{ij}^\eta * u_{\eta,j}(\overline{X}_{k,i}^\eta)\Big)\bigg)^2 \d t. \label{4.L3}
\end{align}

It remains to estimate the expectation. To this end, we introduce
$$
  D_{(k,i),(\ell,j)}(t) := B_{ij}^\eta(\overline{X}_{k,i}^\eta(t) 
	- \overline{X}_{\ell,j}^\eta(t)) 
	- B_{ij}^\eta * u_{\eta,j}(t,\overline{X}_{k,i}^\eta(t)), \quad
	(\ell,j)\neq(k,i).
$$
The processes $\overline{X}_{k,i}^\eta$ and $\overline{X}_{\ell,j}^\eta$
are independent, since for $i=j$, we are considering $N$ independent copies of
the same process and for $i\neq j$, the equation fulfilled by $\overline{X}_{k,i}^\eta$
does not depend on the process $\overline{X}_{\ell,j}^\eta$.
If $(k,i)\neq (\ell,j)$, $(k,i)\neq(m,j)$, and $\ell\neq m$, the processes 
$D_{(k,i),(\ell,j)}(t)$ and $D_{(k,i),(m,j)}(t)$ are orthogonal, since 
\begin{align*}
  \E\big(D_{(k,i),(\ell,j)}(t)D_{(k,i),(m,j)}(t)\big)
	&= \int_{\R^d}\bigg(\int_{\R^d}\int_{\R^d}B_{ij}^\eta(x-y)B_{ij}^\eta(x-z)
	u_{\eta,j}(t,y)u_{\eta,j}(t,z)\d y\d z \\
	&\phantom{xx}{}
	- 2\int_{\R^d}B_{ij}^\eta(x-y)u_{\eta,j}(t,y)(B_{ij}^\eta*u_{\eta,j})(t,y)\d y \\
	&\phantom{xx}{}+ (B_{ij}^\eta*u_{\eta,j})(t,x)(B_{ij}^\eta*u_{\eta,j})(t,x)
	\bigg)u_{\eta,i}(t,x)\d x = 0.
\end{align*}
Together with $\E(D_{(k,i),(\ell,j)})=0$, this shows that the processes 
$D_{(k,i),(\ell,j)}$ are uncorrelated.

However, if $(k,i)\neq (\ell,j)$, $(k,i)\neq(m,j)$, and $\ell=m$, 
the expectation does not vanish:
\begin{align*}
  \E\big(D_{(k,i),(\ell,j)}(t)^2\big) &= \int_{\R^d}\bigg((B^\eta_{ij}*u_{\eta,j})(t,x)
	(B_{ij}^\eta*u_{\eta,j})(t,x) + \int_{\R^d}\Big(B_{ij}^\eta(x-y)^2u_{\eta,j}(t,y) \\
	&\phantom{xx}{}
	- 2B_{ij}^\eta(x-y)u_{\eta,j}(t,y)(B_{ij}^\eta*u_{\eta,j}(t,x)\Big)\d y\bigg)
	u_{\eta,i}(t,x)\d x \\
  &= \int_{\R^d}\big(((B_{ij}^\eta)^2*u_{\eta,j})(t,x)
	- (B_{ij}^\eta*u_{\eta,j})(t,x)^2\big)
	u_{\eta,i}(t,x)\d x.
\end{align*}
This expression is independent of the particle index $k$ and $\ell$, it depends only
on the species numbers $i$ and $j$. The case $(k,i)=(\ell,j)$ can be treated in 
a similar way with the difference that, since $ D_{(k,i),(k,i)}(t) 
= \eta^{-d}B_{ii}(0) - B_{ii}^\eta\ast u_{i,\eta}(\overline{X}_{k,i}^\eta(t))$,
we obtain for $\mathbb{E}(D_{(k,i),(k,i)}(t)D_{(k,i),(m,j)}(t))$ an additional 
term of order $\eta^{-2d}$.
Hence, we infer from \eqref{4.L3} and the previous
computation that
\begin{align}
  \E&\int_0^T\sum_{j=1}^n L^3_j(t)^2\d t -\frac{C(n,T)}{N^2\eta^{2(d+\alpha)}}
	= \frac{C(n)}{N^2\eta^{2\alpha}}\sum_{j=1}^n\sum_{\ell=1}^N
	\int_0^T\E\big(D_{(k,i),(\ell,j)}(t)^2\big)\d t \nonumber \\
	&\le \frac{C(n)}{N\eta^{2\alpha}}\|u_{\eta,i}\|_{L^\infty(0,T;L^\infty(\R^d))} 
	\nonumber \\
	&\phantom{xx}{}\times
	\sum_{j=1}^n\int_0^T\bigg(\|(B_{ij}^\eta)^2*u_{\eta,j}\|_{L^1(\R^d)} 
	+ \|B_{ij}^\eta*u_{\eta,j}\|_{L^2(\R^d)}^2\bigg(1+\frac{1}{\eta^{2d}}\bigg)
	\bigg)\d t \nonumber \\
  &\le \frac{C(n)}{N\eta^{2\alpha}}\sum_{j=1}^n\int_0^T
	\bigg(\|B_{ij}^\eta\|_{L^2(\R^d)}^2
	\|u_{\eta,j}\|_{L^\infty(\R^d)} + \|B_{ij}^\eta\|_{L^1(\R^d)}^2
	\|u_{\eta,j}\|_{L^2(\R^d)}^2\bigg(1+\frac{1}{\eta^{2d}}\bigg)\bigg)\d t \nonumber \\
	&\le \frac{C(T,n)}{N\eta^{2(d + \alpha)}}, \label{4.EL3}
\end{align}
recalling that $\|B_{ij}^\eta\|_{L^2(\R^d)}\le C\eta^{-d/2}$ and
$\|B_{ij}^\eta\|_{L^1(\R^d)}=A_{ij}\le A$ and choosing $\eta<1$. 

Inserting estimates \eqref{4.EL1}, \eqref{4.EL2}, and \eqref{4.EL3} for
$L^m_j(t)$ ($m=1,2,3$) into \eqref{4.aux2}, we conclude that
\begin{align*}
  \sup_{k=1,\ldots,N}&\E\bigg(\sum_{i=1}^n\sup_{0<s<T}|E_{2,i}(s)|^2\bigg)
	\le \frac{C(T,n)}{N\eta^{2(d+\alpha)}} \\
	&\phantom{}{}+ C(n,\sigma_{\rm min})\eta^{-2(d+1+\alpha)}
	\int_0^T\sup_{k=1,\ldots,N}\E\Big(\sup_{0<s<t}
	\big|X_{k,i}^{N,\eta}(t)-\overline{X}_{k,i}^\eta(t)\big|^2\Big)\d t.
\end{align*}
We infer from \eqref{4.D}, estimate \eqref{4.K1}, and the previous estimate for
$E_{2,i}$ that
\begin{align*}
  S(T) &:=
	\sup_{k=1,\ldots,N}\E\bigg(\sum_{i=1}^n\sup_{0<s<t}|D_{k,i}^{N,\eta}(s)|^2\bigg) \\
	&\le \frac{C(T,n)}{N\eta^{2(d+\alpha)}}
	+ C(n,\sigma_{\rm min})(\eta^{-2(d+1+\alpha)}+\red{T})\int_0^T S(t)\d t.
\end{align*}
Note that the function $S$ is continuous because of the continuity of the paths
of $X_{k,i}^{N,\eta}$ and $\overline{X}_{k,i}^\eta$. Therefore, by Gronwall's
inequality, we have
$$
  S(T) \le \frac{C(T,n)}{N\eta^{2(d+\alpha)}}
	\exp\big(C(n,\red{T},\sigma_{\rm min})\eta^{-2(d+1+\alpha)}T\big).
$$
We choose $\delta>0$ such that $C(n,\red{T},\sigma_{\rm min})T\delta < 1$ and
$\eta>0$ such that $\eta^{-2(d+1+\alpha)}\le\delta\log N$. Then
$$
  S(T) \le \frac{1}{N}C(T,n)\exp\big(C(n,\red{T},\sigma_{\rm min})T
	\delta\log N\big)
	= C(T,n)N^{-1+C(n,\red{T},\sigma_{\rm min})T\delta}.
$$
This finishes the proof.
\end{proof}

Next, we prove an error estimate for the difference $\overline{X}_{k,i}^\eta
-\widehat{X}_{k,i}$.

\begin{lemma}\label{lem.conv2}
Let $\overline{X}_{k,i}^\eta$ and $\widehat{X}_{k,i}$ be the solutions to 
\eqref{1.il} and \eqref{1.ml} in the sense of
Proposition \ref{prop.ex}. Under the assumptions of Theorem \ref{thm.conv}, 
it holds for small $\eta>0$ that
$$
  \sup_{k=1,\ldots,N}\E\bigg(\sum_{i=1}^n\sup_{0<s<T}
	\big|(\overline{X}_{k,i}^{\eta}-\widehat{X}_{k,i})(s)\big|^2\bigg) 
	\le C(T,\sigma_{\rm min})\eta^{2(1-\alpha)}.
$$
\end{lemma}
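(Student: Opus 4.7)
\textbf{Plan for Lemma \ref{lem.conv2}.}
The strategy parallels Lemma \ref{lem.conv1} but is simpler: both SDEs \eqref{1.il} and \eqref{1.ml} share initial data and driving Brownian motions, so randomness enters only indirectly through $\overline X$ and $\widehat X$ themselves, and the concentration argument behind \eqref{4.EL3} is replaced by pure PDE estimates relating $u_\eta$ to $u$. Writing $D_{k,i}(s):=\overline X_{k,i}^\eta(s)-\widehat X_{k,i}(s)$, the difference splits as a drift integral plus a stochastic integral. The drift integral is bounded by global Lipschitz continuity of $\na U_i$ and Fubini's theorem, producing the harmless Gronwall integrand $C\int_0^T\E(\sup_{0<r<t}|D_{k,i}(r)|^2)\,dt$. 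For the stochastic integral I would apply the Burkholder--Davis--Gundy inequality together with Lipschitz continuity of $x\mapsto(2\sigma_i+x)^{1/2}$ on $[0,\infty)$, reducing everything to controlling the time integral of
\[
\sum_{j=1}^n\Bigl|f_\eta\bigl(B_{ij}^\eta*u_{\eta,j}(\overline X_{k,i}^\eta)\bigr)-f\bigl(a_{ij}u_j(\widehat X_{k,i})\bigr)\Bigr|^2.
\]

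The heart of the proof is the telescoping of this difference, inside each $j$, into three pieces by inserting the intermediate terms $f_\eta(B_{ij}^\eta*u_{\eta,j}(\widehat X_{k,i}))$ and $f_\eta(a_{ij}u_{\eta,j}(\widehat X_{k,i}))$. The \emph{position-shift} piece is controlled via the Lipschitz constant of $B_{ij}^\eta*u_{\eta,j}$, which equals $B_{ij}^\eta*\na u_{\eta,j}$ and is bounded by $A\|\na u_{\eta,j}\|_{L^\infty}\le C$ uniformly in $\eta$ by Theorem \ref{thm.nonloc}; since both convolved arguments land in the interval $I$ defined in \eqref{def.I}, one may use the local Lipschitz constant of $f_\eta=f$ on $I$ (of order $\eps$) rather than the global $\eta^{-\alpha}$ bound, and this piece feeds only a Gronwall-type integrand in $|D_{k,i}|^2$ with $\eta$-independent constant. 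The \emph{mollifier-to-delta} piece is bounded using the pointwise estimate $|B_{ij}^\eta*u_{\eta,j}(y)-a_{ij}u_{\eta,j}(y)|\le C\eta\|\na u_{\eta,j}\|_{L^\infty}$, obtained by changing variables on $B_\eta(0)$ and applying the fundamental theorem of calculus; combined with the $\eta^{-\alpha}$ Lipschitz bound of $f_\eta$, this piece contributes $\eta^{2(1-\alpha)}$ after squaring. The \emph{PDE-convergence} piece uses the identification of $\widehat X_{k,i}(t)$ with law $u_i(t,\cdot)$ from Section \ref{sec.link} to rewrite
\[
\E\bigl|u_{\eta,j}(t,\widehat X_{k,i}(t))-u_j(t,\widehat X_{k,i}(t))\bigr|^2 = \int_{\R^d}(u_{\eta,j}-u_j)^2 u_i\,dx \le \|u_i\|_{L^\infty}\|u_{\eta,j}-u_j\|_{L^2}^2 \le C\eta^2
\]
via Theorem \ref{thm.loc}, which together with the $\eta^{-\alpha}$ Lipschitz bound again yields $\eta^{2(1-\alpha)}$.

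Combining these three estimates, summing over $i=1,\ldots,n$, taking $\sup_k$, and applying Gronwall's inequality to the continuous function $T\mapsto\sup_k\E(\sum_i\sup_{0<s<T}|D_{k,i}(s)|^2)$ yields the desired bound $C(T,\sigma_{\rm min})\eta^{2(1-\alpha)}$. \emph{Main obstacle:} to make the Gronwall coefficient multiplying $\int_0^T\E(\sup|D_{k,i}|^2)\,dt$ \emph{independent} of $\eta$, so that the exponential stays bounded as $\eta\to 0$, the position-shift piece must exploit the local Lipschitz constant $\eps$ of $f$ on $I$ rather than the pessimistic $\eta^{-\alpha}$ bound; once this case distinction is made, the $\eta^{-\alpha}$ factor enters only the mollifier and PDE pieces and is the sole source of the rate degradation from $\eta^2$ to $\eta^{2(1-\alpha)}$.
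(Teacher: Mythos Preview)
Your plan is correct and follows essentially the same route as the paper: split into drift and martingale parts, apply Burkholder--Davis--Gundy plus the Lipschitz continuity of $x\mapsto(2\sigma_i+x)^{1/2}$, telescope the nonlinearity into a position-shift piece (feeding Gronwall with $\eta$-independent constant via the \emph{local} Lipschitz bound of $f$ on $I$), a mollifier piece (rate $\eta^{2(1-\alpha)}$), and a PDE-error piece (using the law of the process and Theorem~\ref{thm.loc}), then close with Gronwall.

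The only notable difference is the telescoping order: the paper inserts the intermediate terms $f_\eta(a_{ij}u_j(\widehat X_i))$, $f_\eta(B_{ij}^\eta*u_j(\widehat X_i))$, $f_\eta(B_{ij}^\eta*u_j(\overline X_i^\eta))$ --- i.e.\ it swaps to the limiting density $u_j$ first, then undoes the mollifier, then shifts position --- whereas you keep $u_{\eta,j}$ throughout and swap to $u_j$ last. Your order is equally valid and arguably slightly cleaner, since your pointwise bound $|B_{ij}^\eta*u_{\eta,j}(y)-a_{ij}u_{\eta,j}(y)|\le A\eta\|\na u_{\eta,j}\|_{L^\infty}$ avoids passing through the law of $\widehat X_i$ for the mollifier piece. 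Also note that for your third piece both $a_{ij}u_{\eta,j}$ and $a_{ij}u_j$ lie in $I$, so you may in fact use the local Lipschitz constant there as well and obtain $C\eta^2$; the $\eta^{-\alpha}$ factor is only genuinely needed in your mollifier piece, which alone determines the stated rate $\eta^{2(1-\alpha)}$.
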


\begin{proof}
Since we are considering $N$ independent copies, we can omit the particle index $k$.
Set $D_i^\eta(s):=\overline{X}_{k,i}^{\eta}(s)-\widehat{X}_{k,i}(s)$. Then,
similarly as in the proof of Lemma \ref{lem.conv1},
$D_i^\eta(s)=D_1(s)+D_2(s)$, where
\begin{align*}
  D_1(s) &= -\int_0^s\big(\na U_i(\overline{X}_i^\eta(t))-\na U_i(\widehat{X}_i(t))
	\big)\d t, \\
  D_2(s) &= \int_0^s\bigg[\bigg(2\sigma_i + 2\sum_{j=1}^n f_\eta\big(B_{ij}^\eta
	* u_{\eta,j}(\overline{X}_i^\eta)\big)\bigg)^{1/2} \\
	&\phantom{xx}{}
	- \bigg(2\sigma_i + 2\sum_{j=1}^n f\big(a_{ij}u_j(\widehat{X}_i)\big)\bigg)^{1/2}
	\bigg]\d W_i(t).
\end{align*}
We infer from the Lipschitz continuity of $\na U_i$ and Fubini's theorem that
\begin{align}
  \E\Big(\sup_{0<s<T}|D_1(s)|^2\Big)
	\le C\red{T}\E\bigg(\int_0^T\big|\overline{X}_{i}^{\eta}(s)-\widehat{X}_{i}(s)\big|^2
	\d s\bigg) 
	\le C\red{T}\int_0^T\E\Big(\sup_{0<s<t}|D_i^\eta(s)|^2\Big)\d t. \label{4.D1}
\end{align}
Similarly as in the proof of Lemma \ref{lem.conv1}, we use for $D_2$ the
Burkholder--Davis--Gundy inequality and the Lipschitz continuity of
$x\mapsto(2\sigma_i+x)^{1/2}$ on $[0,\infty)$ to obtain
\begin{align}
  \E\Big(\sup_{0<s<T}|D_2(s)|^2\Big)
	&\le C\E\int_0^T\bigg(\sum_{j=1}^n\big(f(a_{ij}u_j(\widehat{X}_i))
	- f_\eta(B_{ij}^\eta*u_{\eta,j}(\overline{X}_i^\eta))
	\big)\bigg)^2\d t \nonumber \\
	&\le C(n)(D_{21}+D_{22}+D_{23}+D_{24}), \label{4.D2}
\end{align}
where
\begin{align*}
  D_{21} &= \sum_{j=1}^n\E\int_0^T\big(f(a_{ij}u_j(\widehat{X}_i)) 
	- f_\eta(a_{ij}u_j(\widehat{X}_i))\big)^2\d t, \\
	D_{22} &= \sum_{j=1}^n\E\int_0^T\big(f_\eta(a_{ij}u_j(\widehat{X}_i)) 
	- f_\eta(B_{ij}^\eta*u_j(\widehat{X}_i))\big)^2\d t, \\
	D_{23} &= \sum_{j=1}^n\E\int_0^T\big(f_\eta(B_{ij}^\eta*u_j(\widehat{X}_i)) 
	- f_\eta(B_{ij}^\eta*u_{j}(\overline{X}_i^\eta))\big)^2\d t, \\
	D_{24} &= \sum_{j=1}^n\E\int_0^T\big(f_\eta(B_{ij}^\eta*u_{j}(\overline{X}_i^\eta)) 
	- f_\eta(B_{ij}^\eta*u_{\eta,j}(\overline{X}_i^\eta))\big)^2\d t.
\end{align*}

The first expression $D_{21}$ vanishes if $\eta>0$ is sufficiently small,
since then $f=f_\eta$ on the range of $a_{ij}u_j(\widehat{X}_i)$.
Using 
$$
  \|a_{ij}u_j-B_{ij}^\eta*u_j\|_{L^2(0,T;L^2(\R^d))}
  \le C\eta\|\na u_j\|_{L^2(0,T;L^2(\R^d))} \le C\eta,
$$
which was shown in the proof of Theorem \ref{thm.loc}, and the Lipschitz continuity 
of $f_\eta$ with Lipschitz constant less or equal $\eta^{-\alpha}$, 
we find that
\begin{align*}
  D_{22} &= \sum_{j=1}^n\int_0^T\int_{\R^d}\big(f_\eta(a_{ij}u_j)
	- f_\eta(B_{ij}^\eta*u_j)\big)^2 u_i\d x\d t \\
	&\le \eta^{-2\alpha}\sum_{j=1}^n\|u_i\|_{L^\infty(0,T;L^\infty(\R^d))}
	\|a_{ij}u_j-B_{ij}^\eta*u_j\|_{L^2(0,T;L^2(\R^d))}^2
  \le C(n)\eta^{2(1-\alpha)}.
\end{align*}
Thanks to the uniform boundedness of the family $B_{ij}^\eta*u_j$, we can choose 
$\eta>0$ sufficiently small, say $\eta\le\eta^*$ for some $\eta^*>0$, such that 
$f(B_{ij}^\eta*u_j)=f_\eta(B_{ij}^\eta*u_j)$ for $0<\eta\le\eta^*$.
Then, using Young's convolution inequality and the uniform estimate
$\|\na u_j\|_{L^\infty(0,T;L^\infty(\R^d))}\le C\|u_0\|_{H^s(\R^d)}$ from
Theorem \ref{thm.loc},
the third term $D_{23}$ is estimated as
\begin{align*}
  D_{23} &\le C(\eta^*)\sum_{j=1}^n
	\|\na(B_{ij}^\eta*u_j)\|_{L^\infty(0,T;L^\infty(\R^d)}
	\int_0^T\E\big(|\widehat{X}_i(t)-\overline{X}_i^\eta(t)|^2\big)\d t \\
  &\le C\sum_{j=1}^n\|\na u_j\|_{L^\infty(0,T;L^\infty(\R^d)}
	\int_0^T\E\big(|\widehat{X}_i(t)-\overline{X}_i^\eta(t)|^2\big)\d t \\
	&\le C\int_0^T\E\Big(\sup_{0<s<t}|D_i^\eta(s)|^2\Big)\d t.
\end{align*}
Finally, it follows from the error estimate for
$u-u_{\eta}$ from Theorem \ref{thm.loc} that
\begin{align*}
  D_{24} &\le C\sum_{j=1}^n\int_0^T\int_{\R^d}|B_{ij}^\eta*u_j
	- B_{ij}^\eta*u_{\eta,j}|^2 u_{\eta,i}\d x\d t \\
	&\le C\sum_{j=1}^n\|u_{\eta,i}\|_{L^\infty(0,T;L^\infty(\R^d))}
  \int_0^T\|B_{ij}^\eta\|_{L^1(\R^d)}^2\|u_j-u_{\eta,j}\|_{L^2(\R^d)}^2\d t \\
	&\le C(T)\eta^{2}.
\end{align*}

Inserting the estimates for $D_{21},\ldots,D_{24}$ into \eqref{4.D2}, we conclude that
$$
  \E\Big(\sup_{0<s<T}|D_2(s)|^2\Big)
	\le C(T,n)\eta^{2(1-\alpha)} 
	+ C(\red{T})\int_0^T\E\Big(\sup_{0<s<t}|D_i^\eta(s)|^2\Big)\d t.
$$
Together with estimate \eqref{4.D1} for $D_1(s)$ and recalling that
$D_i^\eta=D_1+D_2$, we arrive at
$$
  \E\Big(\sup_{0<s<T}|D_i^\eta(s)|^2\Big)
	\le C(T,n)\eta^{2(1-\alpha)} 
	+ C(\red{T})\int_0^T\E\Big(\sup_{0<s<t}|D_i^\eta(s)|^2\Big)\d t.
$$
The proof is finished after applying Gronwall's inequality and summing
over $i=1,\ldots,n$.
\end{proof}

Theorem \ref{thm.conv} now follows from Lemmas \ref{lem.conv1} and
\ref{lem.conv2} and the triangle inequality:
  \begin{align*}
  \sup_{k=1,\ldots,N}\E&\bigg(\sum_{i=1}^n\sup_{0<s<t}
	\big|X_{\eta,i}^{k,N}(s) - \widehat X_i^k(s)\big|^2\bigg) \\
	&\le \red{2}\sup_{k=1,\ldots,N}\E\bigg(\sum_{i=1}^n\sup_{0<s<t}
	\big|X_{\eta,i}^{k,N}(s) - \overline{X}_{\eta,i}^k(s)\big|^2\bigg) \\
	&\phantom{xx}{}+ \red{2}\sup_{k=1,\ldots,N}\E\bigg(\sum_{i=1}^n\sup_{0<s<t}
	\big|\overline{X}_{\eta,i}^k(s) - \widehat X_i^k(s)\big|^2\bigg) \\
  &\le C_1N^{-1+C_2\delta} + C_3\eta^{2(1-\alpha)}.
\end{align*}
The condition $\log N\ge \delta^{-1}\eta^{-2(d+1+\alpha)}$ is equivalent to
$N^{-1+C_2\delta}\le\exp((-\delta^{-1}+C_2)\eta^{-2(d+1+\alpha)})$.
We choose $\delta>0$ such that $-\delta^{-1}+C_2<0$ and observe that
exponential decay is always faster than algebraic decay to conclude that
$\exp((-\delta^{-1}+C_2)\eta^{-2(d+1+\alpha)})\le \eta^{2(1-\alpha)}$. This yields
$$
  \sup_{k=1,\ldots,N}\E\bigg(\sum_{i=1}^n\sup_{0<s<t}
	\big|X_{\eta,i}^{k,N}(s) - \widehat X_i^k(s)\big|^2\bigg)
	\le C_4\eta^{2(1-\alpha)},
$$
finishing the proof.

%%%%%%%%%%%%%%%%%%%%%%%%%%%%%%%%%%%%%%%%%%%%%%%%%%%%%%%%%%%%%%%%%%%%%%%%%%

\section{Numerical tests}\label{sec.num}

In this section, we perform some numerical simulations of the particle system
\eqref{1.pl} in one space dimension, without environmental potential, 
and with linear function $f(x)=x$. 
We are interested in the numerical comparison of the solutions to the
particle systems \eqref{1.cdj2} and \eqref{1.pl} \red{in terms of the
segregation behavior}.
We explore the ability of both systems to model the segregation of the species.
Numerical tests for the associated cross-diffusion systems 
\eqref{1.skt} and \eqref{1.cdj} are work in progress.

We discretize the particle systems \eqref{1.cdj2} and \eqref{1.pl} by the
Euler--Maruyama scheme. Let $M\in\N$ and introduce the time steps 
$0<t_1<\cdots<t_M=T$ with $\triangle t_m=t_{m+1}-t_m$. 
We approximate $X_{k,i}^{N,\eta}(t_m)$ by $x_{m}^{k,i}$
and $Y_{k,i}^{N,\eta}(t_m)$ by $y_{m}^{k,i}$, defined by, respectively,
\begin{align*}
  x_{m+1}^{k,i} &= x_m^{k,i} + \bigg(2\sigma_i + \frac{2}{N}\sum_{j=1}^n
  \sum_{\ell=1}^N B_{ij}^\eta(x_m^{k,i}-x_m^{\ell,j})\bigg)^{1/2}\sqrt{\triangle t_m}
	w_m, \\
  y_{m+1}^{k,i} &= y_m^{k,i} - \sum_{j=1}^n\frac{1}{N}\sum_{\ell=1}^N 
	\na B_{ij}^\eta(y_m^{k,i}-y_\ell^{m,j})\triangle t_m + \sqrt{2\sigma_i\triangle t_m}
	z_m,
\end{align*}
with initial conditions $x_0^{i,k} = \xi_i^k$ and $y_0^{i,k}=\xi_i^k$, where
$\xi_i^k$ are iid random variables and $w_m$ and $z_m$ are normally distributed.
It is well known that the solutions to the Euler--Maruyama scheme converge to
the associated stochastic processes in the strong sense; see, e.g., 
\cite[Theorem 9.6.2]{KlPl92}. 

The numerical scheme is implemented in MATLAB using the parallel computing toolbox
to accelerate the simulations. 
The interaction potential is given by
$B(x)=\exp(-1/(1-x^2))$ for $|x|\le 1$ and $B(x)=0$ else. Then
$B_{ij}^\eta(x)=\eta^{-1}B(x/\eta)$. The numerical parameters are
$\triangle t=1/100$, $\eta=2$, $N=5000$ particles, $n_{\rm sim}=500$ simulations.

\subsection{Two species: nonsymmetric case} 

We consider a nonsymmetric diffusion matrix with $a_{11}=0$, $a_{12}=355$, 
$a_{21}=25$, $a_{22}=0$, and $\sigma_1=1$, $\sigma_2=2$. 
The initial data are Gaussian distributions with 
mean $-1$ (for species $i=1$) and $1$ (for species $i=2$)
and variance 2. Figure \ref{fig.nsymm}
shows the approximate densities of both species (histogram)
for systems \eqref{1.pl} and \eqref{1.cdj2} at time $t=2$. 
We observe a segregation of the densities in both models. In the population 
system \eqref{1.pl}, species 1 develops two clusters because
of the very different ``population pressure'' parameters $a_{12}=355$ and
$a_{21}=25$, while species 2 develops only one cluster around $x=0$; see
Figure \ref{fig.nsymm} left.
The segregation effect is stronger in the particle system \eqref{1.cdj2} in the
sense that both species avoid each other as far as possible; see
Figure \ref{fig.nsymm} right. This is not surprising since the diffusion of
system \eqref{1.pl} is generally larger than that one of system \eqref{1.cdj2}.
The numerical results confirm the segregation property defined in \cite{BGHP85}.
Indeed, this work considers the cross-diffusion system \eqref{1.cdj2} with
$\sigma_1=\sigma_2=0$ and $a_{11}=a_{12}=a_{21}=a_{22}=1$. It was proved that
the two species are segregated for all times if they do so initially. Here,
segregation means that the intersection of the supports of the densities 
is empty.

\begin{figure}[htb]
\includegraphics[width=80mm]{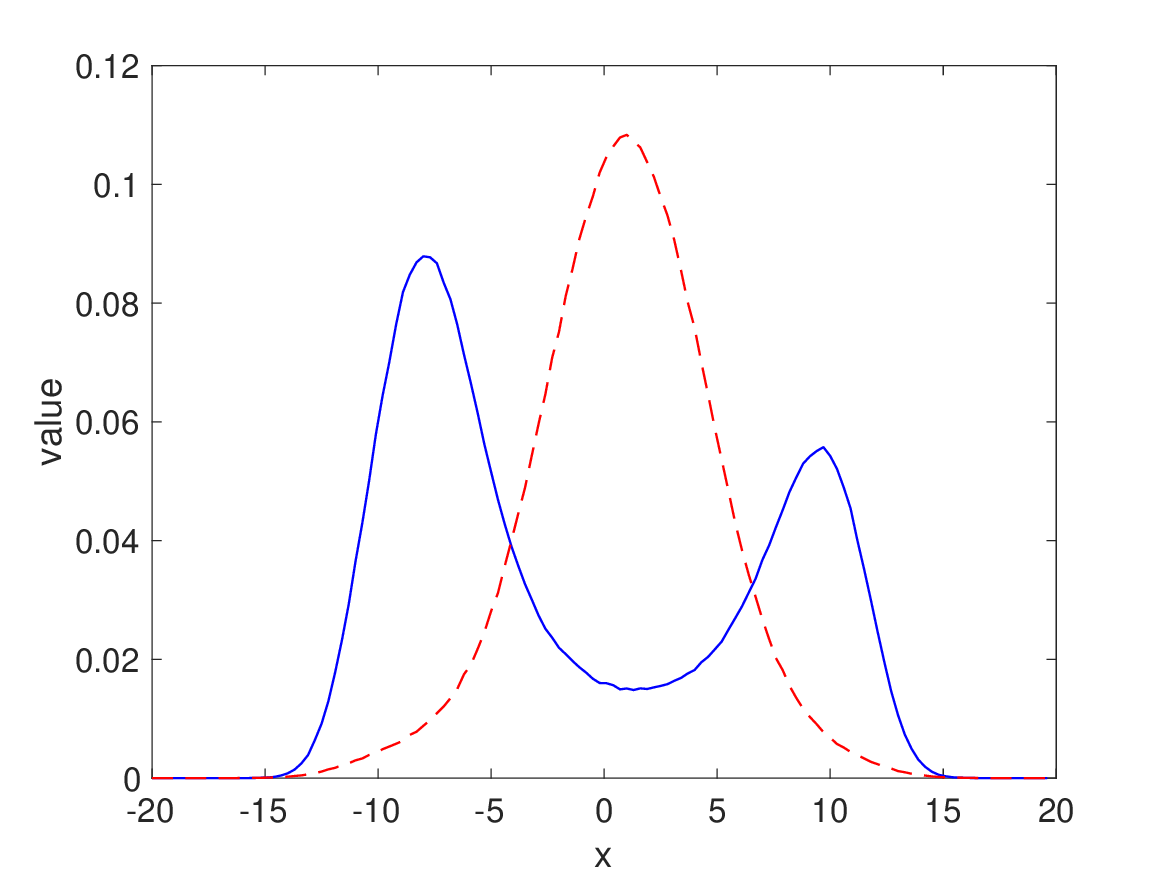}
\includegraphics[width=80mm]{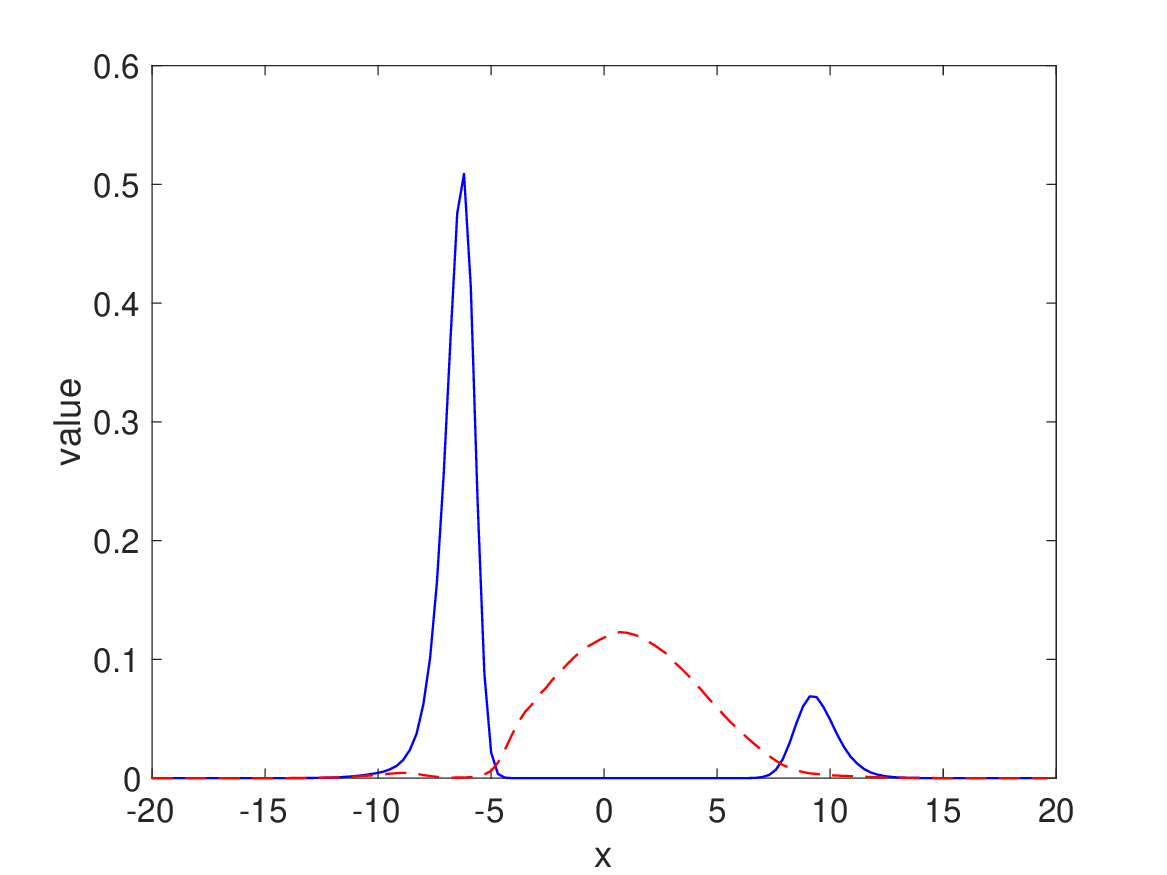}
\caption{Nonsymmetric case: 
Densities of particle system \eqref{1.pl} corresponding to the SKT population 
model (left) and particle system \eqref{1.cdj2} (right) at time $t=2$. 
Solid blue line: species 1; Dashed red line: species 2.}
\label{fig.nsymm}
\end{figure}

\subsection{Two species: symmetric case}\label{symm_2species}

We investigate the symmetric case by choosing $a_{11}=a_{22}=0$,
$a_{12}=a_{21}=355$, and, as before, $\sigma_1=1$, $\sigma_2=2$. 
The initial data are chosen as in the previous example.
In this example, we expect that cross-diffusion 
dominates self-diffusion. We present the approximate densities for different times
in Figure \ref{fig.symm}. In both models, the species have the tendency to
segregate. 
%Because of the symmetric diffusion matrix, we observe only one
%cluster for each species. 
As expected, the segregation in the 
particle system \eqref{1.cdj2} is stronger than in system \eqref{1.pl}
corresponding to the SKT model.

\begin{figure}[htb]
\includegraphics[width=65mm]{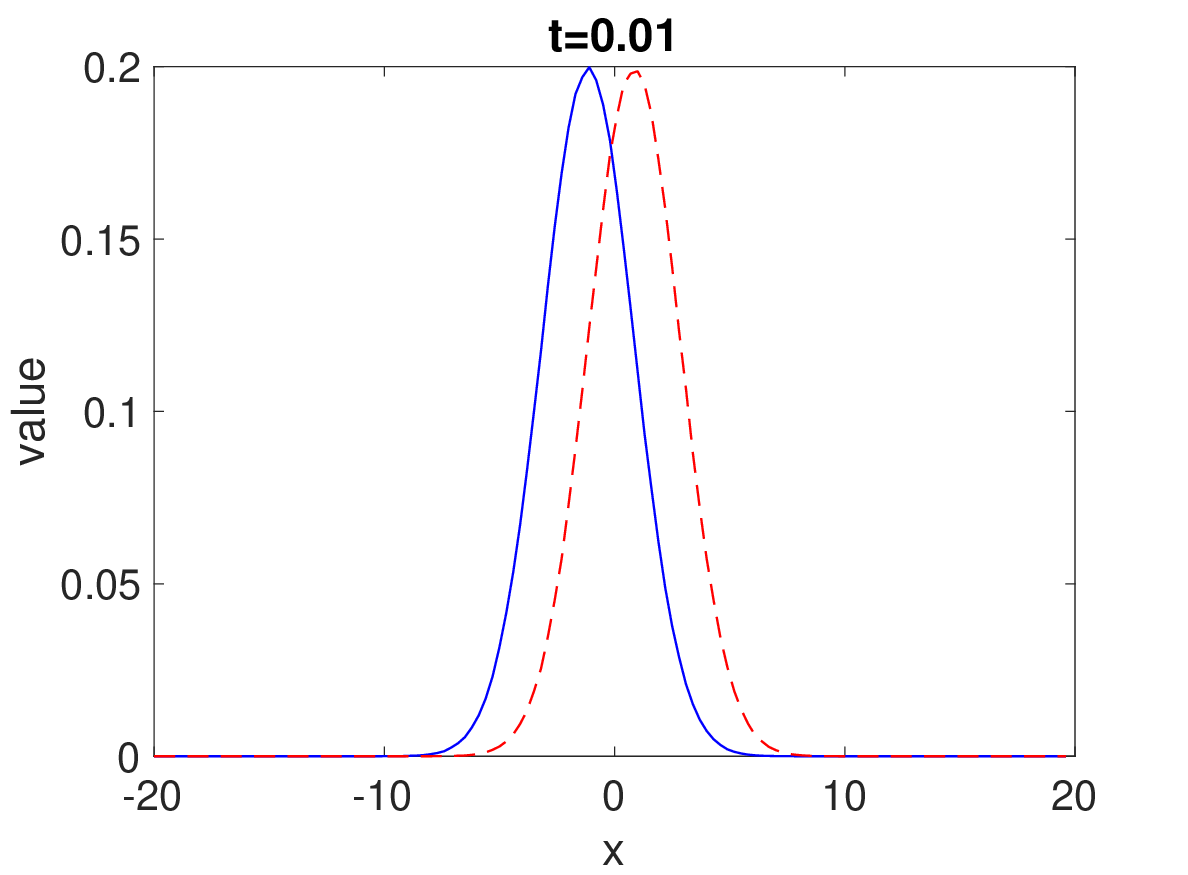} 
\includegraphics[width=65mm]{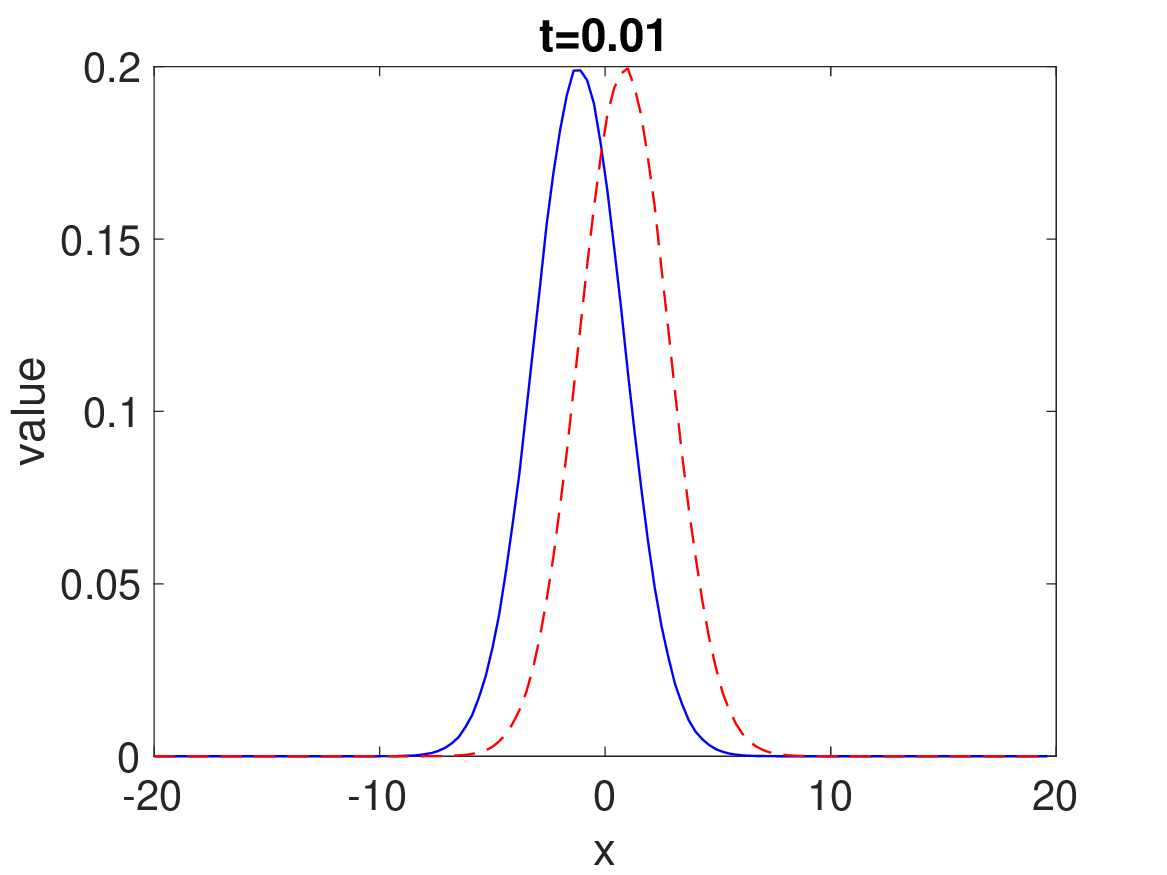} 
\includegraphics[width=65mm]{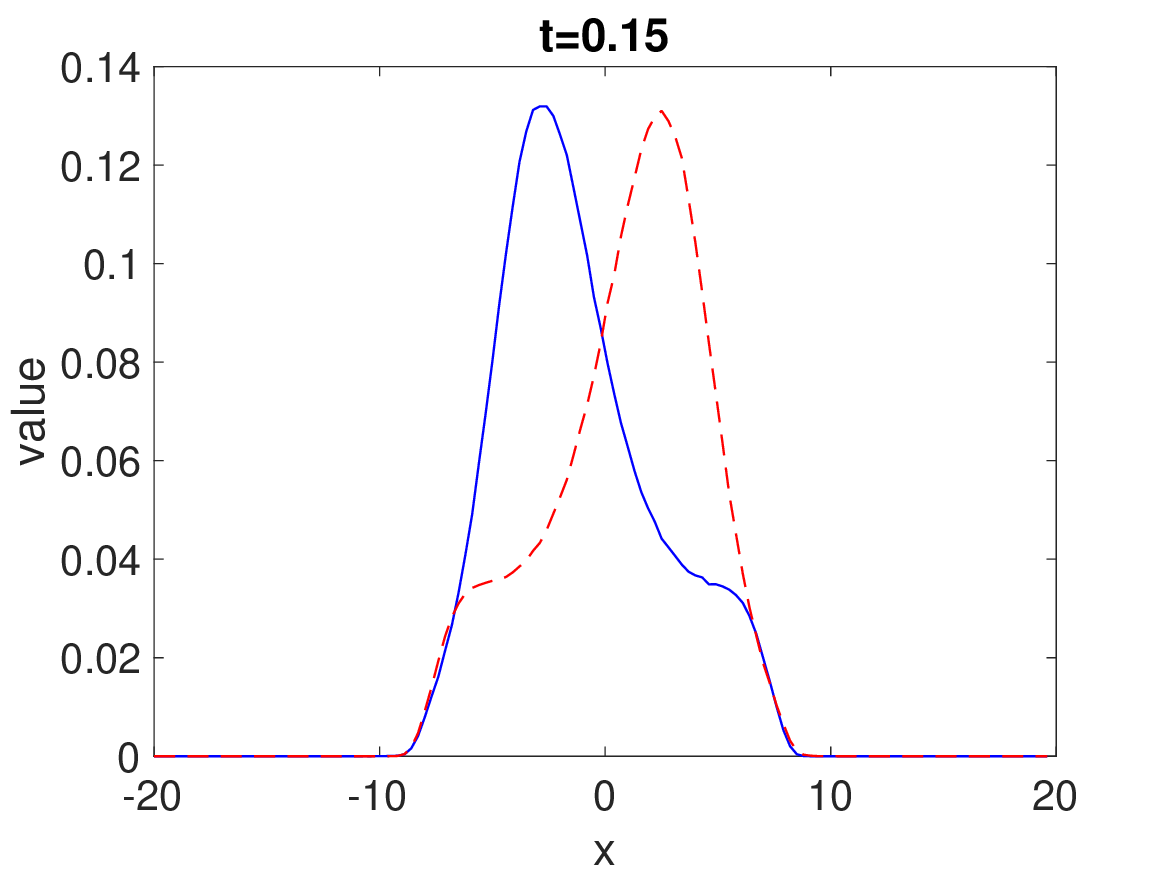} 
\includegraphics[width=65mm]{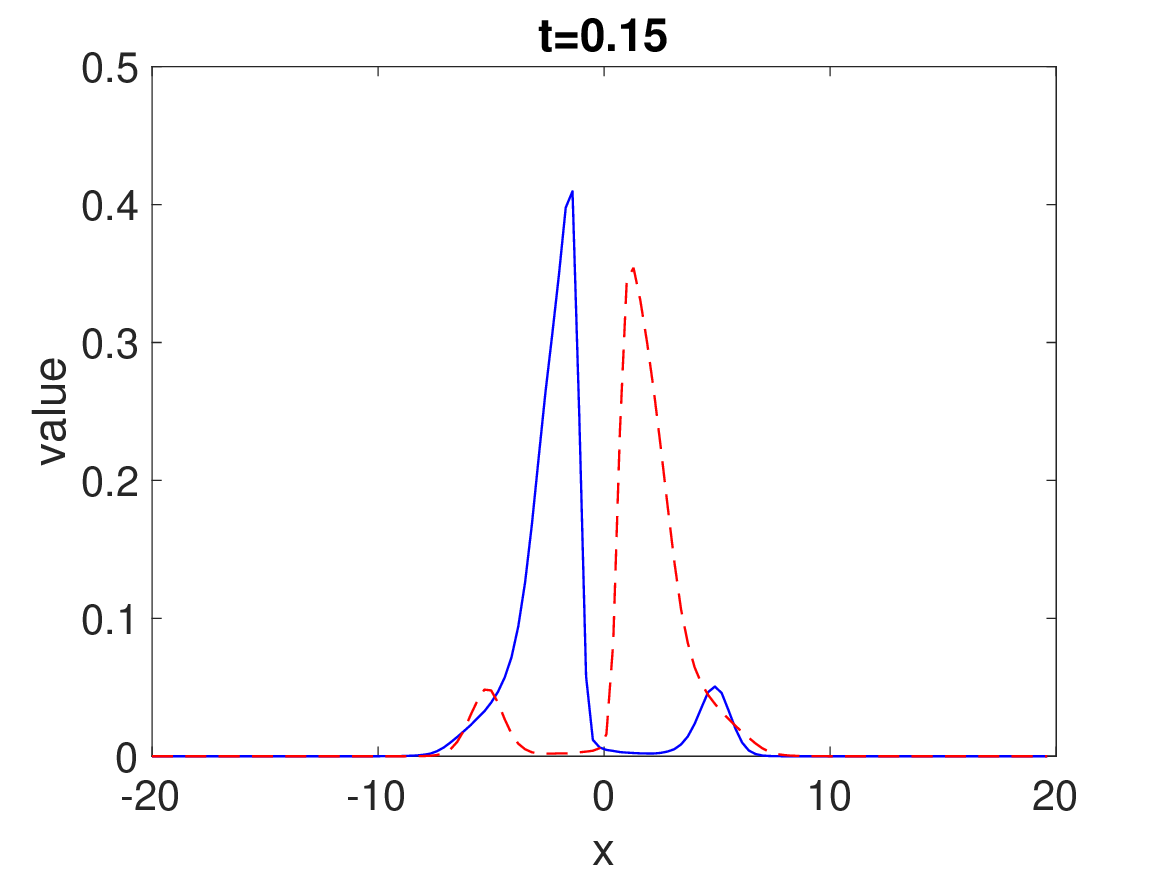} 
\includegraphics[width=65mm]{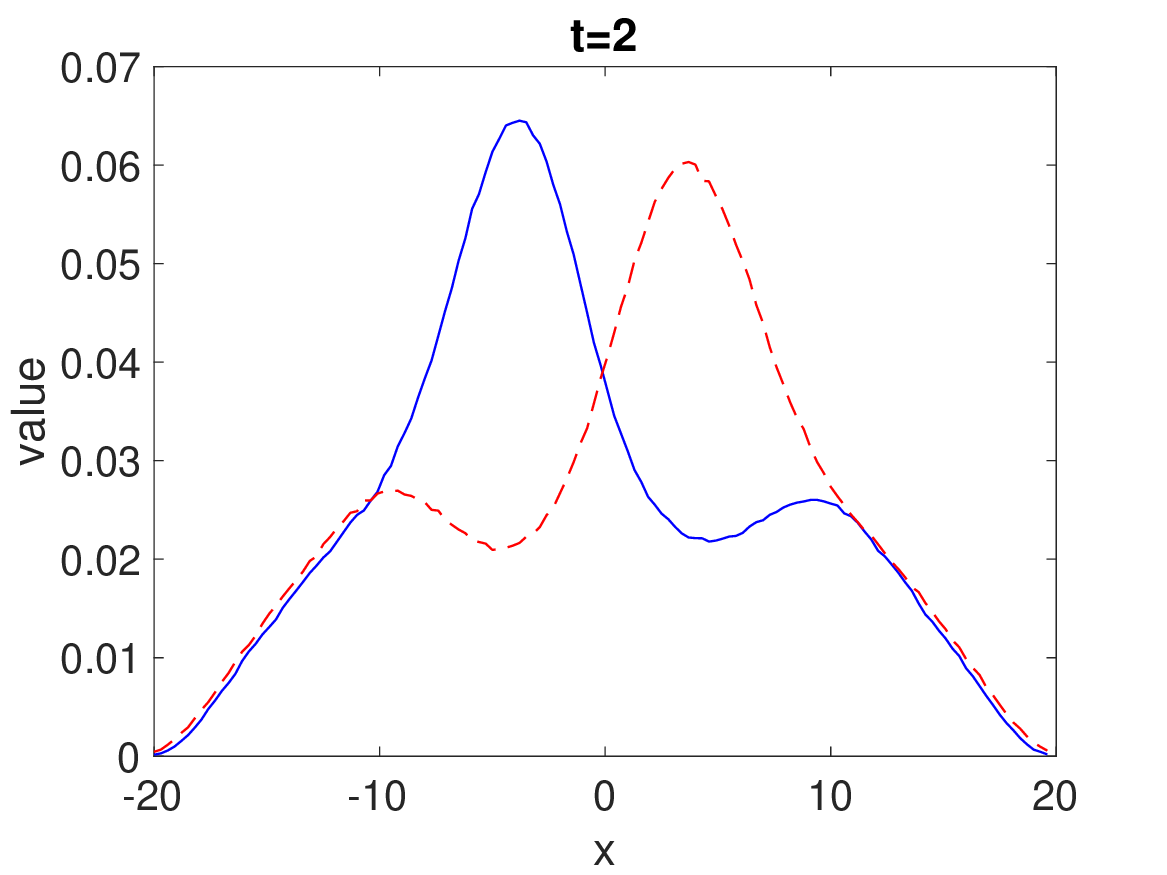}
\includegraphics[width=65mm]{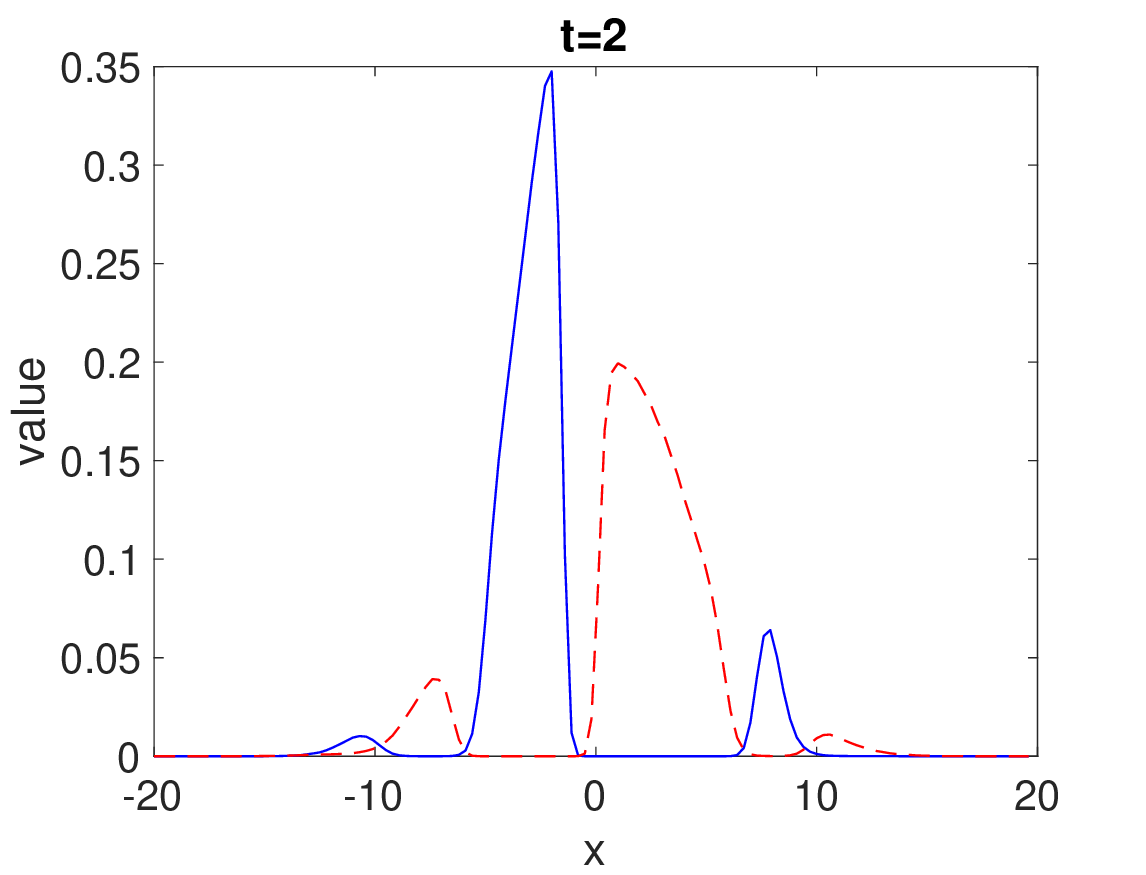}
\caption{Symmetric case: Densities of particle system \eqref{1.pl}
corresponding to the SKT population model (left) and
particle system \eqref{1.cdj2} (right) for different times $t=0.01$, $0.15$, $2$. 
Solid blue line: species 1; dashed red line: species 2.}
\label{fig.symm}
\end{figure}

\subsection{Three species}%\label{7.sec.three}

Our third numerical experiment illustrates the segregation behaviour in case of 
three interacting species with coefficients $\sigma_1=1$, $\sigma_2=2$, $\sigma_3=3$
and 
$$
  (a_{ij}) = \begin{pmatrix} 0 & 355 & 355 \\ 25 & 0 & 25 \\ 355 & 0 & 0 \end{pmatrix}.
$$
Similar as in the two-species case, the initial data are overlapping normal 
distributions with means $-1$, 2, and $-3$, respectively, and variance 2.
The approximate densities at $t=2$ are shown 
in Figure \ref{fig.3species}. We observe that the approximate densities of particle 
model \eqref{1.cdj2} show a much clearer component-wise segregation behavior than 
the stochastic particle model \eqref{1.pl}, which corresponds to the SKT system,
where the diffusion effects are much stronger. This may be explained by the fact
that, on the PDE level, the gradient-flow structure of model \eqref{1.cdj} can
be written species-wise, whereas the SKT model \eqref{1.skt} (with $f(x)=x$)
only posseses a vector-valued gradient-flow structure.

\begin{figure}[htb]
\includegraphics[width=80mm]{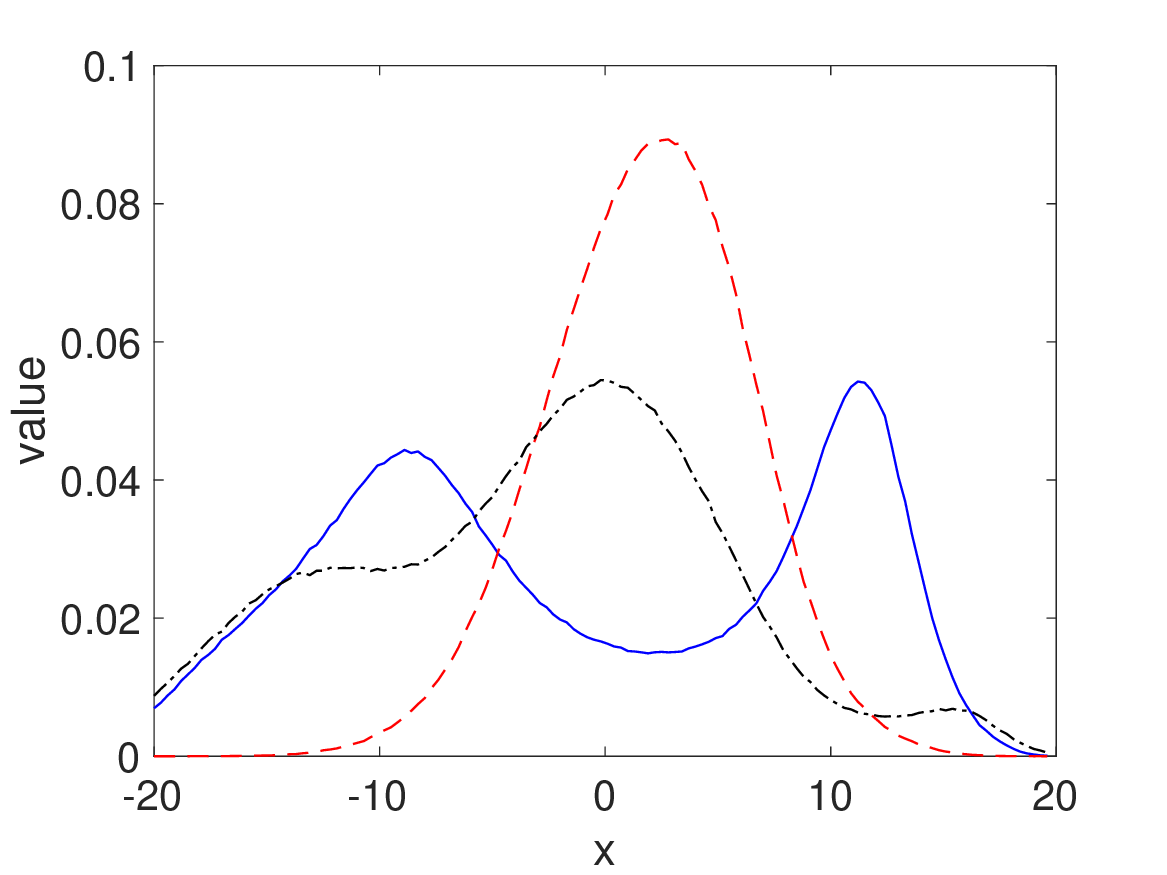}
\includegraphics[width=80mm]{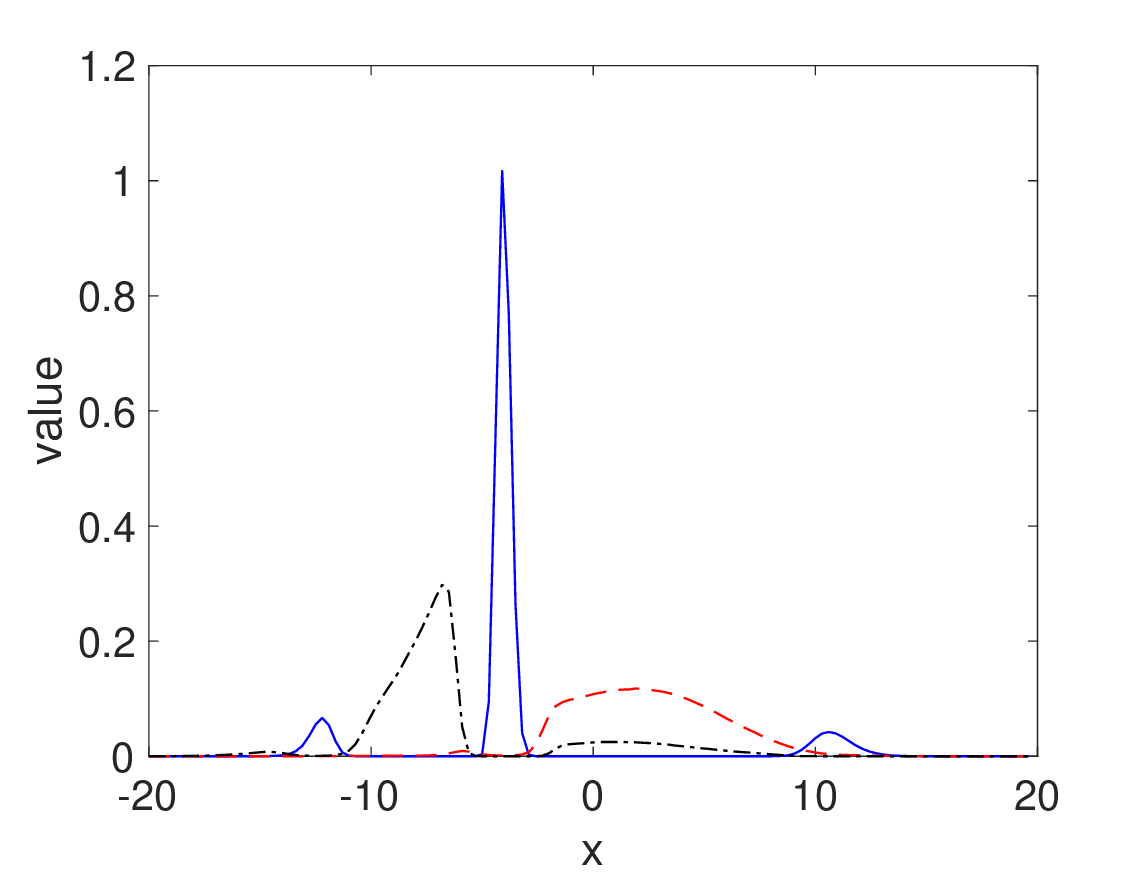}
\caption{Three-species case:
Densities of particle system \eqref{1.pl} corresponding to the SKT population model 
(left) and particle system \eqref{1.cdj2} (right) at time $t=2$. 
Solid blue line: species 1; dashed red line: species 2; dash-dotted black line: 
species 3.}
\label{fig.3species}
\end{figure}

\subsection{Cubic nonlinearity}\label{cubic}

\red{For our last experiment, we compare the numerical results for the cubic
nonlinearity $f(s)=s^3$ with the linear case imposed in the previous examples.
The parameters are the same as in Section \ref{symm_2species}. The numerical
simulations are performed without using approximating functions $f_{\eta}$. 
This may be justified by the fact that the simulations deal with
(relatively) small time scales and with compactly supported initial data.
We observe in Figure \ref{fig.cubic} that the cubic nonlinearity causes more 
clustering than the linear case $f(s)=s$. The simulations suggests that in the cubic 
case, diffusion happens on a faster time scale than segregation, while in the
linear case, the particles diffuse slower and hence they form bigger but fewer 
clusters.}      

\begin{figure}[htb]
\includegraphics[width=80mm]{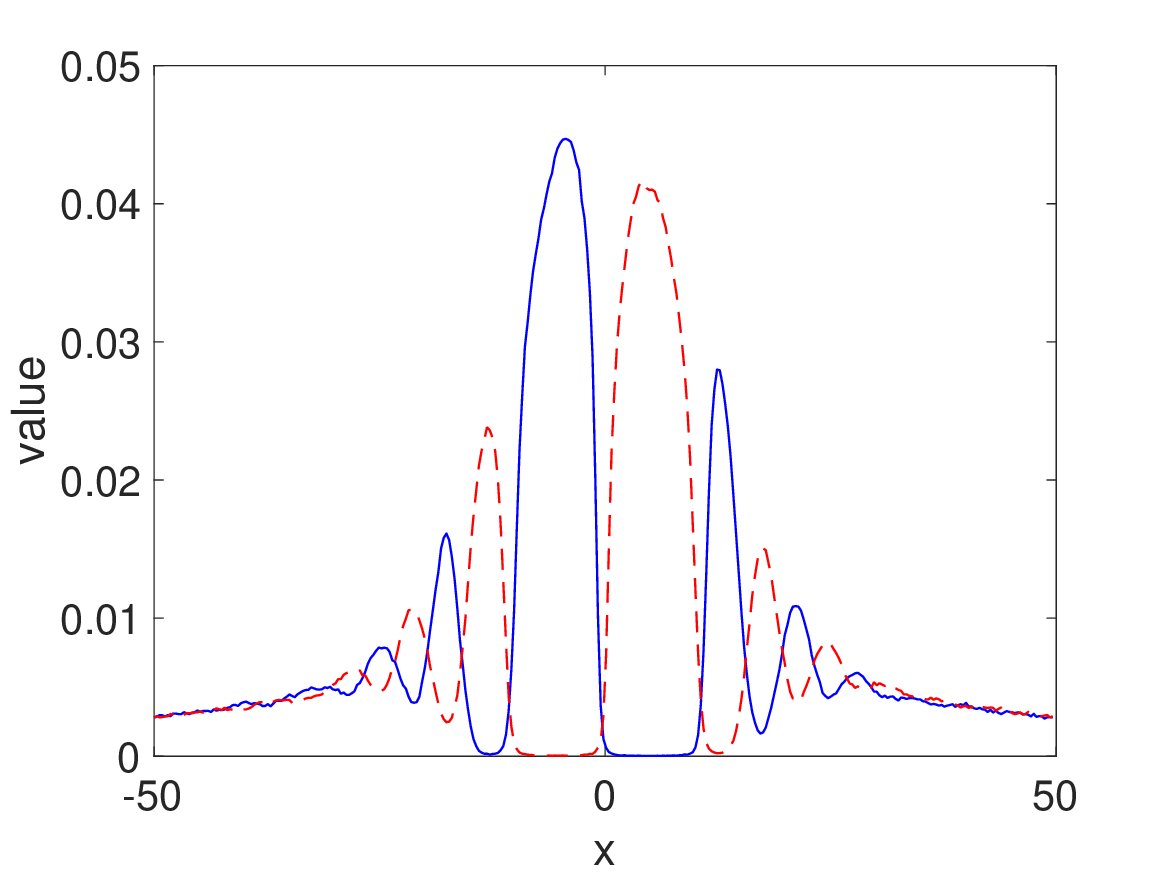}
\includegraphics[width=80mm]{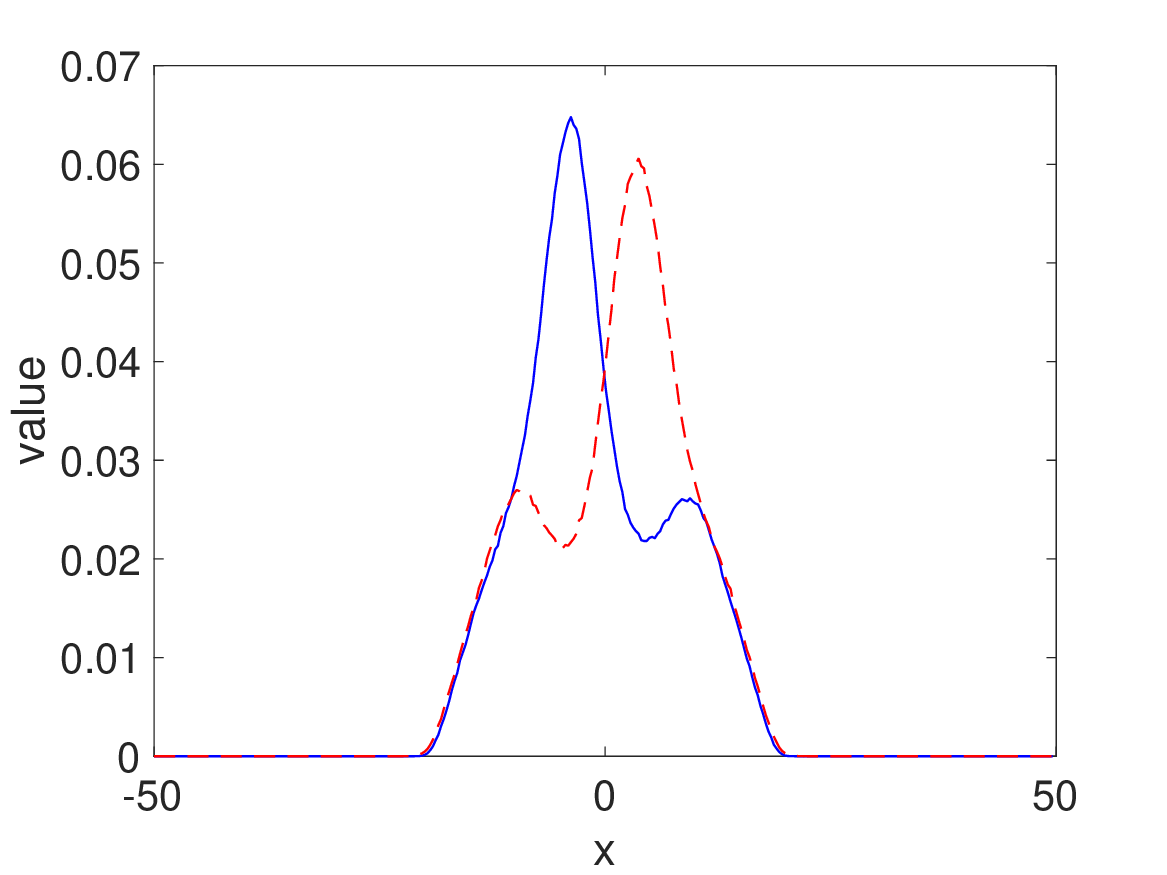}
\caption{Densities of particle system \eqref{1.pl} corresponding to the SKT 
population model with $f(s)=s^3$ (left) and $f(s)=s$ (right) at time $t=2$. 
Solid blue line: species 1; dashed red line: species 2.
The right figure is the same as in Figure \ref{fig.symm} (bottom right) 
but with the range $x=-50,\ldots,50$.}
\label{fig.cubic}
\end{figure}

%%%%%%%%%%%%%%%%%%%%%%%%%%%%%%%%%%%%%%%%%%%%%%%%%%%%%%%%%%%%%%%%%%%%%%%%%%

\begin{appendix}
\section{Auxiliary results}

For the convenience of the reader, we recall some well-known estimates 
used in this paper. 

\begin{lemma}[Young's convolution inequality, {\cite[Formula (7), page 107]{LL01}}]
\label{lem.convul}
Let $1\le p,q,r$ $\le\infty$ be such that $1/p+1/q=1+1/r$ and let $f\in L^p(\R^d)$,
$g\in L^q(\R^d)$. Then $f*g\in L^r(\R^d)$ and
$$
  \|f*g\|_{L^r(\R^d)}\le \|f\|_{L^p(\R^d)}\|g\|_{L^q(\R^d)}.
$$
\end{lemma}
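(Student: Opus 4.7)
The plan is to prove Young's convolution inequality via the three-function Hölder inequality. First I would reduce to nonnegative $f,g$ (since $|f*g|\le |f|*|g|$ pointwise) and dispose of the extremal cases separately: $p=1$ (or by symmetry $q=1$, so $r=q$) follows from Minkowski's integral inequality, and $r=\infty$ (hence $1/p+1/q=1$) is the standard Hölder inequality. It remains to treat $1<p,q,r<\infty$ with $p<r$ and $q<r$.

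The crux of the argument is the decomposition
\[
f(x-y)g(y) \;=\; \bigl(f(x-y)^{p}g(y)^{q}\bigr)^{1/r}\cdot f(x-y)^{1-p/r}\cdot g(y)^{1-q/r},
\]
combined with the three-function Hölder inequality in the variable $y$ with exponents $r$, $\alpha := pr/(r-p)$, and $\beta := qr/(r-q)$. The scaling hypothesis $1/p+1/q=1+1/r$ rewrites exactly as $1/r + 1/\alpha + 1/\beta = 1$, so these exponents are admissible. Note that the exponents of $f(x-y)$ and $g(y)$ in the last two factors were chosen so that $(1-p/r)\alpha = p$ and $(1-q/r)\beta = q$, i.e.\ translation invariance of Lebesgue measure will evaluate their $L^\alpha_y$ and $L^\beta_y$ norms to $\|f\|_p^{p/\alpha}$ and $\|g\|_q^{q/\beta}$, both independent of $x$.

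Applying Hölder pointwise in $x$ therefore yields
\[
(f*g)(x) \;\le\; \Bigl(\int_{\R^d} f(x-y)^{p}g(y)^{q}\,\d y\Bigr)^{1/r} \|f\|_{L^p(\R^d)}^{p/\alpha}\,\|g\|_{L^q(\R^d)}^{q/\beta}.
\]
Raising to the $r$-th power, integrating in $x$, and invoking Fubini turns the remaining double integral $\int\int f(x-y)^p g(y)^q\,\d y\,\d x$ into $\|f\|_p^p\|g\|_q^q$ by translation invariance. The elementary identities $rp/\alpha = r-p$ and $rq/\beta = r-q$ then collapse the right-hand side to $\|f\|_p^r\|g\|_q^r$, and extracting the $r$-th root gives the claim.

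I do not anticipate any real obstacle: once the three-exponent decomposition is identified, the verification is purely algebraic. The only technical point is justifying Fubini and Hölder in full generality; I would first establish the inequality for simple nonnegative functions with compact support, where all integrals are manifestly finite, and then pass to the general $L^p\times L^q$ case by monotone convergence. The a priori bound so obtained simultaneously proves that $f*g$ is defined almost everywhere and belongs to $L^r(\R^d)$.
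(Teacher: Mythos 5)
Your proof is correct. Note that the paper never proves this lemma: it is recalled in the appendix and quoted directly from Lieb and Loss \cite{LL01}, so there is no in-paper argument to compare against, and a self-contained proof like yours is a legitimate alternative. Your route is the classical H\"older trisection: write $f(x-y)g(y)=\bigl(f(x-y)^p g(y)^q\bigr)^{1/r}\,f(x-y)^{1-p/r}\,g(y)^{1-q/r}$ and apply the three-exponent H\"older inequality in $y$ with exponents $r$, $\alpha=pr/(r-p)$, $\beta=qr/(r-q)$, which are admissible exactly because $1/p+1/q=1+1/r$; the bookkeeping $(1-p/r)\alpha=p$, $(1-q/r)\beta=q$, $rp/\alpha=r-p$, $rq/\beta=r-q$ all checks out, the extremal cases $p=1$, $q=1$, $r=\infty$ are correctly delegated to Minkowski's integral inequality and ordinary H\"older, and the reduction to $1<p,q<r<\infty$ is justified since $p,q>1$ forces $p<r$ and $q<r$ under the scaling relation. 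The only simplification available is that your final approximation step is dispensable: after reducing to nonnegative $f,g$, Tonelli's theorem applies directly to the double integral, so no preliminary passage through compactly supported simple functions is needed, and the resulting a priori bound indeed shows $f*g$ is finite a.e.\ and lies in $L^r(\R^d)$. What the paper's citation buys instead is the sharp form of the inequality: in \cite{LL01} the result is established with the optimal constant (strictly less than $1$ for the non-extremal exponents) via rearrangement methods, whereas your argument yields the constant-$1$ version stated in the lemma, which is all that is used in the paper.
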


\begin{lemma}[Moser-type estimate I, {\cite[Prop.~2.1(A)]{Maj84}}]\label{lem.moser}
Let $s\in\N$ and $\alpha\in\N_0^n$ with $|\alpha|=s$. Then there exists
a constant $C>0$ such that for all $f$, $g\in H^s(\R^d)\cap L^\infty(\R^d)$,
$$
  \|D^\alpha(fg)\|_{L^2(\R^d)} \le C\big(\|f\|_{L^\infty(\R^d)}\|D^s g\|_{L^2(\R^d)}
	+ \|D^s f\|_{L^2(\R^d)}\|g\|_{L^\infty(\R^d)}\big).
$$
\end{lemma}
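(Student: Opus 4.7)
The plan is to derive the Moser-type estimate from four classical ingredients applied in order: the multinomial Leibniz rule, Hölder's inequality, the Gagliardo--Nirenberg interpolation inequality, and Young's inequality. First I would expand $D^\alpha(fg)=\sum_{\beta\le\alpha}\binom{\alpha}{\beta}D^\beta f\,D^{\alpha-\beta}g$ and pass to $L^2(\R^d)$ via the triangle inequality, reducing matters to bounding each summand $\|D^\beta f\,D^{\alpha-\beta}g\|_{L^2(\R^d)}$ for $|\beta|=k$ with $0\le k\le s$. The extremal values $k=0$ and $k=s$ already produce the two terms on the right-hand side after a single Hölder step in $L^\infty\times L^2$, so the remaining task is to control the mixed-derivative contributions $1\le k\le s-1$ in the same form.

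For such a mixed term I would use Hölder with the conjugate exponents $p=2s/k$ and $q=2s/(s-k)$, which are both finite and satisfy $1/p+1/q=1/2$, giving
\[
\|D^\beta f\,D^{\alpha-\beta}g\|_{L^2(\R^d)}\le\|D^\beta f\|_{L^{2s/k}(\R^d)}\,\|D^{\alpha-\beta}g\|_{L^{2s/(s-k)}(\R^d)}.
\]
Each factor is then interpolated between $L^\infty$ of the function and $L^2$ of its top-order derivative via the Gagliardo--Nirenberg inequality with parameter $a=k/s$ for $f$ and $a=(s-k)/s$ for $g$:
\[
\|D^\beta f\|_{L^{2s/k}}\le C\|f\|_{L^\infty}^{1-k/s}\|D^s f\|_{L^2}^{k/s},\qquad
\|D^{\alpha-\beta}g\|_{L^{2s/(s-k)}}\le C\|g\|_{L^\infty}^{k/s}\|D^s g\|_{L^2}^{1-k/s}.
\]
Multiplying yields the bound $C\bigl(\|f\|_{L^\infty}\|D^s g\|_{L^2}\bigr)^{1-k/s}\bigl(\|D^s f\|_{L^2}\|g\|_{L^\infty}\bigr)^{k/s}$.

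Finally, Young's inequality $A^{\theta}B^{1-\theta}\le\theta A+(1-\theta)B$ with $\theta=1-k/s\in(0,1)$ dissolves this product into a convex combination of $\|f\|_{L^\infty}\|D^s g\|_{L^2}$ and $\|D^s f\|_{L^2}\|g\|_{L^\infty}$. Summing over the finitely many multi-indices $\beta\le\alpha$ delivers the stated bound with a combinatorial constant depending only on $s$ and $d$. The only step requiring real attention is verifying that the chosen Hölder pair $(2s/k,\,2s/(s-k))$ is compatible with the Gagliardo--Nirenberg scaling identity $1/p=k/d+a(1/2-s/d)+(1-a)/\infty$, so that the interpolation parameter indeed comes out to $a=k/s$; this is a short direct computation, but it is the sole place where the dimension $d$ and the orders of differentiation must be tracked simultaneously.
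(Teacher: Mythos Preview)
The paper does not give its own proof of this lemma; it is merely recalled in the appendix with a citation to Majda's monograph. Your argument---Leibniz expansion, H\"older with the conjugate pair $(2s/k,\,2s/(s-k))$, Gagliardo--Nirenberg interpolation between $L^\infty$ and $\|D^s\cdot\|_{L^2}$, then Young---is precisely the standard proof of this estimate (and is in fact the argument given in the cited reference), and it is correct. One minor remark on your closing comment: when you carry out the scaling check, the dimension $d$ cancels completely, so the interpolation parameter $a=k/s$ falls out independently of $d$; the computation is therefore even shorter than you suggest, and no genuine dimensional bookkeeping is required.
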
 

\begin{lemma}[Moser-type estimate II, {\cite[Prop.~2.1(C)]{Maj84}}]\label{lem.moser2}
Let $s\in\N$ and $\alpha\in\N_0^n$ with $|\alpha|=s$. Then there exists
a constant $C>0$ such that for smooth $g:\R\to\R$ and 
$u\in H^s(\R^d)\cap L^\infty(\R^d)$,
$$
  \|D^\alpha g(u)\|_{L^2(\R^d)} \le C\|g'\|_{C^{s-1}(\R)}\|u\|_{L^\infty(\R^d)}^{s-1}
	\|D^\alpha u\|_{L^2(\R^d)}.
$$
\end{lemma}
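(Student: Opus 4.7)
The plan is to prove the estimate via the Faà di Bruno (multivariate chain rule) formula combined with Hölder's inequality and Gagliardo--Nirenberg interpolation. First I would write out
$$
  D^\alpha g(u) = \sum_{k=1}^{s}\, g^{(k)}(u)\sum_{(\beta_1,\ldots,\beta_k)\in P_k(\alpha)} c_{\beta_1,\ldots,\beta_k}\, D^{\beta_1}u\,\cdots\,D^{\beta_k}u,
$$
where $P_k(\alpha)$ denotes the (finite) family of ordered multi-index tuples with $|\beta_j|\ge 1$ and $\sum_{j=1}^k \beta_j=\alpha$, and $c_{\beta_1,\ldots,\beta_k}$ are universal combinatorial constants. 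Since $\sum_{j=1}^k|\beta_j|=s$ and each $|\beta_j|\ge 1$, one has $1\le k\le s$. Taking absolute values and applying the triangle inequality reduces matters to estimating each product $\|D^{\beta_1}u\,\cdots\,D^{\beta_k}u\|_{L^2(\R^d)}$ in terms of $\|u\|_{L^\infty(\R^d)}$ and $\|D^\alpha u\|_{L^2(\R^d)}$.

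The pointwise factor $g^{(k)}(u(x))$ is bounded uniformly by $\|g^{(k)}\|_{L^\infty(\R)}\le \|g'\|_{C^{s-1}(\R)}$ for $k=1,\ldots,s$, which gives the scalar prefactor. To handle the product of derivatives, I would apply Hölder's inequality with exponents $p_j=2s/|\beta_j|$, which satisfy $\sum_{j=1}^k 1/p_j = \sum_{j=1}^k |\beta_j|/(2s) = 1/2$, yielding
$$
  \big\|D^{\beta_1}u\,\cdots\,D^{\beta_k}u\big\|_{L^2(\R^d)} \le \prod_{j=1}^k \|D^{\beta_j}u\|_{L^{p_j}(\R^d)}.
$$
Then I would invoke the Gagliardo--Nirenberg interpolation inequality
$$
  \|D^{\beta_j}u\|_{L^{p_j}(\R^d)} \le C\,\|D^s u\|_{L^2(\R^d)}^{|\beta_j|/s}\,\|u\|_{L^\infty(\R^d)}^{1-|\beta_j|/s},
$$
which holds precisely because the scaling relation $1/p_j = |\beta_j|/d + (|\beta_j|/s)(1/2 - s/d)$ reduces to $1/p_j=|\beta_j|/(2s)$.

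Multiplying these bounds over $j=1,\ldots,k$ and using $\sum_j|\beta_j|=s$, the exponents telescope into
$$
  \prod_{j=1}^k \|D^{\beta_j}u\|_{L^{p_j}(\R^d)} \le C\,\|D^s u\|_{L^2(\R^d)}^{1}\,\|u\|_{L^\infty(\R^d)}^{k-1},
$$
and since $k-1\le s-1$ with $\|u\|_{L^\infty(\R^d)}$ allowed to be arbitrary (absorbed into the constant together with the finitely many $c_{\beta_1,\ldots,\beta_k}$), summing over all partitions and over $k=1,\ldots,s$ and replacing $\|D^s u\|_{L^2(\R^d)}$ by the larger $\|D^\alpha u\|_{L^2(\R^d)}$ summed over multi-indices of order $s$ (or, equivalently, noting $\|D^s u\|_{L^2}\lesssim \|D^\alpha u\|_{L^2}$ after relabelling the top-order term) yields the claim. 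The main technical point is verifying the Gagliardo--Nirenberg scaling and checking that the Hölder exponents indeed sum to $1/2$; the rest is bookkeeping via Faà di Bruno. The boundary cases $k=1$ (single top-order derivative) and $k=s$ (all first-order factors, handled by $L^{\infty}$ bounds on lower derivatives via embedding or direct GN) require a brief separate check but follow the same template.
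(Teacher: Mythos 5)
The paper gives no proof of this lemma---it is quoted verbatim from \cite{Maj84}, Prop.~2.1(C)---and your route (Fa\`a di Bruno, H\"older with exponents $p_j=2s/|\beta_j|$, and the Gagliardo--Nirenberg interpolation $\|D^{\beta_j}u\|_{L^{p_j}(\R^d)}\le C\|u\|_{L^\infty(\R^d)}^{1-|\beta_j|/s}\|D^su\|_{L^2(\R^d)}^{|\beta_j|/s}$) is exactly the standard argument behind that citation; the exponent bookkeeping, the bound $\|g^{(k)}\|_{L^\infty(\R)}\le\|g'\|_{C^{s-1}(\R)}$ for $k\le s$, and the scaling check for Gagliardo--Nirenberg are all correct, and the cases $k=1$ and $k=s$ need no separate treatment since $\theta=|\beta_j|/s$ stays in the admissible range.

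Two points in your final step are not correct as written, though. First, you cannot ``absorb'' $\|u\|_{L^\infty(\R^d)}$ into the constant: the constant must be independent of $u$, and a term with $k<s$ factors carries $\|u\|_{L^\infty(\R^d)}^{k-1}$, which is bounded by $\|u\|_{L^\infty(\R^d)}^{s-1}$ only when $\|u\|_{L^\infty(\R^d)}\ge 1$. What your argument actually yields is the bound with $\max\big(1,\|u\|_{L^\infty(\R^d)}\big)^{s-1}$ (equivalently, a sum of powers $\|u\|_{L^\infty}^{k-1}$, $1\le k\le s$); indeed the literal inequality with $\|u\|_{L^\infty}^{s-1}$ cannot hold for all $u$ when $s\ge2$ (take $g(v)=v$ and $u$ of small amplitude: already the $k=1$ term $g'(u)D^\alpha u$ violates it), so the statement must be read in this standard form, which is also the form in which it is harmlessly applied in Section 3, where the relevant $L^\infty$ norms are controlled by fixed quantities. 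Second, the parenthetical claim that $\|D^su\|_{L^2(\R^d)}$ is controlled by $\|D^\alpha u\|_{L^2(\R^d)}$ for a single multi-index $\alpha$ with $|\alpha|=s$ is false (for $\alpha=(s,0,\ldots,0)$ the norm of $\pa_1^s u$ does not control mixed derivatives); simply keep $\|D^su\|_{L^2(\R^d)}$ on the right-hand side, with $D^s$ denoting the collection of all derivatives of order $s$ as in the notation following Lemma \ref{lem.comm}, which is again how the estimate is actually invoked in the paper.
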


\begin{lemma}[Moser-type commutator inequality, {\cite[Prop.~2.1(B)]{Maj84}}]
\label{lem.comm}
Let $s\in\N$ and $\alpha\in\N_0^n$ with $|\alpha|=s$. Then
there exists $C>0$ such that for all $f\in H^s(\R^d)\cap W^{1,\infty}(\R^d)$
and $g\in H^{s-1}(\R^d)\cap L^\infty(\R^d)$,
$$
  \|D^\alpha(fg)-fD^\alpha(g)\|_{L^2(\R^d)} \le C\big(\|Df\|_{L^\infty(\R^d)}
	\|D^{s-1}g\|_{L^2(\R^d)} + \|D^s f\|_{L^2(\R^d)}\|g\|_{L^\infty(\R^d)}\big),
$$
where $D^s=\sum_{|\alpha|=s}D^\alpha$.
\end{lemma}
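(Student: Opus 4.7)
The result is a classical Moser-type commutator estimate, so the natural route is: expand the commutator via the Leibniz rule, bound each resulting product by H\"older's inequality at carefully chosen Lebesgue exponents, apply Gagliardo--Nirenberg interpolation to control the intermediate-order derivatives, and finally use Young's inequality to reduce to the two terms on the right-hand side.

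First, I would write
$$
D^\alpha(fg)-fD^\alpha g \;=\; \sum_{0<\beta\le\alpha}\binom{\alpha}{\beta}D^\beta f\,D^{\alpha-\beta}g,
$$
so that in every surviving term $|\beta|\ge 1$ and $|\alpha-\beta|\le s-1$. This algebraic asymmetry is exactly what explains the appearance of $\|Df\|_{L^\infty}$ rather than $\|f\|_{L^\infty}$, and $\|D^{s-1}g\|_{L^2}$ rather than $\|D^s g\|_{L^2}$, on the right-hand side.

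For each $\beta$ with $1\le|\beta|\le s$ I would apply H\"older with the conjugate exponents
$$
p_\beta=\frac{2(s-1)}{|\beta|-1},\qquad q_\beta=\frac{2(s-1)}{s-|\beta|},\qquad\frac{1}{p_\beta}+\frac{1}{q_\beta}=\frac{1}{2},
$$
with the conventions $p_\beta=\infty$ if $|\beta|=1$ and $q_\beta=\infty$ if $|\beta|=s$. In the two boundary cases the relevant factor is bounded directly, yielding the prototype contributions $\|Df\|_{L^\infty}\|D^{s-1}g\|_{L^2}$ and $\|D^s f\|_{L^2}\|g\|_{L^\infty}$. For intermediate $|\beta|$, the Gagliardo--Nirenberg inequality, applied once to $Df$ and once to $g$, yields with $\theta=(|\beta|-1)/(s-1)\in(0,1)$ the estimates
$$
\|D^\beta f\|_{L^{p_\beta}}\le C\|Df\|_{L^\infty}^{1-\theta}\|D^s f\|_{L^2}^{\theta},\qquad
\|D^{\alpha-\beta}g\|_{L^{q_\beta}}\le C\|g\|_{L^\infty}^{\theta}\|D^{s-1}g\|_{L^2}^{1-\theta}.
$$
Multiplying and regrouping these gives the product $[\|D^s f\|_{L^2}\|g\|_{L^\infty}]^{\theta}[\|Df\|_{L^\infty}\|D^{s-1}g\|_{L^2}]^{1-\theta}$, and Young's inequality with exponents $1/\theta$ and $1/(1-\theta)$ collapses it into $\|D^s f\|_{L^2}\|g\|_{L^\infty}+\|Df\|_{L^\infty}\|D^{s-1}g\|_{L^2}$ with a constant independent of $\beta$. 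Summing the $\binom{\alpha}{\beta}$-weighted contributions over $0<\beta\le\alpha$ then yields the claim with $C=C(s,d)$.

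The one place that requires care is checking the Gagliardo--Nirenberg scaling compatibility for the triple $(p_\beta,q_\beta,\theta)$ — a short dimensional computation that in fact forces the precise values of $p_\beta$ and $q_\beta$ written above and is the reason everything fits together so cleanly. Beyond that, the argument is combinatorial bookkeeping, which is why Majda \cite{Maj84} states the result with only the barest of sketches.
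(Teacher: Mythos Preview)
Your sketch is correct and is precisely the standard argument: Leibniz expansion to strip off the $\beta=0$ term, H\"older at the exponents $p_\beta,q_\beta$ you wrote, Gagliardo--Nirenberg interpolation of $D^\beta f$ between $Df\in L^\infty$ and $D^s f\in L^2$ (and likewise for $g$), and Young's inequality to collapse the mixed products. The scaling check you flag is indeed the only point requiring any computation, and you have the exponents right.

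There is nothing to compare against in the paper itself: Lemma~\ref{lem.comm} is stated in the appendix as a recalled auxiliary result, with a bare citation to \cite[Prop.~2.1(B)]{Maj84} and no proof. What you have written is essentially the proof one finds in Majda's monograph (or in the many later expositions of the Kato--Ponce/Moser calculus), so your approach and the cited source coincide.
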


\end{appendix}

%%%%%%%%%%%%%%%%%%%%%%%%%%%%%%%%%%%%%%%%%%%%%%%%%%%%%%%%%%%%%%%%%%%%%%%%%%

\end{document}